\theoremstyle{plain}
\newtheorem{thm}{Theorem}[section]
\newtheorem{lem}[thm]{Lemma}
\newtheorem{cor}[thm]{Corollary}
\newtheorem{prop}[thm]{Proposition}
\theoremstyle{definition}
\newtheorem{defin}[thm]{Definition}
\theoremstyle{remark}
\newtheorem*{rem}{Remark}
\newtheorem{example}[thm]{Example}
\newcommand \R{\mathbb{R}}
\newcommand \Z{\mathbb{Z}}
\newcommand \HH{\mathbb{H}}
\newcommand \minrep{\mathrm{minrep}}
\newcommand \pmax{\mathrm{pmax}}
\newcommand \nmax{\mathrm{nmax}}
\newcommand \ehr{\mathrm{ehr}}
\newcommand \conv{\mathrm{conv}}
\newcommand \natdes{\mathrm{natdes}}
\newcommand \JH{\mathrm{JH}}
\newcommand \id{\mathrm{id}}
\newcommand \PLC[1]{\overline{#1}^\mathrm{PLC}}
\newcommand \op[1]{\mathcal{O}_#1}
\newcommand \ok[1]{\mathcal{K}_#1}
\newcommand \Hom[1]{\mathrm{Hom}(#1)}
\newcommand{\df}[1]{\textit{#1}}
\begin{document}

\title{Signed Poset Polytopes}

\author{Matthias Beck}
\address{Department of Mathematics\\
         San Francisco State University\\
         San Francisco, CA 94132\\
         U.S.A.}
\email{becksfsu@gmail.com}

\author{Max Hlavacek}
\address{Department of Mathematics \& Statistics, Pomona College, Claremont, CA 91711, U.S.A.}
\email{max.hlavacek@pomona.edu}


\begin{abstract}
Stanley introduced in 1986 the order polytope and the chain polytope for a given finite
poset. These polytopes contain much information about the poset and have given rise to
important examples in polyhedral geometry. In 1993, Reiner introduced signed posets as
natural type-B analogues of posets. We define and study signed order and chain polytopes.
Our results include convex-hull and halfspace descriptions, unimodular triangulations,
Ehrhart $h^*$-polynomials and their relations to signed permutation statistics, and a
Gorenstein characterization of signed order and chain polytopes.
\end{abstract}

\keywords{Order polytope, order cone, chain polytope, signed poset,
$h^*$-polynomial, Gorenstein polytope, $P$-partition.}


\date{7 November 2023}

\maketitle


\section{Introduction}

In the seminal paper~\cite{S86}, Stanley introduced two geometric incarnations
of a given finite partially ordered set (\emph{poset}) $\Pi$: 

\begin{defin}
The \emph{order polytope} of $\Pi$ is given by
\[\mathcal{O}(\Pi) \ := \ \left\{x \in \mathbb{R}^\Pi : \, 0 \leq x_p \leq 1 \mbox{ for all } p \in \Pi \mbox{ and } x_a \leq x_b \mbox{ when } a\leq_\Pi b \right\}.\]  
\end{defin}

\begin{defin}
The \emph{chain polytope} of $\Pi$ is given by 
\[
  \mathcal{C}(\Pi) \ := \ \left\{ x \in \mathbb{R}^\Pi : \, x_p \ge 0 \mbox{ for
all } p \in \Pi \mbox{ and } x_{c_1} + \dots + x_{c_k} \leq 1 \mbox{ for every
chain } c_1 <_\Pi \dots <_\Pi c_k \right\} .
\]
\end{defin}

There is also a natural unbounded conical analogue of an order polytope:

\begin{defin}
The \emph{order cone} of $\Pi$ is given by
\[\mathcal{K}(\Pi) \ := \ \left\{x \in \mathbb{R}^\Pi : \, 0 \leq x_p  \mbox{
for all } p \in \Pi \mbox{ and } x_a \leq x_b \mbox{ when } a\leq_\Pi b \right\}.\]  
\end{defin}

These polyhedra contain much information of the given poset, e.g., about its
filters, chains, and linear extensions.
Conversely, order polytopes (and, to a lesser extent, chain polytopes) have
given a fertile ground in polyhedral and discrete geometry as a class of
$0/1$-polytopes with many, sometimes extreme, at other times only conjectured,
properties; see, e.g., \cite[Chapter~6]{B18}.

It is a short step (which we will detail in Section~\ref{sec:defs} below) to think
about a given partial order on $[n] := \{ 1, 2, \dots, n \}$ as a certain subset
of the type-$A$ root system $A_n := \{e_i-e_j: 1\leq i<j\leq n\}$, where $e_j$ is the $j$th unit vector in $\R^n$.
Here we need the following notion of the positive linear closure of a subset of a root system:
 
 \begin{defin}
Let $\Phi$ be a root system.  For a subset $S \subset \Phi$, let $\PLC{S}$ be the set of positive linear combinations of elements in~$S$.
\end{defin}

In \cite{R93}, Reiner generalised the Coxeter description of a classical poset
to type $B$ and named these objects \emph{signed posets}.  

\begin{defin}
Let $B_n$ be the root system $\{\pm e_i: 1 \leq i \leq n\} \cup \{\pm e_i \pm e_j: 1 \leq i <j\leq n\}$.  A signed poset $P$ (on $n$ elements) is a subset $P \subset B_n$ satisfying
\begin{enumerate}
    \item $\alpha \in P$ implies $-\alpha \not\in P$;
    \item $\PLC{P} = P$.
\end{enumerate}
\end{defin}

Reiner proved several type-$B$ analogues of classical posets results, e.g., on
order ideals, permutation statistics, and $P$-partitions. While \cite{R93} has
given rise to further work in algebraic combinatorics, the analogous polyhedral
constructions seem to not have been introduced/studied. (An exception of sorts
is~\cite{S08}, and we indicate its relation to the present article below.)
Our goal is to remedy this situation
and explore the geometry of signed posets. We define order polytopes and
order cones for signed posets, modeled after the Coxeter-group descriptions of these
objects for classical posets.
 
\begin{defin}\label{def:signedgeo}
Let $P$ be a signed poset on $[n]$. Then
    \[\ok{P} \ := \ \left\{x \in \R^n: \langle\alpha, x\rangle \geq 0 \mbox{ for all } \alpha \in
P\right\}\]
and 
\[\op{P} \ := \ \left\{x \in \R^n:\langle \alpha, x\rangle \geq 0 \mbox{ for all } \alpha \in
P\right\} \cap [-1,1]^n.\]
\end{defin} 

In Section~\ref{sec:defs} we give detailed connections between (signed) posets,
Coxeter groups, bidirected graphs, permutation statistics, and the resulting polyhedral objects.
Section~\ref{sec:altdes} contains convex-hull and halfspace description of
signed order polytopes, as well as canonical triangulations.
In Section~\ref{sec:hstar} we compute the Ehrhart $h^*$-polynomial of a signed
polytope, encoding the integer-point structure in dilates of the polytope.
Analogous to classical posets, this is related to permutation statistics (now of
type $B$).
Section~\ref{sec:gorenstein} gives a characterization of signed order polytopes
that are Gorenstein; as a consequence these polytopes have a symmetric and unimodal $h^*$-polynomial.
Finally, in Section~\ref{sec:chainpoly} we propose one definition of a signed
chain polytope and study its properties, giving yet another new class of
Gorenstein polytopes.


\section{Signed Posets, Their Cones, and Their Polytopes}\label{sec:defs}
We start by detailing the interpretation of a poset in terms of type-$A$ root
systems, as introduced by Reiner~\cite{R93}.

\begin{prop} \label{prop:coxposet} 
 Let $f$ be the following map from the set of posets on $[n]$ to subsets of $A_n$:
\[f(\Pi) \ := \ \{e_j - e_i: i <_\Pi j\}\]
Then $f$ gives a bijection between partial orders 
of $[n]$ and subsets $P \subseteq A_n$ 
satisfying
\begin{enumerate}[{\rm (1)}]
    \item $\alpha \in P$ implies $-\alpha \not\in P$;
    \item $\PLC{P} = P$.
\end{enumerate}
\end{prop}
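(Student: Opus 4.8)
The plan is to construct an explicit inverse and to translate the two conditions defining the target set into the two substantive axioms of a strict partial order. Given any $P \subseteq A_n$ satisfying (1) and (2), I would define a relation $<_P$ on $[n]$ by declaring $i <_P j$ if and only if $e_j - e_i \in P$, and then show that $f$ and the assignment $P \mapsto {<_P}$ are mutually inverse. The guiding dictionary is that condition (1) corresponds to asymmetry of the order and condition (2) corresponds to transitivity.

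Several parts are routine. The map $f$ is well defined, since $i <_\Pi j$ forces $i \ne j$ and hence $e_j - e_i \in A_n$, and it is injective, because the equivalence $e_j - e_i \in f(\Pi) \iff i <_\Pi j$ recovers the entire order $<_\Pi$, which determines $\Pi$. That $f(\Pi)$ satisfies (1) is immediate from asymmetry: if $e_j - e_i \in f(\Pi)$ then $i <_\Pi j$, so $j <_\Pi i$ fails, whence $e_i - e_j = -(e_j - e_i) \notin f(\Pi)$.

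The substantive point is that $f(\Pi)$ satisfies (2). One inclusion is trivial, so I must check that any root $e_b - e_a \in A_n$ which is a positive combination $\sum_l c_l (e_{j_l} - e_{i_l})$ of elements of $f(\Pi)$, with $c_l > 0$, already lies in $f(\Pi)$. I expect this to be the technical heart of the argument. Reading each $c_l$ as a flow along the directed edge $i_l \to j_l$, comparing the coefficient of each $e_v$ shows that the displayed equation encodes exactly a flow of value $1$ from $a$ to $b$: conservation holds at every vertex $v \ne a,b$, with net outflow $1$ at $a$ and net inflow $1$ at $b$. By flow decomposition, this flow contains at least one directed path $a = d_0 \to d_1 \to \dots \to d_r = b$ whose edges all carry positive flow and therefore lie in $f(\Pi)$, that is $d_0 <_\Pi d_1 <_\Pi \dots <_\Pi d_r$. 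Transitivity of $<_\Pi$ then yields $a <_\Pi b$, i.e. $e_b - e_a \in f(\Pi)$. Phrased differently, the obstacle is to see that in type $A$ the only new roots obtainable from positive combinations are the telescoping ones arising from chains; once this is established, condition (2) is forced.

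It remains to show that $f$ is surjective, for which I would verify that $<_P$ is a strict partial order whenever $P$ satisfies (1) and (2). Irreflexivity is clear since $0 \notin A_n$, and asymmetry is precisely condition (1). For transitivity, if $i <_P j$ and $j <_P k$ then $e_j - e_i, e_k - e_j \in P$; asymmetry rules out $i = k$, so $e_k - e_i \in A_n$, and since $e_k - e_i = (e_j - e_i) + (e_k - e_j) \in \PLC{P} = P$ we obtain $i <_P k$. Thus $<_P$ is the strict order of a poset $\Pi$, and by construction $f(\Pi) = P$, which completes the verification that $f$ is a bijection onto the subsets satisfying (1) and (2).
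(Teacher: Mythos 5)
Your proposal is correct, and it is worth noting that the paper itself offers no proof of this proposition at all: it is imported from Reiner~\cite{R93}, and the authors only append the one-sentence dictionary that condition (1) reflects antisymmetry and condition (2) reflects transitivity. Your write-up supplies exactly the content that sentence elides, and you correctly identify where the real work lies: everything except the closure property $\PLC{f(\Pi)} = f(\Pi)$ is bookkeeping. Your flow argument for that step is sound. Comparing coefficients of each $e_v$ in $e_b - e_a = \sum_l c_l (e_{j_l} - e_{i_l})$ does encode a flow of value $1$ from $a$ to $b$, and since every positive-flow edge $i_l \to j_l$ is a relation $i_l <_\Pi j_l$, the support digraph is acyclic (a directed cycle would force $d <_\Pi d$), so flow decomposition -- or even a greedy walk out of $a$ using conservation at intermediate vertices -- terminates in a chain $a <_\Pi d_1 <_\Pi \dots <_\Pi b$, and transitivity finishes it. The converse direction, including the easily overlooked check that $i \neq k$ before concluding $e_k - e_i \in A_n$ in the transitivity verification, is also complete. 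Two small points you should make explicit rather than tacit. First, you silently read $\PLC{P} = P$ as ``every \emph{root} expressible as a positive combination of elements of $P$ lies in $P$''; taken literally, $\PLC{P}$ contains non-root vectors such as $2\alpha$, so the stated equality is impossible for nonempty $P$. Your reading is the intended one (Reiner's notion of a closed subset of the root system, used consistently elsewhere in the paper, e.g., in the $e_2 = \frac{1}{2}(-e_1+e_2) + \frac{1}{2}(e_1+e_2)$ computation of Example~\ref{ex:bidi minrep}), but one line acknowledging the convention would make the proof self-contained. Second, you likewise read $A_n$ as the full root system $\{e_i - e_j : i \neq j\}$ rather than the half listed in the paper's displayed definition; again this is clearly what is intended, since otherwise $f$ would not land in $A_n$ and condition (1) would be vacuous. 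Neither point affects correctness. As an alternative to flow decomposition, one could run induction on the number of summands or a minimal-counterexample argument, but your version is the standard one and arguably the cleanest way to see that in type $A$ the only roots produced by positive combinations are the telescoping ones.
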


The first property comes from the antisymmetry property of a poset, and the second from
the transitivity property.

\begin{rem}\label{rem: order-reversing}
In several works, including \cite{R93}, the map from posets to subsets of $A_n$
is defined as $f(\Pi) = \{e_i - e_j: i <_\Pi j\}$. We switched the direction of
the inequality to make our upcoming description of order cones consistent with
the description of $P$-partitions found in~\cite{R93}.  
\end{rem}

Classically-defined order cones and order polytopes can be reformulated via the Coxeter group description of the poset.
Let $\Pi$ be a poset 
and recall that $f(\Pi) =  \{e_j - e_i: i <_\Pi j\}$. Then
\[\mathcal{K}(\Pi) \ = \ \left\{x \in \R_{\geq 0}^\Pi: \, \langle \alpha, x\rangle\geq 0 \ \text{
for all } \ \alpha \in f(\Pi) \right\}\]
and 
\[\mathcal{O}(\Pi) \ = \ \left\{x \in \R_{\geq 0}^\Pi: \, \langle\alpha, x\rangle \geq 0 \ \text{ for
all } \ \alpha \in f(\Pi) \right\} \cap [0,1]^\Pi.\]

We often visually view signed posets as  bi-directed graphs, using the definitions found in \cite{Z91}. 

\begin{defin}
Let $\Gamma = (V, E)$ be a graph.  The \emph{incidence set} $I(\Gamma)$ of $\Gamma$ consists of all pairs $(e,v)$ for $v \in e$.
\end{defin}

\begin{defin}
A \emph{bidirected graph} is a graph $\Gamma$ together with a \emph{bidirection} $\sigma$,
which is defined as any map $I(\Gamma) \to \{+, -\}$.   
\end{defin}

We view a signed poset $P$ on $[n]$ as a bidirected graph with vertex set $[n]$: the
elements of $P$ are the edges of the bidirected graph, and the bidirection $\sigma$ is defined so that 
\begin{itemize}
\item $e_j$ corresponds to a loop $e$ on $j$ with $\sigma(e,j) = +$, 
\item $-e_j$ corresponds to a loop $e$ on $j$ with $\sigma(e,j) = -$, 
\item $e_i + e_j$ corresponds to an edge $e=ij$ with $\sigma(e,i) = \sigma(e,j) = +$, 
\item $-e_i-e_j$ corresponds to an edge $e=ij$ with $\sigma(e,i) = \sigma(e,j) = -$, 
\item $e_i - e_j$ corresponds to an edge $e=ij$ with $\sigma(e,i) = +$ and $\sigma(e,j) = -$, and 
\item $-e_i + e_j$ corresponds to an edge $e=ij$ with $\sigma(e, i) = -$ and $\sigma(e,j)=+$.  
\end{itemize}

\begin{example}\label{ex:bidi minrep}
The left image in Figure \ref{fig:bidi} shows a bidirected graph representing the signed
poset $P = \{e_1 + e_2, -e_1 + e_2, e_2\}$. A discussion in \cite[p.~329]{R93} states that
every signed poset has a unique minimal representation, i.e., a minimal subset whose
positive linear closure is the whole signed poset.  However, finding this minimal
representation is not as straightforward as finding the cover relations of a classical
poset.  The minimal representation of $P$ is shown in the right image in Figure~\ref{fig:bidi}. 

\begin{figure}[ht]
	\centering
        \includegraphics[width=.6\textwidth]{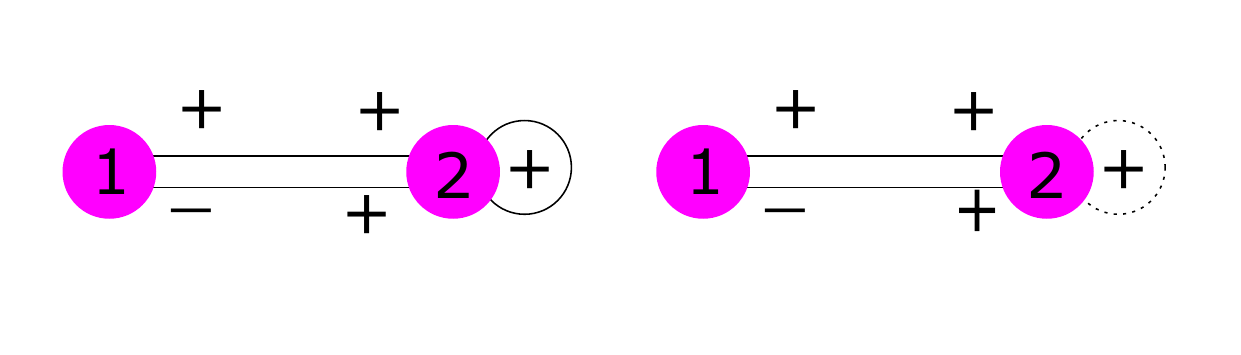}
        \caption{The left hand side shows the bidirected graph representation of $P =  \{-e_1+e_2, e_1+e_2, e_2\}$.  The right hand side indicates that the unique minimal representation of $P$ is $\{-e_1 + e_2,e_1 + e_2\}$. Since $e_2 = \frac{1}{2} (-e_1 + e_2) + \frac{1}{2}(e_1+e_2)$, we see that $e_2$ is in the positive linear closure of the other two elements, and thus is not in the minimal representation of $P$.  This is indicated on the right hand side by using a dotted loop instead of a solid loop to represent $e_2$.  }
\label{fig:bidi}
	\end{figure}

%
%
%

\end{example}

Reiner also gave a notion of homomorphism of signed posets in \cite{R93}. Before we
discuss this definition, we establish some facts and definitions pertaining to $S_n^B$,
mostly following the notation set in \cite{bb13}.

\begin{defin}
A \emph{signed permutation} on $[n]$ is a bijection $\omega$ on $\{ \pm 1, \pm 2, \dots, \pm n \}$ such that $\omega(-i) = -\omega(i)$ for all $i \in [n]$.  We refer to the group of all such bijections as $S_n^B$, the signed permutation group on $[n]$, where the group operation is composition.  
\end{defin}

We will use one-line notation for an element $\omega \in S_n^B$;
later we will also use the notation $\omega = (\pi, \epsilon)$ where $\pi \in S_n$ is defined
by $\pi_i := |\omega(i)|$ and $\epsilon \in \{1,-1\}^n$ is defined via $\epsilon_i := \mbox{sign}(\omega(i))$.
We will also use the following fact.

\begin{prop}\label{prop: b generators}
The set $\{s_1, \dots, s_{n-1}, s_0\}$ generates $S_n^B
$, where $s_i := [1, \dots, i-1, i+1, i, \dots, n]$ for $i \in [n-1]$ and $s_0 := [-1,2,\dots, n]$
\end{prop}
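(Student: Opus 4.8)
The plan is to exploit the semidirect-product structure of $S_n^B$: every signed permutation $\omega = (\pi, \epsilon)$ should factor as an honest permutation $\pi \in S_n$ composed with a diagonal sign change $D_\epsilon$ that flips the sign of coordinate $i$ exactly when $\epsilon_i = -1$. Concretely, viewing $\pi$ as the element of $S_n^B$ with underlying permutation $\pi$ and all signs positive, and letting $D_\epsilon(i) := \epsilon_i\, i$, one checks from $\omega(i) = \epsilon_i \pi_i$ that $\omega = \pi \circ D_\epsilon$. It therefore suffices to show that both factors lie in the subgroup $G := \langle s_0, s_1, \dots, s_{n-1}\rangle$.

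First I would recall the classical fact that the adjacent transpositions $s_1, \dots, s_{n-1}$ generate $S_n$, realized here as the subgroup of $S_n^B$ consisting of those $\omega$ with $\epsilon = (1, \dots, 1)$; hence $\pi \in G$ for every $\pi \in S_n$. Next I would produce all single sign changes. Let $t_i \in S_n^B$ be the element flipping only the sign of coordinate $i$ and fixing every other coordinate, so that $t_1 = s_0$. For general $i$, choose $w \in S_n$ with $w(1) = i$; then a short computation gives $t_i = w\, s_0\, w^{-1}$, since conjugating the sign flip at position $1$ by a permutation carrying $1$ to $i$ relocates the flip to position $i$. As $w \in G$ by the previous step, each $t_i \in G$. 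The single sign changes commute pairwise, so any diagonal $D_\epsilon = \prod_{i : \epsilon_i = -1} t_i$ lies in $G$ as well.

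Combining these observations yields $\omega = \pi \circ D_\epsilon \in G$ for every $\omega \in S_n^B$, so $G = S_n^B$, as claimed. The only real subtlety — and the step I would be most careful about — is the bookkeeping of composition conventions: one must fix whether $\omega = (\pi, \epsilon)$ applies $\pi$ before or after the sign change and then verify the conjugation identity $t_i = w\, s_0\, w^{-1}$ consistently with that choice. Everything else is routine, and as a consistency check one can confirm that the generated group has order $2^n\, n! = |S_n^B|$.
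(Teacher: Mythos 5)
Your proof is correct, and in fact you have supplied something the paper does not: the authors state Proposition~\ref{prop: b generators} without proof, treating it as a standard fact about the hyperoctahedral group (it is the type-$B$ Coxeter presentation, found e.g.\ in Bj\"orner--Brenti or \cite{bb13}), so there is no paper argument to compare against. Your semidirect-product route $S_n^B \cong \{\pm 1\}^n \rtimes S_n$ is the standard textbook proof, and each step checks out under the right-to-left composition convention: $(\pi \circ D_\epsilon)(i) = \pi(\epsilon_i i) = \epsilon_i \pi_i = \omega(i)$, using that the all-positive lift of $\pi$ satisfies $\pi(-j) = -\pi(j)$; the conjugation identity $t_i = w\, s_0\, w^{-1}$ holds since $w^{-1}$ carries $i$ to $1$, $s_0$ flips it, and $w$ carries $-1$ to $-i$ while every $j \neq \pm i$ passes through untouched; and the $t_i$ visibly commute, giving $D_\epsilon \in G$. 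Your caution about conventions is well placed --- with the opposite convention one would instead need $\omega = D_{\epsilon'} \circ \pi$ with a permuted sign vector --- but as written everything is consistent. One small remark: you could make the generating words fully explicit by noting $t_i = s_{i-1} \cdots s_1\, s_0\, s_1 \cdots s_{i-1}$, which instantiates your conjugation with the specific $w = s_{i-1}\cdots s_1$ and avoids any appeal to choice; your closing order count $2^n\, n!$ is a fine sanity check but is not needed for the argument, since you prove containment of an arbitrary element directly.
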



The elements of $S_n^B$ have a natural action on the type-$B$ root system.  
\begin{defin}\label{group action}
	  The elements of $S_n^B$ have a linear action on $B_n$ generated as follows,
where $i,j \in [n]$.  If $\omega(i) = j$, then $\omega e_i = e_j$.  If $\omega(i) = -j$, then $\omega e_i = -e_j$.  
\end{defin}
 
\begin{defin}\label{def:signedhomomorph}
Let $P_1$ and $P_2$ be signed posets on $[n]$.  Then $P_1$ and $P_2$ are \emph{isomorphic} if there exists $\omega \in S_n^B$ such that $\omega P_1 = P_2$.  
\end{defin}

\begin{example}\label{ex:all 2d}
We see from Figure \ref{fig:examples} that there are many more combinatorial types of order polytopes in the signed poset setting than in the classical poset setting, even in two dimensions.  The order polytopes shown come from the following signed posets:

\begin{enumerate}[(A)] 
    \item $P = \{e_1, e_2\}$
    \item $ P = \{\}$
    \item $ P = \{e_2\}$
    \item $ P = \{e_1+e_2, -e_1+e_2,e_1,e_2\}$
    \item $ P = \{-e_1+e_2\}$
    \item $ P = \{-e_1+e_2, e_1+e_2, e_2\}$
    \item $ P = \{e_2, e_1+e_2\}$
\end{enumerate}
The polytope labeled (F) is the order polytope of the signed poset illustrated in Figure
\ref{fig:bidi}.  Note that the supporting hyperplanes of this polytope are given by $x_2 =
1$ and $\langle\alpha, x\rangle = 0$, where $\alpha$ is an element of the unique minimal
representation described in Example 1.6.  We will generalize this observation below. 
\end{example}


\begin{figure}[ht]
	\centering
        \includegraphics[width=.9\textwidth]{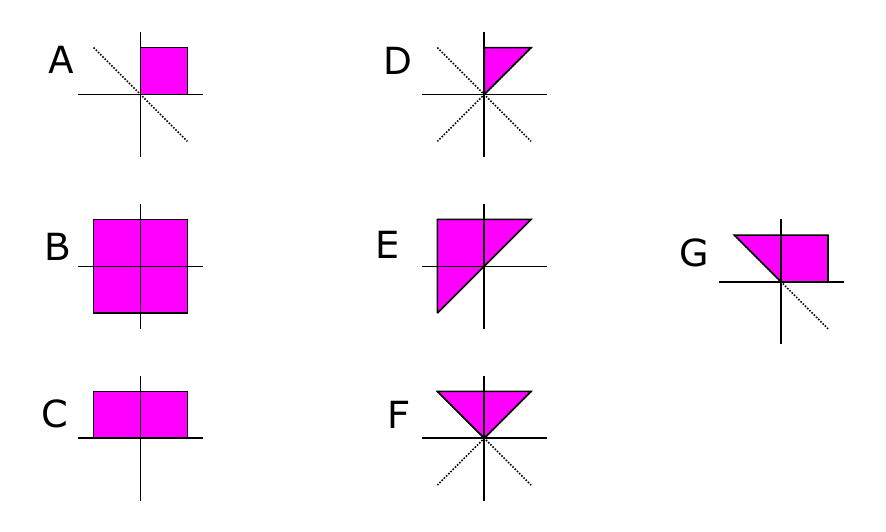}
        \caption{Some order polytopes for signed posets on two elements.}\label{fig:examples}
	\end{figure}

We note that our geometric constructions play nicely with Reiner's definition of signed poset isomorphism.

\begin{defin}\label{def: unimodular eq}
	Two lattice polytopes $Q$ and $Q'$ are \emph{unimodularly equivalent} if there is an affine lattice isomorphism of the ambient lattices mapping $Q'$ onto $Q$.
\end{defin}

\begin{prop}\label{prop: geo iso nice}
	Suppose $P$ and $P'$ are isomorphic signed posets on $[n]$. Then $\op{P}$ and $\op{P'}$ are unimodularly equivalent.  
\end{prop}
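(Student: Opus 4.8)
The plan is to exhibit a single linear map that simultaneously realizes the signed-poset isomorphism and the claimed unimodular equivalence. By Definition~\ref{def:signedhomomorph} there is some $\omega \in S_n^B$ with $\omega P = P'$, and by Definition~\ref{group action} the element $\omega$ acts linearly on $\R^n$ by sending each standard basis vector $e_i$ to $\pm e_{|\omega(i)|}$. First I would record that this linear action is given by a \emph{signed permutation matrix} $M_\omega$, i.e.\ a matrix with exactly one nonzero entry, equal to $\pm 1$, in each row and column. Two features of $M_\omega$ drive the whole argument: it lies in $GL_n(\Z)$ (it has integer entries and determinant $\pm 1$, and its inverse $M_\omega^{-1} = M_\omega^{\mathsf T}$ is again a signed permutation matrix), so it is an affine lattice isomorphism of $\Z^n$; and it is orthogonal, so $\langle \omega\alpha, x\rangle = \langle \alpha, \omega^{-1}x\rangle$ for all $\alpha, x \in \R^n$.

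Next I would verify that $\omega$ carries $\op{P}$ onto $\op{P'}$ by checking the two families of defining inequalities in Definition~\ref{def:signedgeo} separately. For the root inequalities: a point $x$ satisfies $\langle \beta, x\rangle \geq 0$ for every $\beta \in P' = \omega P$ if and only if $\langle \omega\alpha, x\rangle \geq 0$ for every $\alpha \in P$, which by orthogonality is equivalent to $\langle \alpha, \omega^{-1}x\rangle \geq 0$ for every $\alpha \in P$. For the box constraint: the cube $[-1,1]^n$ is invariant under any signed permutation matrix, since permuting coordinates and flipping signs both preserve it, so $x \in [-1,1]^n$ if and only if $\omega^{-1}x \in [-1,1]^n$. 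Combining the two equivalences shows that $x \in \op{P'}$ if and only if $\omega^{-1}x \in \op{P}$; that is, $\omega^{-1}\,\op{P'} = \op{P}$.

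Finally, since $\omega^{-1}$ is realized by the matrix $M_\omega^{-1} \in GL_n(\Z)$, it is an affine lattice isomorphism of the ambient lattice $\Z^n$ mapping $\op{P'}$ onto $\op{P}$, which is exactly the content of Definition~\ref{def: unimodular eq}. I do not expect a genuine obstacle here: the argument is a direct verification, and the only points demanding care are bookkeeping ones — confirming that the action of Definition~\ref{group action} is indeed both orthogonal and integral, and keeping track of whether $\omega$ or $\omega^{-1}$ appears. As a bonus, the identical computation (simply omitting the box constraint) shows $\omega\,\ok{P} = \ok{P'}$, so the corresponding statement for the signed order cones comes for free.
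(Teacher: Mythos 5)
Your proof is correct, but it takes a genuinely different route from the paper. The paper invokes Proposition~\ref{prop: b generators} to reduce to the Coxeter generators $s_1,\dots,s_{n-1},s_0$, and then argues generator by generator: for $\omega = s_i$ the defining hyperplanes of $\op{P'}$ are the reflections of those of $\op{P}$ across $x_i = x_{i+1}$, and for $\omega = s_0$ across $x_1 = 0$, each reflection being an affine lattice isomorphism preserving the cube $[-1,1]^n$. You instead handle an arbitrary $\omega \in S_n^B$ in one stroke, observing that its linear action is a signed permutation matrix $M_\omega$, which is simultaneously orthogonal (giving the adjoint identity $\langle \omega\alpha, x\rangle = \langle \alpha, \omega^{-1}x\rangle$ that transports the root inequalities) and an element of $GL_n(\Z)$ (giving the lattice isomorphism), while $[-1,1]^n$ is manifestly invariant. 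Your version buys a couple of things: it sidesteps the reduction to generators, and in particular the tacit appeal to transitivity of unimodular equivalence needed when $\omega$ is a word of length greater than one in the generators (the paper never states this composition step, though it is harmless); it produces the explicit matrix realizing the equivalence rather than a chain of reflections; and, as you note, deleting the box constraint yields $\omega\,\ok{P} = \ok{P'}$ with no extra work. What the paper's approach buys is a vivid Coxeter-theoretic picture --- each generator acts as a reflection of the hyperplane arrangement --- consistent with the paper's overall framing of signed posets via root systems, and it only requires checking two very concrete maps. Both arguments are complete; your bookkeeping (which of $\omega$, $\omega^{-1}$ appears, and that Definition~\ref{group action} really gives $\omega e_i = \pm e_{|\omega(i)|}$ with sign $\mathrm{sign}(\omega(i))$) is handled correctly.
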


\begin{proof}
Since $P'$ is isomorphic to $P$, then $P' = \omega P$ for some $\omega \in S_n^B$.  By Proposition \ref{prop: b generators}, it suffices to verify Proposition \ref{prop: geo iso nice} for the cases in which  $\omega$ is equal to the generators $s_i = [1, \dots, i-1, i+1, i, \dots, n]$, and $s_0 = [-1, 2,\dots, n]$. 

 Suppose $\omega = s_i$ for $i \in [n-1]$.  Then 
\[\op{P} \ = \ \left\{x \in \R^n: \, \langle\alpha, x\rangle \geq 0 \mbox{ for all } \alpha \in P
\right\} \cap [-1,1]^n\]
and 
\[\op{P'} \ = \ \left\{x \in \R^n: \, \langle s_i\alpha, x \rangle\geq 0 \mbox{ for all } \alpha \in P
\right\} \cap [-1,1]^n.\]
From this description, we see that the hyperplanes defining $\op{P'}$ are reflections of the hyperplanes defining $\op{P}$ about the hyperplane $x_i = x_{i+1}$ (since the cube $[-1,1]^n$ is symmetric about this hyperplane).  This is an affine lattice isomorphism, so $\op{P}$ and $\op{P'}$ are unimodularly equivalent.  

 Similarly, for $\omega = s_0$, the hyperplanes defining $\op{P'}$ are reflections of the
hyperplanes defining $\op{P}$ about the hyperplane $x_1 = 0$, which is again an affine lattice isomorphism.  
\end{proof}

We note that the order cone of a classical poset is always pointed.  This is not the case for order cones of signed posets.  This makes it so that one cannot always write down a rational generating function for the integer point transform of such an order cone.  However, we can construct a pointed cone encoding the same information by homogenizing the order polytope, defined below following, e.g., \cite{B18}.  
As is the case for classical posets, the order polytope is related to the order cone of a signed poset as follows: 

\begin{defin}
	Let $Q$ be a polytope in $\R^n$.  The \emph{homogenization} of $Q$ is given by 
	\[\Hom{Q} \ := \ \left\{(\mathbf{x},t) \in \R^{n+1}: \, t\in \R_{\geq 0}, \
\mathbf{x}\in tQ \right\} . \]
\end{defin}

\begin{prop}\label{prop:fulldim}
For a signed poset $P$ with $n$ elements, $\hom(O_P) = K_{\hat{P}}$, where $\hat{P}$ is given by $P \cup \{e_{n+1} \pm e_i: 1\leq i \leq n\}$.  
\end{prop}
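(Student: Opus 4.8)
The plan is to unwind both sides of the claimed identity into explicit systems of inequalities in the variables $(\mathbf{x}, t) \in \R^{n+1}$ and check that they coincide. First I would analyze the right-hand side. The defining inequalities of $\mathcal{K}_{\hat{P}}$ split into two groups: those indexed by $\alpha \in P$ and those indexed by the new roots $e_{n+1} \pm e_i$. Since each $\alpha \in P \subset B_n$ has vanishing $(n+1)$st coordinate, the inequality $\langle \alpha, (\mathbf{x}, t)\rangle \ge 0$ reads simply $\langle \alpha, \mathbf{x}\rangle \ge 0$ and does not involve $t$. The roots $e_{n+1} + e_i$ and $e_{n+1} - e_i$ contribute $t + x_i \ge 0$ and $t - x_i \ge 0$, i.e.\ $-t \le x_i \le t$ for every $i \in [n]$. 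Thus
\[
\mathcal{K}_{\hat{P}} \ = \ \left\{ (\mathbf{x}, t) \in \R^{n+1} : \, \langle \alpha, \mathbf{x}\rangle \ge 0 \text{ for all } \alpha \in P, \ -t \le x_i \le t \text{ for all } i \in [n] \right\}.
\]
I would then record the automatic consequence that any such point satisfies $t \ge |x_i| \ge 0$, so no separate nonnegativity constraint on $t$ is needed.

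Next I would expand the left-hand side directly from the definition of homogenization, treating $t > 0$ and $t = 0$ separately. For $t > 0$, a point $\mathbf{x}$ lies in $t\,\op{P}$ exactly when $\mathbf{x}/t \in \op{P}$; applying the definition of $\op{P}$ and clearing the positive denominator turns the conditions $\langle \alpha, \mathbf{x}/t\rangle \ge 0$ and $-1 \le x_i/t \le 1$ into precisely $\langle \alpha, \mathbf{x}\rangle \ge 0$ and $-t \le x_i \le t$, matching the description of $\mathcal{K}_{\hat{P}}$ above. For $t = 0$, the definition forces $\mathbf{x} \in 0 \cdot \op{P} = \{\mathbf{0}\}$, while on the right-hand side the inequalities $-0 \le x_i \le 0$ likewise force $\mathbf{x} = \mathbf{0}$ (and $\langle \alpha, \mathbf{0}\rangle = 0$ holds trivially); hence both sides meet the hyperplane $t = 0$ in the single apex $(\mathbf{0}, 0)$. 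Combining the two cases gives the equality of sets.

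The computation is a direct translation of inner products, so the only point requiring genuine care is the boundary case $t = 0$: one must check that the homogenization, defined via the dilate $t\,\op{P}$, reproduces exactly the cone apex cut out by the inequalities of $\mathcal{K}_{\hat{P}}$ rather than some larger set, and this relies on $\op{P}$ being a bounded polytope containing the origin. I would note that $\op{P}$ is nonempty and bounded (it sits inside $[-1,1]^n$ and contains $\mathbf{0}$), which guarantees $0 \cdot \op{P} = \{\mathbf{0}\}$ and that the description obtained for $t > 0$ closes up correctly at $t = 0$. No deeper structural input about the signed poset $P$ is needed beyond the splitting of $\hat{P}$ into $P$ and the ``cube'' roots $e_{n+1} \pm e_i$.
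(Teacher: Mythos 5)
Your proof is correct and follows essentially the same route as the paper, which simply asserts that both $\hom(\op{P})$ and $\mathcal{K}_{\hat{P}}$ are given by the common inequality system $\left\{(\mathbf{x},t) \in \R^{n+1} : -t \le x_i \le t \text{ for all } i \in [n], \ \langle \alpha, \mathbf{x}\rangle \ge 0 \text{ for all } \alpha \in P\right\}$ (the paper's displayed $\langle \alpha,\mathbf{x}\rangle \le 0$ is a sign typo). Your version merely spells out the translation of the roots $e_{n+1}\pm e_i$ into the cube inequalities and adds the careful $t=0$ check that the paper leaves implicit.
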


\begin{proof}
Both $\hom(O_P)$ and $K_{\hat{P}}$ are given by
\[ \left\{(\mathbf{x}, t) \in \R^{n+1}: \, -t\leq x_i \leq t \mbox{ for all } i \in [n]
\mbox{ and } \langle \alpha, \mathbf{x}\rangle \leq 0 \mbox{ for all } \alpha \in P
\right\}. \qedhere \]
\end{proof}

We now show that $\ok{P}$ (and thus $\op{P}$) is full dimensional:

\begin{prop} \label{prop: full-dim}
	Let $P$ be a signed poset on $[n]$. Then $\dim(\ok{P}) = n$.
\end{prop}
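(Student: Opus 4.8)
The plan is to show that $\ok{P}$ has nonempty interior, which immediately gives $\dim(\ok{P}) = n$ since $\ok{P} \subseteq \R^n$. Concretely, it suffices to produce a single point $x \in \R^n$ with $\langle \alpha, x \rangle > 0$ for every $\alpha \in P$: because $P$ is finite, any such $x$ has a whole neighborhood on which all the inequalities $\langle\alpha,\cdot\rangle > 0$ persist, and this neighborhood lies inside $\ok{P}$. (If $P = \emptyset$ then $\ok{P} = \R^n$ and there is nothing to prove, so we may assume $P$ is nonempty.)

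The cleanest route is cone duality. Let $C := \mathrm{cone}(P)$ be the cone generated by $P$, so that $\ok{P}$ is exactly the dual cone $C^* = \{x \in \R^n : \langle y, x \rangle \geq 0 \text{ for all } y \in C\}$, since a linear functional is nonnegative on all of $C$ if and only if it is nonnegative on the generators $P$. By the standard duality between pointed cones and full-dimensional cones, $\dim(C^*) = n$ as soon as $C$ is pointed, i.e.\ $C \cap (-C) = \{0\}$. Thus the statement reduces to showing that $C$ is pointed, and this is precisely where the signed-poset axioms from the definition of $P$ enter.

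To show $C$ is pointed, suppose toward a contradiction that some nonzero $v$ lies in $C \cap (-C)$. Writing $v$ and $-v$ each as a nonnegative combination of elements of $P$ and adding the two relations, we obtain a nonnegative combination $\sum_{\alpha \in P} c_\alpha \alpha = 0$ that is not identically zero. Choose $\gamma \in P$ with $c_\gamma > 0$. Since $0 \notin B_n$, the relation $c_\gamma \gamma = -\sum_{\alpha \neq \gamma} c_\alpha \alpha$ cannot have all remaining coefficients vanish, so dividing by $c_\gamma$ exhibits $-\gamma$ as a genuine positive linear combination of elements of $P$. As $-\gamma \in B_n$, axiom (2) gives $-\gamma \in \PLC{P} = P$, while $\gamma \in P$; this contradicts axiom (1). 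Hence $C$ is pointed, and the duality above yields $\dim(\ok{P}) = n$.

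The main subtlety, and the one step that genuinely uses the structure of $P$ rather than generic convex geometry, is the extraction of $-\gamma$ as a \emph{positive} combination: one must verify that at least one coefficient other than $c_\gamma$ is strictly positive, which is exactly where $0 \notin B_n$ is used, so that the combination is nontrivial and property (2) applies. Equivalently, one could bypass duality and invoke Gordan's theorem directly: the nonexistence of a nontrivial nonnegative dependence $\sum_{\alpha} c_\alpha \alpha = 0$ is precisely the alternative that guarantees a strict solution $x$ to $\langle \alpha, x\rangle > 0$ for all $\alpha \in P$, which again places $x$ in the interior of $\ok{P}$.
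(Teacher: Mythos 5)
Your proof is correct, and it takes a genuinely different route from the paper's. The paper proves Proposition \ref{prop: full-dim} by induction on $n$: it restricts $P$ to a signed poset $P'$ on $[n-1]$, takes an interior point of $\ok{P'}$ by the inductive hypothesis, and shows the last coordinate can be chosen consistently, via a case analysis that pairs each relation forcing a lower bound on $x_n$ (e.g.\ $e_n - e_j \in P$) with each relation forcing an upper bound (e.g.\ $-e_n + e_k \in P$) and uses closure under positive combinations to produce a relation in $P'$ (e.g.\ $e_k - e_j$) guaranteeing the bounds are compatible. You instead compress the entire role of the signed-poset axioms into one algebraic step: a nonzero $v \in \mathrm{cone}(P) \cap -\mathrm{cone}(P)$ yields a nontrivial nonnegative dependence $\sum_\alpha c_\alpha \alpha = 0$, from which you extract $-\gamma$ as a genuine positive combination of elements of $P$ --- your verification that some coefficient besides $c_\gamma$ is strictly positive, using $0 \notin B_n$, is exactly the care this step needs --- so that $-\gamma \in \PLC{P} = P$ contradicts axiom (1); pointedness of $\mathrm{cone}(P)$ then gives full-dimensionality of the dual cone $\ok{P}$ by standard duality (or, as you note, directly by Gordan's theorem). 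Your argument is shorter, avoids the nine-case analysis, and generalizes verbatim to subsets of any root system satisfying the two axioms, i.e., to Stembridge's Coxeter-cone setting, whereas the paper's argument is tied to the combinatorics of $B_n$. What the paper's constructive proof buys in exchange is an explicit interior point $\phi$: the discussion immediately following the proposition rounds $\phi$ to a signed permutation, establishing $\JH(P) \neq \emptyset$, on which the proof of Proposition \ref{prop: every poset nat label} explicitly relies. Your nonconstructive existence of an interior point still suffices for that downstream use (any interior point, after a generic perturbation making the coordinates' absolute values distinct and nonzero, rounds in the same way), but be aware that your proof discards the explicit witness the authors later invoke.
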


\begin{proof}
We prove that we can construct a point $\phi: [n] \to \R$ in the interior of $\ok{P}$.
We argue inductively on $n$.  The base case $n=1$ holds, since the order cone is either
the non-negative ray, the non-positive ray, or $\R$.

Now suppose we have a signed poset $P$ on $[n]$ with $n \ge 2$. Let $P'$ be the signed poset on $[n-1]$ obtained from restricting $P$ to the set $\{\pm e_i: 1 \leq i \leq n-1\} \cup \{\pm e_i \pm e_j: 1 \leq i <j\leq n-1\}$.  By our inductive hypothesis, there exists a point $\phi':[n-1] \to \R$ in the interior of $\ok{P'}$.  Our strategy is to show that we can extend $\phi'$ to a point $\phi$ in the interior of $\ok{P}$. We first let $\phi(i) = \phi'(i)$ for $1\leq i \leq n-1.$  We now need to show that there exists a choice of $\phi(n)$ such that $\phi$ is in the interior of $\ok{P}$. To make the following equations easier to look at, let $\phi(n) =x$.  

The only way in which there is no viable choice for $x$ is if the hyperplanes defining $\ok{P}$ together with the choice of $\phi'$ give rise to inequalities of the form $a<x<b$ for some $a\geq b$. 
We first list the ways that we could get a restriction of the form $x>a$:

\begin{enumerate}[(i)] 
    \item Suppose $e_n-e_j \in P$ where $j \neq n$.  Then $\phi \in \ok{P}^\circ$ implies $x>\phi(j)$.
    \item Suppose $e_n+e_j \in P$ where $j \neq n$.  Then $\phi \in \ok{P}^\circ$ implies $x>-\phi(j)$.  
    \item Suppose $e_n \in P$.  Then $\phi \in \ok{P}^\circ$ implies $x> 0$.  
\end{enumerate}

We next list the ways that we could get a restriction of the form $x<b$.  

\begin{enumerate}[(i)] 
\setcounter{enumi}{3}
    \item Suppose $-e_n+e_k \in P$ where $k \neq n$.  Then $\phi \in \ok{P}^\circ$
implies $x<\phi(k)$.
    \item Suppose $-e_n-e_k \in P$ where $k \neq n$.  Then $\phi \in \ok{P}^\circ$
implies $x<-\phi(k)$.  
    \item Suppose $-e_n \in P$.  Then $\phi \in \ok{P}^\circ$ implies $x< 0$.  
\end{enumerate}

We now show that in all cases in which we have a restriction of the form $x<b$ and a restriction of the form $x>a$, we get that $a<b$.  (And thus, there is a solution for $x$).

\begin{enumerate}
\item[Case 1:] We have situations (i) and (iv) above and thus the restriction
$\phi(j)<x<\phi(k)$.  Since $e_n-e_j, -e_n+e_k \in P$, we know that $e_k-e_j$ is in $P$
and $P'$.  Now $\phi' \in \ok{P'}^\circ$ implies $\phi(j) < \phi(k)$. 

\item[Case 2:] We have situations (i) and (v), and thus $\phi(j)<x<-\phi(k)$.  Since
$e_n-e_j,-e_n-e_k \in P$, we know that $-e_j-e_k$ is in both $P$ and $P'$.  Since $\phi'$
was chosen to be in $\ok{P'}^\circ$, we have $\phi(j) <- \phi(k)$.  

\item[Case 3:] We have the situations (i) and (vi) and thus $\phi(j)<x<0$. Since
$e_n-e_j, -e_n \in P$, we know that $-e_j \in P, P'$ and so $\phi(j) <0$. 

\item[Cases 4--8:] The arguments for the situations (ii) and (iv), (ii) and (v), (ii) and
(vi), (iii) and (iv), (iii) and (v) 
are similar as the previous three cases.  

\item[Case 9:] We have the situations (iii) and (vi).  This would imply that $e_n,-e_n \in P$, which violates the asymmetry property of a signed poset.  
\end{enumerate}

Thus, in all viable cases it is possible to choose a value of $\phi(n)$ so that $\phi \in
\ok{Pa}^\circ$.  
\end{proof}

The map $\phi(x)$ constructed above can be modified to give a signed permutation in the
Jordan--H\"{o}lder set of $P$, which will be necessary below when we describe triangulations of~$\ok{P}$. 

\begin{defin}
Let $P$ be a signed poset on $[n]$.  The \emph{Jordan--H\"{o}lder set} $\JH(P)$ of $P$  is the set of signed permutations $\omega \in S_n^B$ such that $\omega$ is order-preserving, that is, $\langle \omega, \alpha\rangle \geq 0$ for all $\alpha \in P$, where we think of $\omega$ as the point $(\omega_1, \dots , \omega_n)\in \R^n$.  
\end{defin}

\begin{rem}
Reiner gives a definition of Jordan--H\"{o}lder set in \cite{R93} that is in the same
spirit, but slightly different to the one here. He defines the Jordan--H\"{o}lder set of
a signed poset $P$ on $[n]$ as the set of all $\sigma \in B_n$ such that $\alpha \in
\sigma B_n^+$ for all $\alpha \in P$.  Here, $B_n^+$ refers to the set $\{e_i: \, 1\leq
i\leq n\}\cup \{e_i \pm e_j: \, 1\leq i\leq j\leq n\}$, the \emph{positive roots} of
$B_n$.  Our definition $\JH(P)$ can be rewritten similarly, as the set of all $\sigma \in
B_n$ such that  $\alpha \in \sigma S$, where $S=\{e_i: \, 1\leq i\leq n\}\cup \{e_i \pm
e_j: \, 1\leq j\leq i\leq n\}$ .  The difference between our definition and Reiner's is analogous to the slight differences in our early definitions discussed in Remark~\ref{rem: order-reversing}.
\end{rem}

To get from the point $\phi$ described in the proof of Proposition \ref{prop:fulldim} to
the corresponding signed permutation $\omega \in \JH(P)$, do the following:  Consider the
coordinates of $\phi$ and determine which one has the highest absolute value.  Replace
this one with $n$ or $-n$, corresponding to the sign of this coordinate.  Repeat with the
remaining coordinates, this time replacing with $n-1$ or $-(n-1)$.  Repeat until you have
replaced every coordinate.  Call this point $p$.  Then the corresponding signed
permutation is the unique element of $\omega \in S_n^B$ such that $(\omega(1),\dots, \omega(n) ) = p$.  

We now introduce the idea of a naturally labeled signed poset, mimicking a similar notion for classical posets.  

\begin{defin}
Let $P$ be a signed poset on $[n]$, and let $\id\in S_n^B$ be the identity element.  We say $P$ is \emph{naturally labeled} if and only if $\id\in\JH(P)$.
\end{defin}

\begin{prop}\label{prop: every poset nat label}
Every signed poset is isomorphic to a naturally labeled signed poset.  
\end{prop}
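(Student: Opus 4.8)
The plan is to use the full-dimensionality result, Proposition 1.18, together with the explicit recipe the authors give for passing from an interior point of $\ok{P}$ to an element of $\JH(P)$. By Proposition 1.18 the cone $\ok{P}$ is $n$-dimensional, so its interior is nonempty; fix an interior point $\phi \in \ok{P}^\circ$. First I would arrange that $\phi$ has all coordinates of distinct absolute value: since $\ok{P}^\circ$ is open and full-dimensional, a generic such point exists (the locus where two coordinates have equal absolute value is a finite union of hyperplanes, hence cannot contain the open set $\ok{P}^\circ$). Running the sorting recipe from the discussion after Proposition 1.18 on this generic $\phi$ produces a well-defined point $p$ whose coordinates are a signed permutation of $(1,\dots,n)$, and hence a signed permutation $\omega \in S_n^B$ with $(\omega(1),\dots,\omega(n)) = p$. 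The key point is that $\omega$ lies in $\JH(P)$: the recipe preserves, for every pair of coordinates, both the ordering of absolute values and the signs, so the sign of $\langle \alpha, p\rangle$ agrees with that of $\langle \alpha, \phi\rangle \ge 0$ for each $\alpha \in P$.

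Next I would use this $\omega$ to build the isomorphism. The natural candidate is to set $P' := \omega^{-1} P$ and show $P'$ is naturally labeled, i.e. $\id \in \JH(P')$. By Definition 1.11, $P'$ is isomorphic to $P$, so it suffices to verify that $\id$ is order-preserving for $P'$, which unwinds to the condition $\langle \id, \omega^{-1}\alpha\rangle \ge 0$, equivalently $\langle \omega, \alpha\rangle \ge 0$, for all $\alpha \in P$ — and this is exactly the statement $\omega \in \JH(P)$ established above. The one technical check is that the $S_n^B$-action on roots is an orthogonal action, so $\langle \omega^{-1}\alpha, \id\rangle = \langle \alpha, \omega\,\id\rangle = \langle \alpha, \omega\rangle$ (viewing $\omega$ as the point $(\omega_1,\dots,\omega_n)$); this follows from Definition 1.10, since each generator $s_i, s_0$ acts by a signed permutation of coordinates and thus preserves the standard inner product.

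The main obstacle, and the step deserving the most care, is the claim that the sorting recipe actually lands in $\JH(P)$ rather than merely producing some signed permutation. Concretely, one must confirm that replacing each coordinate $\phi(i)$ by $\pm k$ (with $k$ determined by the rank of $|\phi(i)|$ and the sign matching $\operatorname{sign}(\phi(i))$) does not flip the sign of any inner product $\langle \alpha, \cdot\rangle$ with $\alpha \in P$. Since every $\alpha \in B_n$ has the form $\pm e_i$, $e_i - e_j$, $e_i + e_j$, or $-e_i - e_j$, the relevant inner products are $\pm\phi(i)$, $\phi(i)-\phi(j)$, or $\pm(\phi(i)+\phi(j))$; I would check case by case that the recipe preserves the sign of each such expression, using that it preserves individual signs and the relative order of absolute values. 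This monotonicity is where the distinctness of $|\phi(i)|$ (guaranteed by genericity above) is essential, so that the recipe is unambiguous and order-preserving. Once this is in hand, the proposition follows immediately.
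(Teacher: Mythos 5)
Your proposal is correct and takes essentially the same route as the paper: obtain some $\omega \in \JH(P)$ from an interior point of $\ok{P}$ (Proposition~\ref{prop: full-dim} together with the sorting recipe), then pass to $\omega^{-1}P$ and verify $\id \in \JH(\omega^{-1}P)$ via the identity $\langle \id, \omega^{-1}\alpha\rangle = \langle \omega, \alpha\rangle$, justified by orthogonality of the $S_n^B$-action. Your genericity argument (perturbing $\phi$ so all coordinates have distinct absolute values, and checking case by case that the signed-rank recipe preserves the sign of $\langle\alpha,\cdot\rangle$ for each root type) is a careful filling-in of a step the paper only asserts, not a different approach.
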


\begin{proof}
	Let $P$ be a signed poset.  As a consequence of the proof of Proposition
\ref{prop: full-dim}, there exists some $\omega \in \JH(P)$. Consider $\omega^{-1} P$ which, by definition, is isomorphic to
$P$.  To show that $\id \in \JH(\omega^{-1}P)$, it suffices to show that $\langle \id, \omega^{-1}\alpha \rangle \geq 0 $ for all $\alpha\in P$.  
	
	For any $\alpha$ in the type-$B$ root system and any $\omega \in S_n^B$, we have $\langle \id, \omega^{-1}\alpha\rangle = \langle \omega, \alpha \rangle$.  This can be shown by noting that $\langle \id, \omega^{-1}e_i\rangle = \langle \omega, e_i \rangle= \omega(i)$ and then using linearity.  

	Thus, for any $\alpha \in P$, $\langle \id, \omega^{-1}\alpha \rangle =
\langle \omega, \alpha \rangle \geq 0$, since $\omega \in \JH(P)$.
\end{proof}


\section{Alternative Descriptions of $\ok{P}$ and $\op{P}$}\label{sec:altdes}
We now discuss a few other descriptions of the order cones and polytopes of signed
posets.  Definition \ref{def:signedgeo}  gives a hyperplane description of $\op{P}$ and
$\ok{P}$, but this hyperplane description may be redundant. This is also true for the
definition of classical order polytopes and cones.  The following proposition
from \cite{S86} gives an irredundant representation for the order polytopes of classical posets. 

\begin{prop}[Stanley~\cite{S86}]
Let $\Pi$ be a classical poset.  Then, an irredundant representation of $\mathcal{O}(\Pi)$ is given by

\[\mathcal{O}(\Pi) \ = \ \left\{x \in \R^\Pi: \begin{array}{ll}
         x_a \leq x_b  & \text{if } a\lessdot b\\
        x_a \geq 0 & \text{if }a\text{ is a minimal element} \\
        x_a\leq 1 & \text{if }a \text{ is a maximal element}
        \end{array}\right\},\]
        where $a \lessdot_\Pi b$ means that $a \leq_\Pi b$ is a cover relation in $\Pi$.
\end{prop}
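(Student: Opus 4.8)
The plan is to prove the two claims implicit in the statement separately: first that the reduced inequality system cuts out exactly $\mathcal{O}(\Pi)$, and then that no single inequality of the reduced system may be dropped.

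For the set equality, the inclusion $\subseteq$ is immediate, since every inequality of the reduced system ($x_a \le x_b$ for covers, $x_a \ge 0$ for minimal $a$, $x_a \le 1$ for maximal $a$) already appears among the defining inequalities of $\mathcal{O}(\Pi)$. For the reverse inclusion I would recover each defining inequality from a point $x$ satisfying the reduced system by a chain argument. Given a relation $a \leq_\Pi b$, choose a saturated chain $a = c_0 \lessdot c_1 \lessdot \cdots \lessdot c_k = b$ and concatenate the cover inequalities $x_{c_0} \le x_{c_1} \le \cdots \le x_{c_k}$ to obtain $x_a \le x_b$. For $x_p \ge 0$, use finiteness of $\Pi$ to pick a minimal element $m \leq_\Pi p$; then $0 \le x_m \le x_p$. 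Dually, a maximal $M \geq_\Pi p$ gives $x_p \le x_M \le 1$. This reproduces every defining inequality, hence $\supseteq$.

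For irredundancy I would show that for each inequality of the reduced system there is a point of $\R^\Pi$ satisfying all the \emph{remaining} reduced inequalities but violating the chosen one; since $\mathcal{O}(\Pi)$ is full dimensional (the point $x_c = \ell(c)/(n+1)$ for a linear extension $\ell\colon \Pi \to [n]$, $n=|\Pi|$, makes every inequality strict and so lies in the interior), this is precisely the assertion that no inequality is redundant. For a minimality constraint $x_a \ge 0$ with $a$ minimal, take the point with $x_a = -\epsilon$ and $x_c = \tfrac12$ for all $c \neq a$: since $a$ is the bottom of every cover it meets and never the top of one, every cover inequality and every other minimality/maximality constraint holds, while $x_a \ge 0$ fails. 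The maximality constraints are handled symmetrically using $x_a = 1+\epsilon$.

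The main obstacle is the irredundancy of a cover inequality $x_a \le x_b$, where I must produce a point with $x_a > x_b$ that respects every \emph{other} cover relation. For this I would invoke the standard fact that a cover $a \lessdot b$ admits a linear extension $\ell$ with $a$ and $b$ consecutive, that is $\ell(b) = \ell(a)+1$ (contract $a$ and $b$; the cover condition guarantees no element lies strictly between them, so the contraction is a well-defined poset and any linear extension of it lifts). Scaling $\ell$ into $(0,1)$ and then swapping the values assigned to $a$ and $b$ produces a point with $x_a > x_b$; because $a$ and $b$ were adjacent in $\ell$, every other cover inequality touching $a$ or $b$ retains a gap of at least one $\ell$-level and so survives the swap, while the minimality and maximality constraints are untouched as all coordinates stay in $(0,1)$. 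Verifying that the swap preserves the remaining inequalities is the one genuinely case-based check, but it reduces to comparing $\ell$-values and is routine once the adjacency of $a$ and $b$ is in hand.
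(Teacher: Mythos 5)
Your proof is correct. Note that the paper itself offers no proof of this proposition --- it is quoted from Stanley's \emph{Two poset polytopes} --- so there is nothing internal to compare against; your argument is the standard one for this fact, and all three parts (the chain/min/max reduction for the set equality, the perturbed points for the min/max constraints, and the adjacent-transposition trick for cover inequalities) go through. One small imprecision worth tightening: the contraction of $a \lessdot b$ is not literally ``a well-defined poset'' under the naive induced relation, since that relation can fail transitivity (e.g.\ in the poset with relations $a<b$, $x<b$, $a<y$, contracting gives $x \le z$ and $z \le y$ but not yet $x \le y$); one must pass to the transitive closure, and the cover condition is what guarantees the resulting digraph is acyclic --- any would-be cycle through the contracted element forces either $b \le a$ or an element strictly between $a$ and $b$. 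With that repair the lift to a linear extension with $\ell(b)=\ell(a)+1$ works exactly as you describe, and your case check that the swap preserves all other cover inequalities is sound because any other cover touching $a$ or $b$ sits at $\ell$-distance at least $2$ from the swapped level.
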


As hinted in Example \ref{ex:all 2d}, we can give an similar irredundant hyperplane representation for order polytopes of signed posets.  First, we need a few propositions and definitions. The next proposition from \cite{R93} was briefly described earlier in Example \ref{ex:bidi minrep}.

\begin{prop}[Reiner \cite{R93}]
Let $P$ be a signed poset. Then there exists a unique minimal subset of $P$, called the \emph{minimal representation} of $P$ and denoted $\minrep(P)$,  such that 
\[\PLC{\minrep(P)} \ = \ P \, .\]
\end{prop}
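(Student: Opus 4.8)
The plan is to prove existence and uniqueness of the minimal representation $\minrep(P)$ separately, leveraging the positive linear closure axiom and a notion of redundancy.

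First I would establish a working notion of which elements are forced. Call $\alpha \in P$ \emph{decomposable} (relative to $P$) if $\alpha \in \PLC{P \setminus \{\alpha\}}$, i.e. if $\alpha$ is a positive linear combination of other elements of $P$; otherwise call it \emph{extremal}. The natural candidate for $\minrep(P)$ is the set $E$ of extremal elements. To show $\PLC{E} = P$, I would argue that repeatedly deleting decomposable elements does not shrink the positive linear closure: if $\alpha$ is decomposable then $\PLC{P \setminus \{\alpha\}} = \PLC{P}= P$ (using property (2) of Definition of a signed poset). Since $P \subseteq B_n$ is finite, this deletion process terminates, and I must check that the resulting set is exactly $E$ regardless of deletion order, and that every element of $E$ survives (an extremal element of $P$ cannot become decomposable after deletions, since $\PLC{}$ of a smaller set is contained in $\PLC{}$ of the original). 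Thus $\PLC{E} = P$ and $E$ is a representation; minimality in the sense of $\PLC{}$ follows because no extremal element lies in the positive closure of the others.

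The heart of the argument, and the step I expect to be the main obstacle, is \emph{uniqueness}. I would show any minimal representation $R$ (minimal meaning no proper subset has the same positive linear closure) must equal $E$. One inclusion is easy: an element of $R$ cannot be decomposable within $P$, for if $\alpha \in R$ were a positive combination of elements of $P = \PLC{R}$, then $\alpha$ would be a positive combination of positive combinations of $R \setminus \{\alpha\}$, contradicting minimality of $R$; hence $R \subseteq E$. For the reverse, $E \subseteq R$, the subtle point is that positive linear combinations allow rescaling, so an extremal element $\alpha$ might a priori be recovered as $\alpha = c\beta$ for a single $\beta \in R$ with $c > 0$, or as a genuine multi-term combination. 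Here I would use the structure of $B_n$: the elements are primitive integer vectors (up to the short roots $\pm e_i$ and long roots $\pm e_i \pm e_j$), and antisymmetry (property (1)) rules out $\alpha$ and $-\alpha$ both appearing. The key lemma is that if $\alpha \in \PLC{R}$ with $\alpha$ extremal in $P$, the only way to write $\alpha$ as a positive combination is the trivial one $\alpha = 1 \cdot \alpha$, forcing $\alpha \in R$. This requires showing no nontrivial positive relation can reproduce an extremal ray's generator, which follows from $\alpha$ spanning an extreme ray of the cone $\PLC{P}$.

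I would therefore recast the problem geometrically: let $C := \PLC{P}$, the polyhedral cone generated by $P$. An element $\alpha \in P$ is extremal exactly when $\R_{\geq 0}\alpha$ is an extreme ray of $C$, and the extreme rays of a cone are precisely the rays that any generating set must contain (up to positive scaling). Since all elements of $P$ are lattice vectors and, crucially, each extreme ray of $C$ contains at most one element of $P$ by antisymmetry together with the fact that $B_n$ contains no two positive scalar multiples of the same primitive vector other than a vector and its negative, the extreme rays biject with the extremal elements $E$. Hence every generating set $R \subseteq P$ contains $E$, and minimality forces $R = E$. This simultaneously yields existence ($E$ generates $C \cap \Phi = P$) and uniqueness, completing the proof; the one technical check to dispatch carefully is that distinct elements of $B_n$ never lie on a common ray unless they are negatives of each other, which antisymmetry then excludes from $P$.
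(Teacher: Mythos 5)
The paper itself offers no proof of this proposition --- it is imported from Reiner \cite{R93} (the paper only points to the discussion on p.~329) --- so your argument has to stand on its own. Its skeleton is the natural one and is mostly sound: defining $E$ as the extremal (non-decomposable) elements, noting that deleting a decomposable element does not change the positive linear closure, identifying $E$ with the extreme rays of $C := \mathrm{cone}(P)$, and observing that no two distinct roots of $B_n$ are positive scalar multiples of one another, so each ray meets $P$ at most once. That last check, which you flag as the technical point to dispatch, is indeed needed and is correct.

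The genuine gap is \emph{pointedness} of $C$, which you never establish and which every geometric step silently assumes. The equivalence ``$\alpha$ extremal in $P$ iff $\R_{\geq 0}\alpha$ is an extreme ray of $C$,'' the fact that every generating set must contain a generator of each extreme ray, and even the claim that your deletion process terminates at the same set regardless of order, are all false for cones with nonzero lineality --- such cones have no extreme rays and admit genuinely different minimal generating sets. Antisymmetry alone does not rule this out: the set $S = \{e_1, \, e_2, \, -e_1+e_2, \, -e_1-e_2\} \subset B_2$ satisfies property (1), yet $\mathrm{cone}(S) = \R^2$ and both $R_1 = \{e_1, -e_1+e_2, -e_1-e_2\}$ and $R_2 = \{e_1, e_2, -e_1-e_2\}$ are minimal subsets with $\PLC{R_1} = \PLC{R_2} = \PLC{S}$, so uniqueness fails. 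The missing ingredient is axiom (2), used for more than the substitution/idempotence you invoke: if $\sum_i c_i \alpha_i = 0$ were a nontrivial positive relation among distinct elements of $P$ (necessarily with at least two terms, since roots are nonzero), then $-\alpha_1 = \sum_{i \geq 2} (c_i/c_1)\, \alpha_i$ would be a root that is a positive combination of elements of $P$, hence lies in $\PLC{P} = P$, contradicting property (1). So $C$ contains no line. Inserting this two-line lemma before your geometric recasting repairs the proof; note that the $c_\alpha \geq 1$ branch of your ``positive combination of positive combinations'' substitution in the uniqueness argument also needs exactly this pointedness to conclude.
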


Note that the relations in $\minrep(P)$ are analogous to the cover relations of a classical poset, since both give an irredundant description of the (signed) poset up to transitivity.  We now give a definition for two types of maximal elements of a signed poset.  

\begin{defin}
Let $P$ be a signed poset on $[n]$. 
We say $i \in [n]$ is a \emph{positive maximal element} of $P$ if and only if every
relation adjacent to $i$ is of the form $e_i$ or $e_i \pm e_j$, i.e., the $e_i$ portion of the adjacent relations is positive. We denote the set of positive maximal elements of $P$ as $\pmax(P)$. 
We say $i \in [n]$ is a \emph{negative maximal element} of $P$ if and only if every relation adjacent to $i$ is of the form $-e_i$ or $-e_i \pm e_j$, i.e., the $e_i$ portion of the adjacent relations is negative. We denote the set of negative maximal elements of $P$ as $\nmax(P)$. 
\end{defin}

We can now give an irredundant representation of $\op{P}$.

\begin{prop}
Let $P$ be a signed poset on $[n]$.
Then an irredundant representation of $\op{P}$ is given by
\begin{equation}\label{eq:irrrep}
  \op{P} \ = \ \left\{x \in \R^n: \begin{array}{ll}
        \langle \alpha, x\rangle \geq 0 & \text{if } \alpha \in \minrep(P)\\
        x_i \leq 1 & \text{if } i \in \pmax(P) \\
        x_i \geq -1 & \text{if } i \in \nmax(P)
        \end{array}\right\}.
\end{equation}
\end{prop}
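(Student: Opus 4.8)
The plan is to prove the asserted set equality \eqref{eq:irrrep} first and then verify that the representation is irredundant. One inclusion is immediate: each inequality on the right-hand side of \eqref{eq:irrrep} is already among the inequalities defining $\op{P}$ (the relations $\langle\alpha,x\rangle\ge 0$ for $\alpha\in\minrep(P)\subseteq P$, and the bounds $x_i\le 1$, $x_i\ge -1$ coming from $[-1,1]^n$), so $\op{P}$ is contained in the right-hand side. For the reverse inclusion I take a point $x$ satisfying the right-hand system and recover the full defining system of $\op{P}$. The relation constraints are easy: since $\PLC{\minrep(P)}=P$, every $\beta\in P$ is a positive combination of elements of $\minrep(P)$, so $\langle\beta,x\rangle\ge 0$ follows by linearity. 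The substantive point is to recover the box $x\in[-1,1]^n$ from the $\pmax$/$\nmax$ bounds together with the relations.

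For the box I argue by contradiction on $M:=\max_i|x_i|$, assuming $M>1$, and set $T:=\{\,i:|x_i|=M\,\}$. For $i\in T$ write $\tilde e_i:=\operatorname{sign}(x_i)\,e_i$. If $x_i=M>0$ then $i\notin\pmax(P)$ (otherwise $x_i\le 1<M$), so $P$ contains a relation with negative $e_i$-coefficient; a short case check on the possible forms $-e_i,\,-e_i\pm e_j$ shows that, by maximality of $M$, this relation must have the shape $-\tilde e_i+\tilde e_j$ with $j\in T$ (the loop $-e_i$ is excluded since it forces $x_i\le 0$). The case $x_i=-M<0$ is symmetric using $\nmax(P)$. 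This yields a function $j(\cdot)$ on the finite set $T$ together with relations $\alpha_i:=-\tilde e_i+\tilde e_{j(i)}\in P$ and $j(i)\ne i$. Iterating $j$ produces a cycle $i_0\to i_1\to\cdots\to i_{k-1}\to i_0$ with $k\ge 2$, and the associated relations telescope: $\sum_l\alpha_{i_l}=0$. Hence $\alpha_{i_0}=-\sum_{l\ge 1}\alpha_{i_l}\in-\PLC{P}=-P$, so $-\alpha_{i_0}\in P$ while $\alpha_{i_0}\in P$, contradicting axiom (1) of a signed poset. Thus $M\le 1$. I expect this acyclicity step, extracting a cycle and combining $\PLC{P}=P$ with antisymmetry, to be the crux of the whole argument.

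For irredundancy, since $\op{P}$ is full-dimensional by Proposition~\ref{prop: full-dim}, it suffices to produce, for each listed inequality, a point that violates only that inequality. For a bound $x_i\le 1$ with $i\in\pmax(P)$, I start from an interior point $y$ of $\op{P}$ and move in the $+e_i$ direction; since every relation meeting $i$ has positive $e_i$-coefficient, all relation constraints are preserved (indeed strengthened) and no other box constraint is affected, so $y+te_i$ for suitable $t$ violates only $x_i\le 1$. The bounds $x_i\ge -1$ for $i\in\nmax(P)$ are handled symmetrically. For a relation $\alpha\in\minrep(P)$, minimality of $\minrep(P)$ gives $\alpha\notin\PLC{\minrep(P)\setminus\{\alpha\}}$, so by Farkas' lemma there is $y$ with $\langle\beta,y\rangle\ge 0$ for all $\beta\in\minrep(P)\setminus\{\alpha\}$ yet $\langle\alpha,y\rangle<0$; scaling $y$ by a small positive factor keeps it inside the open box, so the scaled point violates only $\langle\alpha,x\rangle\ge 0$. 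Together these show every inequality in \eqref{eq:irrrep} is necessary.
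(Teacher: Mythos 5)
Your proof is correct, but the core step --- recovering the box constraints $-1\le x_i\le 1$ from the reduced system --- takes a genuinely different route from the paper. The paper argues by induction on $n$: it deletes an element, runs an exhaustive case analysis using transitivity, and proves the structural statement that every $i\in[n]$ satisfies one of (i) $i\in\pmax(P)$, (ii) $-e_i\in P$, (iii) $-e_i+e_p\in P$ for some $p\in\pmax(P)$, (iv) $-e_i-e_m\in P$ for some $m\in\nmax(P)$ (plus the mirrored list for the lower bound), so that each bound $x_i\le 1$ is enforced by a \emph{single explicit relation} to a maximal element. Your extremal argument instead assumes a point of the reduced system has $M:=\max_i|x_i|>1$, shows each coordinate attaining $M$ must pass the maximum along a relation $-\tilde e_i+\tilde e_{j(i)}\in P$ to another such coordinate, extracts a cycle in the finite set $T$, and telescopes the cycle relations to $0$, contradicting antisymmetry via $\PLC{P}=P$. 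This is shorter, avoids induction entirely, and uses only the two signed-poset axioms; the paper's longer route buys the reusable structural dichotomy above, which localizes exactly which relation certifies each bound. On irredundancy your treatment is, if anything, more complete than the paper's: for $i\in\pmax(P)$ your perturbation along $+e_i$ is the same observation the paper makes (every constraint adjacent to $i$ bounds $x_i$ only from below), but for $\alpha\in\minrep(P)$ the paper merely asserts necessity ``from the definition of $\minrep(P)$,'' whereas you supply the actual separation: minimality gives $\alpha\notin\operatorname{cone}\bigl(\minrep(P)\setminus\{\alpha\}\bigr)$ (otherwise the positive linear closure of the smaller set would still be all of $P$), and since finitely generated cones are closed, Farkas yields a witness which, after scaling into the open cube, violates only that one inequality. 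One small point worth making explicit in a final write-up: your step ``$i\notin\pmax(P)$ forces an adjacent relation with negative $e_i$-part'' silently uses that an element with \emph{no} adjacent relations lies vacuously in $\pmax(P)\cap\nmax(P)$, which is consistent with the paper's definition but deserves a sentence.
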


\begin{proof}
We first show that \eqref{eq:irrrep} does indeed describe $\op{P}$.  Since \eqref{eq:irrrep} consists of a subset of the inequalities listed in  Definition \ref{def:signedgeo}, it suffices to show that each inequality in Definition \ref{def:signedgeo} is implied by the inequalities listed in \eqref{eq:irrrep}. First of all, each inequality of the form $\langle\alpha, x\rangle \geq 0$  for $\alpha \in P$ is implied by the inequalities of the form $\langle \alpha, x\rangle \geq 0$ for $\alpha \in \minrep(P)$ from the definition of $\minrep(P)$. 

We now show that the inequalities $-1\leq x_i \leq 1$ for all $i \in [n]$ are implied by the inequalities listed in the proposition.  It suffices to show that for each $i \in [n]$, one of the following holds (to ensure that $x_i \leq 1$):
\begin{enumerate}[(i)]
    \item $i \in \pmax(P)$,
    \item $-e_i \in P,$ since $x_i \leq 0$ implies $x_i \leq 1$,  
    \item $-e_i +e_p \in P$, where $p \in \pmax(P)$,
    \item $-e_i -e_n \in P$, where $n \in \nmax(P)$,
\end{enumerate}
and one of the following holds (to ensure that $-1 \leq x_i$):
\begin{enumerate}[(i)]
    \setcounter{enumi}{4}
    \item $i \in \nmax(P)$,
    \item $e_i \in P,$ since $x_i \geq 0$ implies $x_i \geq -1$,  
    \item $e_i +e_p\in P$, where $p \in \pmax(P)$,
    \item $e_i -e_n \in P$, where $n \in \nmax(P)$.
\end{enumerate}

We show by induction on $n$ that this is always true. It is true for $n=1$ since in that case one of (i) or (ii) must be true, and one of (v) or (vi) must be true.   

Now, consider some $i \in P$, and let $P'$ be the signed poset on $[n]-\{i\}$ with all relations adjacent to $i$ removed (similarly as in the proof of Proposition \ref{prop:fulldim}).  We first show that $i$ satisfies (i), (ii), (iii), or (iv).  Assume that $i$ does not satisfy (i) or (ii), with the aim to show that $i$ satisfies (iii) or (iv). By our assumptions, $i$ must be adjacent to a relation of the form $-e_i + e_j$ or $-e_i - e_j$.  

\textbf{Case 1:} Assume $i$ is adjacent to a relation of the form $-e_i + e_j$.  We know by our inductive hypothesis that within $P'$, $j$ must satisfy (i), (ii), (iii), or (iv). We go through each of these subcases.
\begin{itemize}
\item Suppose $j$ satisfies (i) and   is thus an element of $\pmax(P')$.  We show that $j$ is also an element of $\pmax(P)$.  The only obstruction to this is if $-e_i -e_j$ or $e_i-e_j$ are in $P$.  However, $e_i-e_j$ cannot be in $P$ since its opposite $-e_i+e_j$ was already assumed to be in $P$.  Suppose $-e_i-e_j$ is in $P$.  Then, by transitivity with $-e_i+e_j$, we have that $-e_i$ is in $P$ which is not the case since we assumed $i$ does not satisfy (ii). Thus, $j$ is an element of $\pmax(P)$, so $i$ satisfies (iii).  
\item Now suppose $j$ satisfies (ii), so $-e_j$ is an element of $P'$ and thus $P$.  However, by transitivity with $-e_i + e_j$, $-e_i$ is in $P$, which again is not possible by our assumptions. 
\item Now suppose $j$ satisfies (iii).  Then $-e_j+e_p$ is in $P'$ and thus $P$, where $p$ is in $\pmax(P')$.  We first note that by transitivity, $-e_i + e_p$ is in $P$.  We now show that $p$ is not only in $\pmax(P')$, but also in $\pmax(P)$. The only obstructions to this are if $-e_i -e_p$ or $e_i - e_p$ are in $P$.  Note that $e_i-e_p$  cannot be in $P$ since we already established that its opposite, $-e_i+e_p$, is in $P$.  We also see that $-e_i-e_p$ cannot be in $P$ since by transitivity then $-e_i$ would be in $P$, which we assumed was not the case.  Thus, $p \in \pmax(P)$ and so $i$  satisfies (iii).  
\item Now suppose $j$ satisfies (iv).  Then $-e_j-e_n$ is in $P'$ and thus $P$, for some $n$ in $\nmax(P')$.  By transitivity, we see that $-e_i -e_n$ is in $P$.  It suffices to show that $n$ is not only in $\nmax(P')$ but also in $\nmax(P)$.  The only obstructions to this are if $-e_i +e_n$ or $e_i+e_n$ are in $P$.  We see that $e_i +e_n$ cannot be in $P$ because we already established that that its opposite $-e_i - e_n$ is. If $-e_i + e_n$ is in $P$, then by transitivity with $-e_i - e_n$, $-e_i$ is in $P$, which we assumed was not the case.  So $n$ is in $\nmax(P)$ and thus $i$ satisfies (iv).  
\end{itemize}

\textbf{Case 2:} Assume $i$ is a adjacent to a relation of the form $-e_i -e_j$.  We know by our inductive hypothesis that within $P'$, $j$ must satisfy (v),(vi),(vii), and (viii).  We go through each of these subcases and show that each case implies $i$ satisfies (iii) of (iv).  

\begin{itemize}
    \item Suppose $j$ satisfies (v) and thus is an element of $\nmax(P')$. We show that $j$ is also an element of $\nmax(P)$. The only possible obstructions to this are $-e_i +e_j$ or $e_i+e_j$ being in $P$.  We know its impossible for $e_i+e_j$ to be in $P$ since its opposite is.  If $-e_i+e_j$ were to be in $P$, then by transitivity $-e_i$ would be in $P$ which we assumed was not the case much earlier. So $j$ is an element of $\nmax(P)$ and thus $i$ satisfies (iv).
    \item Suppose $j$ satisfies (vi) and thus $e_j$ is in $P'$ and thus $P$.  However, by transitivity with $-e_i - e_j$ this implies that $-e_i$ is in $P$, which we earlier assumed was not the case.  
    \item Suppose $j$ satisfies (vii) and thus $e_j + e_p$ is in $P$, where $p \in \pmax(P')$.  By transitivity, we see that $-e_i +e_p\in P$. It suffices to show that $p$ is in $\pmax(P)$.  Suppose not.  Then either $e_i -e_p$ or $-e_i-e_p$ is in $P$. We see that $e_i-e_p$ cannot be in $\pmax(P)$ since its opposite $-e_i +e_p$ was already shown to be in $P$. If $-e_i-e_p$ is in P, then by transitivity with $-e_i+e_p$ we get that $-e_i$ is in $P$, which we assumed much earlier not to be the case. Thus, $p$ is in $\pmax(P)$ and thus $i$ satisfies (iii).
    \item Suppose $j$ satisfies (viii) and $e_j - e_n$ is in $P$, where $n$ is in $\nmax(P')$.  By transitivity, we know that $-e_i-e_n$ is in $P$.  To show $i$ satisfies (iv), it suffices to show that $n$ is in $\nmax(P)$.  The only obstructions to this are $-e_i+e_n$ or $e_i+e_n$ being in $P$.  We see that $e_i+e_n$ cannot be in $P$ because we already established that its opposite is.  Suppose that $-e_i+e_n$ is in $P$.  Then by transitivity with $-e_i-e_n$, we know that  $-e_i$ is in $P$, which we already assumed was not the case.  Thus, $n$ must be in $\nmax{P}$ and so $i$ satisfies (iv).  
    
\end{itemize}

Thus, in all cases, (i) must satisfy at least one of (i), (ii), (iii), or (iv). The proof that the $i$ must satisfy at least one of (v), (vi), (vii), or (viii) is similar.  

We have thus shown that the given inequalities in this proposition do describe $\op{P}$.  We now argue that this set of inequalities is a minimal set describing $\op{P}$. From the definition of $\minrep(P)$, we see that we cannot remove any of the hyperplanes of the form $\langle\alpha, x\rangle\geq 0$ without changing $\op{P}$.  Now let $i \in \pmax(P)$.  All of the adjacent relations to $i$ must be of the form $e_i-e_j$, $e_i+e_j$, or $e_i$. So all of restrictions on $x_i$ coming from rest of the poset are of the form $x_i\geq x_j$, $x_i \geq -x_j$ and $x_i \geq 0$.  No combination of these can imply $x_i\leq 1$, so this inequality is necessary in this description of~$\op{P}$.  
\end{proof}

 In order to describe a convex hull description of $\op{P}$, we first need the idea of filters. For classical posets, they are defined as follows, as appears in, e.g., \cite{B18}.

\begin{defin}
Let $\Pi$ be a classical poset.  Then, a filter is a subset $F$ of $\Pi$ such that for $a\leq_\Pi b $, $a\in F$ implies $b\in F$.
\end{defin}

The filters give a vertex description of $K_\Pi$ through the following
proposition, which again appeared in \cite{S86}.

\begin{prop}[Stanley~\cite{S86}]
The vertex set of $\mathcal{O}(\Pi)$ is the set of all $e_F$, where $F$ is a filter of $\Pi$ and $e_F: \Pi \to \{0,1\}$ is the point in $\R^\Pi$ given by 
\[e_F(a) \ = \ \left\{\begin{array}{ll}
       1 & \text{if } a \in F,\\
      0 & \text{otherwise.}
        \end{array}\right. \]
\end{prop}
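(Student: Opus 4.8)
The plan is to establish two things: every point of $\mathcal{O}(\Pi)$ is a convex combination of the indicator vectors $e_F$, which forces every vertex to be one of them; and each $e_F$ is genuinely a vertex, which follows because $\mathcal{O}(\Pi)$ lies inside the unit cube. First I would characterize the $0/1$-points of the polytope. For a subset $S \subseteq \Pi$ let $e_S$ be its indicator vector; the box constraints $0 \leq x_p \leq 1$ hold automatically for $e_S$, while the order constraint $x_a \leq x_b$ for $a \leq_\Pi b$ translates into the requirement that $a \in S$ implies $b \in S$. Hence $e_S \in \mathcal{O}(\Pi)$ precisely when $S$ is a filter, so the filter-indicators are exactly the $0/1$-points of $\mathcal{O}(\Pi)$.

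The substantive step is a ``layer-cake'' decomposition of an arbitrary point. The key observation is that for any threshold $t$ the super-level set $G_t := \{p \in \Pi : x_p \geq t\}$ is a filter: if $a \leq_\Pi b$ and $a \in G_t$, then $x_b \geq x_a \geq t$ using the defining inequality $x_a \leq x_b$, so $b \in G_t$. Given $x \in \mathcal{O}(\Pi)$, I would list the distinct \emph{positive} coordinate values $u_1 < u_2 < \cdots < u_r$ (all lying in $(0,1]$), set $u_0 := 0$, and put $G_i := \{p : x_p \geq u_i\}$. A direct telescoping check, evaluating both sides at a coordinate $p$ with $x_p = u_j$, yields
\[
  x \ = \ \sum_{i=1}^{r} (u_i - u_{i-1})\, e_{G_i} .
\]
The coefficients are positive and sum to $u_r \leq 1$, so assigning the remaining weight $1 - u_r$ to the empty filter (for which $e_\emptyset$ is the zero vector) writes $x$ as a convex combination of filter-indicators. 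This gives $\mathcal{O}(\Pi) = \conv\{e_F : F \text{ a filter of } \Pi\}$, and in particular every vertex of $\mathcal{O}(\Pi)$ is some $e_F$.

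It remains to confirm that each $e_F$ is actually a vertex rather than a redundant point. Here I would use that $\mathcal{O}(\Pi) \subseteq [0,1]^\Pi$: the point $e_F$ is a vertex of the cube, so there is a linear functional attaining its maximum over $[0,1]^\Pi$ uniquely at $e_F$; restricted to the smaller set $\mathcal{O}(\Pi)$ that same functional is still maximized uniquely at $e_F$, which is therefore a vertex. The only place requiring care is the telescoping identity and the accounting of which $G_i$ contain a given element; once that bookkeeping is set up correctly, the two membership and vertex arguments are routine.
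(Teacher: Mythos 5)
Your proof is correct, but it is worth noting how it relates to what this paper actually does: the proposition is quoted from Stanley~\cite{S86} without proof, and the closest in-paper argument is the proof of the signed analogue, Proposition~\ref{prop:signed filters}, which establishes the convex-hull statement by a different route --- restricting the triangulation of the cube $[-1,1]^n$ by signed-permutation simplices $\Delta_\sigma$ to the polytope, so that any point lies in a simplex whose vertices are $\{0,\pm 1\}$-points and hence signed filters. Your layer-cake decomposition $x = \sum_{i=1}^{r}(u_i - u_{i-1})\,e_{G_i}$ (plus weight $1-u_r$ on the empty filter) is instead Stanley's original, elementary argument; the telescoping check is right, the super-level sets $G_t$ are indeed filters, and the bookkeeping works since $p \in G_i$ exactly when $i \leq j$ for $x_p = u_j$. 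Each approach buys something: the triangulation machinery is uniform, transfers verbatim to the signed setting, and is reused in Section~\ref{sec:hstar} to compute $h^*$-polynomials, whereas your decomposition is self-contained and, importantly, you supply the converse direction --- that every $e_F$ genuinely is a vertex, via a functional uniquely maximized at the cube vertex $e_F$ and a fortiori over the subpolytope $\mathcal{O}(\Pi)$. That step is not a formality: it is precisely the part of the classical statement that \emph{fails} for signed posets (the paper's remark after Proposition~\ref{prop:signed filters} notes the origin is always a signed filter but often not a vertex), and the triangulation argument alone would not deliver it. So your proof is complete and, if anything, proves the sharper vertex-set statement that the signed analogue cannot.
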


Note that the filters are exactly the points $x \in \{0,1\}^\Pi$ such that
$\langle\alpha, x\rangle\geq 0$ for all $\alpha \in f(\Pi)$.  (Recall that $f$
is the map from a classical poset to its corresponding subset of the type-$A$ root system).  This observation allows for a similar definition for signed posets.  The following definition appears in \cite{R93}.

\begin{defin}
Let $P$ be a signed poset on $[n]$.  Then $x \in \{-1,0,1\}^n$ is a \emph{signed filter} of $P$ if $\langle\alpha, x\rangle\geq 0$ for all $\alpha \in P$.  
\end{defin}

Note that in \cite{R93}, due to the earlier mentioned difference in convention
(up to a sign) in defining the map from classical posets to subsets of the
type-$A$ root system, Reiner calls these objects signed order ideals instead of signed filters.  

We can now give a convex hull description of $\op{P}$.  

\begin{prop}\label{prop:signed filters}
Let $P$ be a signed poset on $[n]$ and let $F$ be the set of signed filters on $P$.  Then $\op{P} = \conv(F)$.
\end{prop}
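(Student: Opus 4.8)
The plan is to prove the two inclusions separately, with the reverse inclusion carrying all the content. The inclusion $\conv(F) \subseteq \op{P}$ is immediate: every signed filter $x \in F$ satisfies $\langle \alpha, x\rangle \geq 0$ for all $\alpha \in P$ by definition and lies in $[-1,1]^n$ because its coordinates are in $\{-1,0,1\}$; hence $F \subseteq \op{P}$, and taking convex hulls of both sides (using that $\op{P}$ is convex) gives $\conv(F) \subseteq \op{P}$.

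For the reverse inclusion $\op{P} \subseteq \conv(F)$, I would use a signed analogue of Stanley's ``layer-cake'' decomposition. Given $v \in \op{P}$, define for each threshold $t \in (0,1)$ a point $\chi^t \in \{-1,0,1\}^n$ by $\chi^t_i := g_t(v_i)$, where $g_t \colon \R \to \{-1,0,1\}$ is the odd step function equal to $-1$ on $(-\infty,-t]$, to $0$ on $(-t,t)$, and to $1$ on $[t,\infty)$; equivalently $\chi^t_i = \operatorname{sign}(v_i)$ when $|v_i| \geq t$ and $\chi^t_i = 0$ otherwise. Because $|v_i| \leq 1$, a direct computation gives $\int_0^1 g_t(v_i)\,dt = v_i$ for every $i$, so $v = \int_0^1 \chi^t\,dt$. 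As $t$ ranges over $(0,1)$ the vector $\chi^t$ takes only finitely many values, with breakpoints at $t = |v_i|$, so this integral is literally a convex combination of finitely many of the $\chi^t$, the weights being the lengths of the intervals on which $\chi^t$ is constant (which sum to $1$). Thus, once we know each $\chi^t \in F$, we conclude $v \in \conv(F)$.

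It therefore remains to check that each $\chi^t$ is a signed filter, i.e.\ that $\langle \alpha, \chi^t\rangle \geq 0$ for every $\alpha \in P$; this is the step I expect to require the most care, since it must be verified across all six shapes of type-$B$ roots. The unifying principle that makes it work is that $g_t$ is simultaneously odd and non-decreasing: oddness absorbs the sign patterns coming from negated coordinates and from the negative loops $-e_i$, while monotonicity transports the inequality $\langle\alpha,v\rangle\geq 0$ to $\langle\alpha,\chi^t\rangle\geq 0$. Concretely, for $e_i - e_j \in P$ one has $v_i \geq v_j$, so monotonicity gives $\chi^t_i \geq \chi^t_j$; for $e_i + e_j \in P$ one has $v_i \geq -v_j$, and combining monotonicity with $-g_t(v_j) = g_t(-v_j)$ gives $\chi^t_i \geq -\chi^t_j$; the roots $-e_i + e_j$ and $-e_i - e_j$ are handled symmetrically; and for the loops $e_i, -e_i \in P$ one uses that $g_t(v_i) \geq 0$ whenever $v_i \geq 0$ (respectively $g_t(v_i) \leq 0$ whenever $v_i \leq 0$).

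In every case $\langle \alpha, \chi^t\rangle \geq 0$, so $\chi^t \in F$, and the decomposition above exhibits $v$ as a convex combination of signed filters. This yields $\op{P} \subseteq \conv(F)$, and together with the first inclusion completes the proof. I note that this argument is self-contained and sidesteps the fact that the matrix of type-$B$ roots fails to be totally unimodular (so that a naive ``integral vertices'' argument is not immediately available); the threshold construction instead produces the required signed filters by hand.
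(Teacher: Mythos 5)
Your proof is correct, and it takes a genuinely different route from the paper's. The paper proves $\op{P} \subseteq \conv(F)$ geometrically, via the type-$B$ Coxeter triangulation $\mathcal{T}$ constructed just before the proposition: since the facet hyperplanes of $\op{P}$ are among the hyperplanes of the triangulation of $[-1,1]^n$ into the simplices $\Delta_\sigma$, any $x \in \op{P}$ lies in some maximal cell $\Delta_\omega$ of $\mathcal{T}$ (with $\omega \in \JH(P)$), and is thus a convex combination of that simplex's vertices, which are $\{-1,0,1\}$-vectors contained in $\op{P}$ and hence signed filters. You instead generalize Stanley's classical layer-cake argument to the signed setting: writing $v = \int_0^1 \chi^t\,dt$ with the odd, non-decreasing threshold function $g_t$ and verifying root-by-root that each $\chi^t$ is a signed filter. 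I checked the verification: oddness of $g_t$ (which your half-open interval conventions do guarantee), together with monotonicity and $g_t(0)=0$, handles all six shapes of roots correctly, and the piecewise-constancy of $t \mapsto \chi^t$ makes the integral an honest finite convex combination. The paper's approach buys economy and extra structure: the triangulation is needed anyway for the $h^*$-computation in Section 4, and it simultaneously exhibits $\op{P}$ as a lattice polytope with a unimodular triangulation. Your approach buys self-containedness and explicitness: it needs no triangulation or Jordan--H\"older machinery, it produces each point as a convex combination of at most $n+1$ signed filters that form a nested family (shrinking supports with fixed signs, the signed analogue of Stanley's chain of filters), and it quietly avoids a small gloss in the paper's write-up, where the vertices of $\Delta_\omega$ are declared filters merely because their entries lie in $\{-1,0,1\}$ --- one also needs that $\Delta_\omega \subseteq \op{P}$, which holds since the cells of $\mathcal{T}$ are those $\Delta_\sigma$ with $\sigma \in \JH(P)$, but which the proof text does not explicitly invoke. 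Your closing remark is also apt: the type-$B$ constraint matrix is not totally unimodular, so the integrality of the relevant points must indeed come from a construction (yours by hand, the paper's from the Coxeter fan) rather than from a generic unimodularity argument.
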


\begin{rem}
	A signed filter may not necessarily be a vertex of $\op{P}$ (in contrast to classical
order polytopes).  For example $x = (0,\dots, 0)$ is always a signed filter of $\op{P}$,
but need not be a vertex (as seen in Figure \ref{fig:examples}).

\end{rem}

Before we give the proof of this proposition, we need to discuss a specific unimodular
triangulation of $\op{P}$ for any signed poset.  Along the way, we discuss a way of dividing $\R^n$ into cones indexed by elements of the signed permutation group $S_n^B$.

\begin{defin}\label{def:permcones}
Consider a signed permutation $\sigma = (\pi, \epsilon) \in S_n^B$.  The simplicial cone associated with $\sigma$ is
\[K_\sigma \ := \ \left\{x \in \R^n: \, 0\leq \epsilon_1x_{\pi_1} \leq \dots \leq
\epsilon_nx_{\pi_n} \right\}.\]
\end{defin}

These simplicial cones induce a triangulation on the $[-1,1]^n$ cube, as described in 
\cite[Proof of Theorem 6.9]{bb13}.  Each maximal face of this triangulation is of the form $\Delta_\sigma := \{x \in \R_n: 0\leq \epsilon_1x_{\pi_1} \leq \dots \leq \epsilon_nx_{\pi_n}\leq 1\}$.
Note that the defining hyperplanes of $\op{P}$ are a subset of the union of the set of hyperplanes defining the described triangulation of $[-1,1]^n$.  Thus, restricting this triangulation to $\op{P}$ gives a triangulation of $\op{P}$.  We will refer to this triangulation as $\mathcal{T}$.  Later, we will use the fact the maximal faces $\mathcal{T}$ are exactly the set $\Delta_\sigma$, where $\sigma \in \JH(P)$.

Now, we will use this triangulation to prove Proposition \ref{prop:signed filters}.

\begin{proof}
First, by definition, all the points in $F$ are in $\op{P}$, and thus the convex hull of $F$ is in~$\op{P}$. 

We now show that $\op{P} \subseteq \conv(F)$.  Suppose we have some point $x \in \op{P}$.
Then, $x$ must be in one of the full-dimensional simplices of $\mathcal{T}$, i.e., $x \in \Delta_\omega$ for some $\omega \in S_n^B$, and thus can be written as a convex combination of the vertices of $\Delta_\omega$.  These vertices have entries of $0,1,-1$, and thus are filters of~$P$.
\end{proof}

%
%
%
%
%
%
%
%
%
%
%
%
%


\section{Computing the $h^\ast$-polynomial of $\op{P}$}\label{sec:hstar}

In 1962, for a lattice polytope $P\subset \mathbb{Z}^d$, Ehrhart \cite{E67} introduced and
proved polynomiality of the counting function $\ehr_P(t)=|tP\cap\mathbb{Z^d}|$. Here the \emph{$t^{th}$ dilate of $P$} is $tP:=\{tp\in\R^n : p\in P\}$.
 \begin{thm}(Ehrhart's Theorem)\label{thm: ehrhart}
  For any $d$-dimensional lattice polytope $P\subseteq \R^n$, the quantity $\ehr_P(t)=|tP\cap\mathbb{Z}^n|$ agrees with a polynomial of degree $d$.  
 \end{thm}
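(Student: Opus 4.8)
The plan is to reduce the general case to that of a lattice simplex and then analyze the lattice-point count of a simplex through the generating function of the cone over it. First I would triangulate $P$ into $d$-dimensional lattice simplices using the vertices of $P$ (which are lattice points); the dilate $tP$ then decomposes as the union of the dilated simplices, which overlap only along lower-dimensional lattice polytopes. By inclusion--exclusion, $\ehr_P(t)$ is an alternating sum of $\ehr$-counts of the simplices in the triangulation and of their pairwise (and higher-order) intersections. Running an induction on dimension, the lower-dimensional intersection terms are already known to be polynomials, so it suffices to prove the theorem for a single lattice $d$-simplex $\Delta \subseteq \R^n$.

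For a simplex $\Delta = \conv(v_1, \dots, v_{d+1})$ with $v_i \in \Z^n$, I would homogenize, exactly as in the cone construction discussed earlier in the paper: lift each vertex to $\tilde v_i := (v_i, 1) \in \R^{n+1}$ and form the cone $C := \{\sum_{i=1}^{d+1} \lambda_i \tilde v_i : \lambda_i \geq 0\}$. By construction the lattice points of $C$ lying in the hyperplane $\{x_{n+1} = t\}$ are exactly the (lifted) lattice points of $t\Delta$, so $\ehr_\Delta(t) = |\{m \in C \cap \Z^{n+1} : m_{n+1} = t\}|$. The structural fact I would exploit is that every lattice point of $C$ is written uniquely as $p + \sum_i k_i \tilde v_i$ with $k_i \in \Z_{\geq 0}$ and $p$ in the half-open fundamental parallelepiped $\Pi_\Delta := \{\sum_i \lambda_i \tilde v_i : 0 \leq \lambda_i < 1\}$, which contains only finitely many lattice points. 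Recording the last coordinate with a variable $z$, this yields
\[
  \sum_{t \geq 0} \ehr_\Delta(t)\, z^t \ = \ \frac{\sum_{p \in \Pi_\Delta \cap \Z^{n+1}} z^{p_{n+1}}}{(1-z)^{d+1}} ,
\]
since each generator $\tilde v_i$ contributes a factor $(1-z)^{-1}$, its last coordinate being $1$.

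The denominator $(1-z)^{d+1}$ is the crux. Expanding $(1-z)^{-(d+1)} = \sum_s \binom{s+d}{d} z^s$ and multiplying by the polynomial numerator shows that $\ehr_\Delta(t)$ is a $\Z$-combination of the quantities $\binom{t-j+d}{d}$, each of which is a genuine polynomial in $t$ of degree $d$; because the numerator has degree at most $d$, the resulting polynomial formula is valid for every $t \geq 0$, not merely asymptotically. That the degree is exactly $d$ follows by identifying the leading coefficient with the normalized volume of $\Delta$, which is positive. Feeding this back into the inclusion--exclusion sum completes the induction on dimension.

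I expect the main obstacle to be the bookkeeping in the inclusion--exclusion step: one must verify that the overlaps arising in the triangulation are themselves lattice polytopes of strictly smaller dimension (so the inductive hypothesis applies) and that the alternating sum of the lower-degree intersection contributions does not cancel the degree-$d$ term coming from the top-dimensional simplices. A cleaner alternative that sidesteps inclusion--exclusion entirely is to use a \emph{half-open} triangulation partitioning $tP$ into disjoint half-open simplices, each of which still has a polynomial counting function by the same cone argument; I would keep that in reserve should the overlap bookkeeping become unwieldy.
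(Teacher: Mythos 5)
Your proof is correct, but there is nothing in the paper to compare it against: the paper states Ehrhart's theorem as a known result, citing \cite{E67}, and gives no proof of its own. Your argument is the standard modern proof (essentially the one in \cite{B18}): triangulate $P$ using only its vertices, lift each $d$-simplex $\Delta$ to the simplicial cone over $\Delta \times \{1\}$, decompose the lattice points of that cone uniquely via the half-open fundamental parallelepiped, read off the rational generating function with denominator $(1-z)^{d+1}$, and expand against $\binom{t-j+d}{d}$; since the numerator exponents $p_{n+1}$ lie in $\{0,\dots,d\}$, these binomial polynomials vanish at the small integers where the series coefficient is $0$, so the polynomial identity holds for all $t \geq 0$, as you note. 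Your handling of the two delicate points is also sound: nonempty intersections of maximal cells in a triangulation are common faces, hence lattice simplices of dimension strictly less than $d$, so the inclusion--exclusion terms are covered by induction (or directly by the same cone argument in lower dimension), and they cannot cancel the degree-$d$ term because each top-dimensional simplex contributes a positive leading coefficient ($h_0 \geq 1$ since the origin lies in every fundamental parallelepiped). Worth noting: the ``half-open triangulation'' alternative you keep in reserve is not merely a cleaner variant --- it is exactly the mechanism this paper deploys in Section~\ref{sec:hstar} (Lemma~\ref{lem: disjoint union} and Proposition~\ref{prop: half open simplex}) to write $\op{P}$ as a disjoint union of half-open unimodular simplices and compute $h^*$-polynomials, so adopting it would align your proof with the machinery the paper already builds.
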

 This \emph{Ehrhart polynomial} can be seen as a discrete measure of volume.  In general, the coefficients of the Ehrhart polynomial of a lattice polytope are rational numbers that can be negative. The generating series $\mathrm{Ehr}(P;z)$ of a $d$-dimensional lattice polytope
 can be written as 
\[\mathrm{Ehr}(P;z) = 1 + \sum_{t\geq 1} \ehr_P(t)z^t = \frac{h^*_0 + h^*_1 z + \dots +
h^*_{d+1}z^{d}}{(1-z)^{d+1}} \, . \]
The numerator of this expression is called the \emph{$h^*$-polynomial} of $P$ and denoted $h^*(P;z)$.  It can be helpful to view the information encoded by the Ehrhart polynomial in this form. For example, unlike the Ehrhart polynomial,  the coefficients of the $h^*$-polynomial of a lattice polytope are always non-negative integers \cite{stanley1980decompositions}. Thus, one area of research in Ehrhart theory is to give combinatorial interpretations of these coefficients for specific families of polytopes.
For classical order polytopes,  we can describe the $h^*$-polynomial in terms of
permutation statistics in the following way, as outlined, e.g, in
\cite[Chapter~5]{B18}.  The
Jordan--H\"{o}lder set of a classical poset is defined similarly as for a
signed poset.  

\begin{defin}\label{def: classic jh}
Let $\Pi$ be a classical poset on $[n]$.  The Jordan-H\"{o}lder set of $\Pi$ is the set of permutations $\omega \in S_n$ that are order-preserving, that is, $\langle\alpha,\omega \rangle\geq 0$ for all $\alpha \in f(\Pi)$, where $f$ is the map described in Proposition \ref{prop:coxposet} and where we think of  $\omega$ as the point $(\omega_1, \dots, \omega_n) \in \R^n$.  
\end{defin}

\begin{defin}
Let $\omega = \omega_1\omega_2\dots\omega_n \in S_n$. A \emph{descent} is a position $i$ such that $\omega_i>\omega_{i+1}$.  The \emph{descent set} of $\omega$ is $\mbox{Des}(\omega):=\{i:i \mbox{ is a descent of } \omega\}$, and the \emph{descent statistic} of $\omega$ is $\mbox{des}(\omega):=|\mbox{Des}(\omega)|$.
\end{defin}

See, for example,  \cite[Section~3.12]{stanleyec1} for a more complete discussion of permutation statistics and other equivalent definitions of the classical Jordan--H\"{o}lder set, relating for linear extensions of posets. 

\begin{prop}\label{prop: classic h star}
	Let $\Pi$ be a naturally labeled poset with Jordan--H\"{o}lder set $\JH(\Pi) \subset S_n$.  Then
\[h^*_{O_\Pi}(z) \ = \sum_{\tau \in JH(\Pi)}z^{\mathrm{des}(\tau)}.\]
\end{prop}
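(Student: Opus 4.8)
The plan is to compute the Ehrhart generating series of $\op{\Pi}$ directly via the unimodular triangulation $\mathcal{T}$ described before Proposition~\ref{prop:signed filters}, adapted to the classical case. Recall that the maximal simplices of $\mathcal{T}$ are exactly the $\Delta_\sigma$ for $\sigma \in \JH(\Pi)$. Since $\mathcal{T}$ is a \emph{unimodular} triangulation of $\op{\Pi}$, the $h^*$-polynomial of $\op{\Pi}$ equals the $h$-polynomial of the triangulation, and the latter can be read off by summing a contribution $z^{j}$ over each simplex, where $j$ tracks the relevant statistic. So the core of the proof reduces to two things: (i) identifying the right per-simplex statistic, and (ii) matching it with $\mathrm{des}(\tau)$.

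First I would set up the half-open decomposition. The standard technique (as in \cite{bb13} or \cite[Chapter~5]{B18}) is to write $\op{\Pi}$ as a disjoint union of half-open simplices $\Delta_\sigma^\circ$, one for each $\sigma \in \JH(\Pi)$, where each simplex is made half-open by removing exactly those facets that separate it from an already-counted neighbor. The number of removed facets governs the shift in the lattice-point generating function: a half-open unimodular simplex with $k$ removed facets contributes $\frac{z^{k}}{(1-z)^{n+1}}$ to the Ehrhart series. Summing gives
\[
  \mathrm{Ehr}(\op{\Pi};z) \ = \ \frac{\sum_{\sigma \in \JH(\Pi)} z^{k(\sigma)}}{(1-z)^{n+1}},
\]
so that $h^*_{O_\Pi}(z) = \sum_{\sigma} z^{k(\sigma)}$ where $k(\sigma)$ is the number of half-open facets in the chosen ordering.

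Next I would pin down $k(\sigma)$ combinatorially. The simplex $\Delta_\sigma$ is cut out by the chain of inequalities $0 \le x_{\sigma_1} \le \dots \le x_{\sigma_n} \le 1$; the facets correspond to the ``$\le$'' signs in this chain, and a facet gets removed precisely when the adjacent inequality is a strict one forced by the chosen flag/ordering of the simplices. The natural ordering of the $\Delta_\sigma$ makes this count equal to the number of positions $i$ where $\sigma_i > \sigma_{i+1}$, i.e.\ the descents of $\sigma$ (with the boundary facets $0 \le x_{\sigma_1}$ and $x_{\sigma_n} \le 1$ handled by the natural-labeling hypothesis $\id \in \JH(\Pi)$, which guarantees the identity simplex is the unique one with no removed facets). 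This is where I would invoke the computation of the half-open decomposition of the cube into the $\Delta_\sigma$ from \cite[Proof of Theorem 6.9]{bb13}: restricting that decomposition to $\op{\Pi}$ keeps exactly the simplices indexed by $\JH(\Pi)$, and the descent count is inherited from the full cube's decomposition. Concluding $k(\sigma) = \mathrm{des}(\sigma)$ then yields the claim.

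The main obstacle is step (ii): cleanly justifying that the half-open facet count of each simplex equals its descent statistic, and in particular verifying that restricting the cube's triangulation to $\op{\Pi}$ does not alter this count. One must check that for $\sigma \in \JH(\Pi)$ all the facet inequalities defining $\Delta_\sigma$ survive the restriction to $\op{\Pi}$ (so no descent is spuriously gained or lost at the boundary), which is exactly where the hypothesis that $\Pi$ is naturally labeled and the structure of $f(\Pi)$ enter. I would expect the bulk of the careful writing to be in fixing a shelling-compatible linear order on $\JH(\Pi)$ and confirming that the induced half-open structure assigns to $\sigma$ precisely the facets indexed by $\mathrm{Des}(\sigma)$; everything else is a routine application of the half-open Ehrhart machinery.
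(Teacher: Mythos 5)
Your overall architecture is the right one, and it matches the method this paper actually uses (the paper states Proposition~\ref{prop: classic h star} by citation to \cite[Chapter~5]{B18}, but its proof of the signed analogue, Proposition~\ref{prop: signed h star}, is exactly this pipeline: unimodular triangulation into the simplices $\Delta_\sigma$, half-open decomposition, and Lemma~\ref{lem: h star simp}). The genuine gap is that you never execute the one step that carries all the content: you posit a ``shelling-compatible linear order'' on $\JH(\Pi)$, assert that the induced half-open structure removes from $\Delta_\sigma$ precisely the facets indexed by $\mathrm{Des}(\sigma)$, and then explicitly defer the verification (``I would expect the bulk of the careful writing\dots''). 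Nothing in the proposal constructs such an order, proves that removing facets shared with earlier simplices yields a disjoint cover for that order (this fails for an arbitrary linear order), or establishes the count $k(\sigma)=\mathrm{des}(\sigma)$. Your own worry about the boundary is also real for the shelling route: when a descent facet $x_{\sigma_i}=x_{\sigma_{i+1}}$ of $\Delta_\sigma$ lies on $\partial\op{\Pi}$ it has no neighbor and would not be removed, so you would need the lemma that for $\sigma\in\JH(\Pi)$ and $i\in\mathrm{Des}(\sigma)$ the swapped permutation is again in $\JH(\Pi)$ (true, via natural labeling: $\sigma_i>\sigma_{i+1}$ adjacent in a linear extension forces $\sigma_i,\sigma_{i+1}$ incomparable in $\Pi$) --- but you do not supply it.

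The paper's device closes all of this at once and is the fix you should adopt: instead of a shelling order, fix the single point $p=\left(\tfrac{1}{n+1},\dots,\tfrac{n}{n+1}\right)$ and define the half-open simplices by the ``beyond'' criterion. Natural labeling is used exactly to guarantee $p\in\op{\Pi}^\circ$, so Lemma~\ref{lem: disjoint union} immediately gives $\op{\Pi}=\biguplus_{\sigma\in\JH(\Pi)}\mathbb{H}_p\Delta_\sigma$ with no order to choose and no shelling to verify. The facet count is then an intrinsic per-simplex computation, as in Proposition~\ref{prop: half open simplex}: $p$ is beyond the facet $x_{\sigma_i}=x_{\sigma_{i+1}}$ iff $\tfrac{\sigma_i}{n+1}>\tfrac{\sigma_{i+1}}{n+1}$ iff $i\in\mathrm{Des}(\sigma)$, while $p$ is never beyond $0\le x_{\sigma_1}$ or $x_{\sigma_n}\le 1$. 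Moreover the boundary issue evaporates: since $p\in\op{\Pi}^\circ$, $p$ can never be beyond a facet lying in $\partial\op{\Pi}$, so removed facets are automatically interior. With that substitution (which is also what your cited sources \cite{bb13} and \cite[Chapter~5]{B18} actually do), your sketch becomes the paper's proof; as written, the decisive identification $k(\sigma)=\mathrm{des}(\sigma)$ remains unproven.
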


In this section, we use the triangulation  $\mathcal{T}$ described in
Section~\ref{sec:altdes} to give an analogous description of the
$h^\ast$-polynomial of $\op{P}$ in terms of statistics on the type-$B$
permutation group $S_n^B$. 
We first introduce some background on half-open polytopes and half-open decompositions of polytopes.  A more complete treatment of this material can be found in \cite[ Chapter 5]{B18}.
We first define what it means for a point to be beyond a face $F$ of a polytope
$P$ in the special case in which $F$ is a facet of $P$.  For more generality,
see \cite[Chapter~3]{B18}. 

\begin{defin}
Let $P \subset \R^n$ be a full-dimensional polytope, and let $F$ be a facet of $P$
with defining hyperplane $\langle a,x\rangle = b$ such that $P$ lies in the half space $\langle a,x\rangle \leq b$.  Then $p\in \R^n$ is  \emph{beyond} F if $\langle a,p \rangle > b$.
\end{defin} 

This concept of a point being beyond a facet is used to construct half-open polytopes.  

\begin{defin}
Let $P \subset \R^n$ be a full-dimensional polytope with facets $F_1, \dots F_m$.  Let $q
\in \R^n$ be generic relative to $P$, i.e., $q$ does not lie on any facet-defining hyperplane of $P$.  Then we define
\[\HH_qP \ := \ P \setminus \bigcup_{i \in I} F_i \, , \]
where $I := \{i \in [m]: q \mbox{ beyond } F_i \}$.  We call $\HH_qP$ a
\emph{half-open polytope}. 
\end{defin}

Applying this construction to a triangulation of $P$ allows us to write $P$ as a disjoint union of half-open simplices.
 
\begin{lem}[see, e.g.,~\cite{B18}]\label{lem: disjoint union}
Let $P\subset \R^n$ be a full-dimensional polytope with dissection $P = P_1
\cup P_2 \cup \dots \cup P_m$.  If $q \in P^\circ$ is generic relative to each $P_j$, then 
\[P \ = \ \HH_qP_1\uplus \HH_qP_2 \uplus \dots \uplus \HH_qP_m \, . \]
\end{lem}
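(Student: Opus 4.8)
The plan is to prove this by induction on $m$, the number of pieces in the dissection. The key...The plan is to prove the stronger pointwise statement that every $x \in P$ lies in exactly one of the sets $\HH_q P_1, \dots, \HH_q P_m$; this yields simultaneously that the union equals $P$ and that it is disjoint. The starting observation is a reinterpretation of $\HH_q P_j$: if $F$ is a facet of $P_j$ with outer description $\langle a, x\rangle \le b$ and outward normal $a$, then ``$q$ beyond $F$'' means $\langle a, q\rangle > b$, i.e.\ $q$ lies strictly on the outer side of $F$. Thus $\HH_q P_j$ is obtained from $P_j$ by deleting exactly those facets whose outer side contains $q$ (the facets through which a ray emanating from $q$ \emph{enters} $P_j$) and retaining those through which such a ray \emph{exits} $P_j$. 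Concretely, a point $x \in P_j$ lies in $\HH_q P_j$ if and only if no facet of $P_j$ containing $x$ has $q$ on its outer side.

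Next I would run a ``shooting ray'' argument. Fix $x \in P$. If $x = q$, genericity forces $q$ to lie in the interior of a unique piece (it lies in some $P_j$ since the pieces cover $P$, and it lies on no facet hyperplane, hence in no second piece), and then $x \in \HH_q P_j$ for that $j$ only. If $x \ne q$, consider the segment $\rho(t) = q + t(x-q)$, $t \ge 0$, which stays in $P$ for $t \in [0, t^\ast]$ and leaves $P$ for $t > t^\ast$, where $t^\ast \ge 1$ since $x = \rho(1) \in P$. Because $q$ lies on no facet hyperplane of any piece, the segment meets each facet-defining hyperplane in at most one point and transversally; hence $[0,t^\ast]$ splits into finitely many closed subintervals $[t_{k-1}, t_k]$ on each of which $\rho$ lies in a single piece $P_{j_k}$, with $t_0 = 0$ and $\rho(0) = q$ in the interior of $P_{j_1}$.

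The crux is to check, using the reinterpretation above, how the breakpoints get assigned. At an interior crossing $t_k$ (with $0 < t_k < t^\ast$), the point $\rho(t_k)$ lies on a facet of $P_{j_k}$ through which the ray exits and on a facet of $P_{j_{k+1}}$ through which it enters; computing the sign of $\langle a, x - q\rangle$ shows the former is an exit facet (so $q$ is on its inner side and the point is \emph{kept} in $\HH_q P_{j_k}$) while the latter is an entry facet (so $q$ is on its outer side and the point is \emph{deleted} from $\HH_q P_{j_{k+1}}$). Thus each breakpoint is charged to the unique piece it exits; each point in the relative interior of a subinterval lies in the interior of its unique piece, hence in that half-open piece; and the final point $\rho(t^\ast)$ exits $P$ through a retained facet of $P_{j_r}$. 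Therefore every point of the segment, in particular $x$, lies in exactly one $\HH_q P_j$. The main obstacle is precisely the bookkeeping at these shared boundary points, and the hypothesis that $q \in P^\circ$ is generic relative to every $P_j$ is exactly what makes the segment cross transversally and makes the ``exit versus entry'' dichotomy well defined, so that each boundary point is claimed by one and only one piece.
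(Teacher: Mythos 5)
The paper itself does not prove this lemma --- it is quoted from \cite{B18} --- so the relevant comparison is with the standard proof there, which runs on the same idea as yours but locally: one characterizes $x \in \HH_q P_j$ by the condition that $x \in P_j$ and the segment $\left(x,\, x + \epsilon(q-x)\right)$ lies in $P_j$ for small $\epsilon > 0$, and then existence and uniqueness of such a $j$ are read off from the segment $[q,x]$ in a neighborhood of $x$ alone, using that the segment meets each piece in a closed interval and that, away from the finitely many hyperplane crossings, a point of the segment lies in the interior of exactly one piece. Your reinterpretation of $\HH_q P_j$ (keep $x \in P_j$ iff no facet of $P_j$ through $x$ has $q$ beyond it), your use of genericity (the segment is contained in no facet hyperplane, since $q$ is on none, so each hyperplane is crossed at most once and transversally), and your exit/entry sign computation are all correct; your version simply does globally, with breakpoint bookkeeping along the whole ray, what the textbook argument does in a neighborhood of $x$.

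One step in that bookkeeping needs tightening. At a breakpoint $\rho(t_k)$ the configuration is not necessarily one exit facet of $P_{j_k}$ plus one entry facet of $P_{j_{k+1}}$: the point may lie on several facets of each piece, and --- more importantly --- it may lie in pieces $P_\ell$ whose intersection with the segment is the single point $\{t_k\}$ (the ray grazing $P_\ell$ through a lower-dimensional face). Such pieces never appear as any $P_{j_k}$ in your subinterval decomposition, yet they contain $\rho(t_k)$, so the claim that the breakpoint ``is claimed by one and only one piece'' requires checking them as well. The fix is your own sign computation, applied to \emph{every} facet through the point of \emph{every} piece containing it: for the piece being exited, each of its facet functionals through $\rho(t_k)$ is approached from the inner side, so $q$ is beyond none of them and the point is kept; for any other piece containing $\rho(t_k)$ (including grazing ones), the intersection with the segment is an interval with left endpoint $t_k$, so some facet inequality of that piece is violated just before $t_k$, the corresponding affine function decreases through its bound, hence $q$ is beyond that facet and the point is deleted. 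With that elaboration your proof is complete and is, in substance, the same argument as the cited one.
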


We can apply these results to the unimodular triangulation  $\mathcal{T}$ of
$\op{P}$  described in the previous section in order to write $\op{P}$ as a
disjoint union of half-open unimodular simplices.  These simplices can be
described in terms of the naturally ordered descent statistic of $S_n^B$.  For
more information about various statistics (including several definitions of the
descent set) of $B_n$, see, e.g.,~\cite{bb13}.

\begin{defin}
For $\sigma \in S_n^B$, the \emph{naturally ordered descent set} of $\sigma$ is 
\[\mbox{NatDes}(\sigma) \ := \ \left\{i \in \{0, \dots, n-1\}: \, \sigma(i)>\sigma(i+1)
\right\} , \]
where we use the convention that $\sigma(0) = 0$.   The \emph{natural descent
statistic} of $\sigma$  is $\mbox{natdes}(\sigma) := |\mbox{NatDes}(\sigma)|$.
\end{defin}

\begin{prop}  \label{prop: half open simplex}
Let $ \sigma = (\pi, \epsilon) \in S_n^B$ and $p := \left(\frac{1}{n+1},
\frac{2}{n+1}, \dots, \frac{n}{n+1}\right)$. Then
\[\mathbb{H}_p\Delta_\sigma \ = \ \left\{x \in \R^\Pi: \begin{array}{ll}
        0\leq \epsilon_1x_{\pi_1} \leq \dots \leq \epsilon_nx_{\pi_n}\leq 1\\
       \epsilon_ix_{\pi_1} < \epsilon_{i+1}x_{\pi_{i+1}} & \text{if } i \in
\mbox{\rm NatDes}(\sigma)\\
      0< \epsilon_1x_{\pi_1} & \text{if } 0 \in \mbox{\rm NatDes}(\sigma)
        \end{array}\right\}. \]
\end{prop}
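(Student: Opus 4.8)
The plan is to apply the definition of the half-open polytope directly: $\HH_p\Delta_\sigma = \Delta_\sigma \setminus \bigcup_{i\in I}F_i$, where $F_0,\dots,F_n$ are the facets of the simplex $\Delta_\sigma$ and $I$ records exactly those facets beyond which $p$ lies. Since $\Delta_\sigma$ is cut out by the chain $0\le \epsilon_1 x_{\pi_1}\le\cdots\le\epsilon_n x_{\pi_n}\le 1$, its $n+1$ facets come from turning exactly one of these inequalities into an equality: the bottom facet $F_0=\{\epsilon_1 x_{\pi_1}=0\}$, the interior facets $F_i=\{\epsilon_i x_{\pi_i}=\epsilon_{i+1} x_{\pi_{i+1}}\}$ for $1\le i\le n-1$, and the top facet $F_n=\{\epsilon_n x_{\pi_n}=1\}$. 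I would write each in the form $\langle a,x\rangle\le b$ with $\Delta_\sigma$ on the $\le$ side, and then test whether $\langle a,p\rangle>b$.

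The entire computation rests on one identity. Writing $p_j=\tfrac{j}{n+1}$ and using $\pi_i=|\sigma(i)|$ together with $\epsilon_i=\mathrm{sign}(\sigma(i))$, we get
\[\epsilon_i p_{\pi_i} \ = \ \frac{\epsilon_i\pi_i}{n+1} \ = \ \frac{\sigma(i)}{n+1},\]
which reduces every facet test to a comparison of consecutive values of $\sigma$. For $1\le i\le n-1$, the point $p$ is beyond $F_i$ exactly when $\epsilon_i p_{\pi_i}>\epsilon_{i+1}p_{\pi_{i+1}}$, i.e.\ $\sigma(i)>\sigma(i+1)$, which is precisely $i\in\mathrm{NatDes}(\sigma)$. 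For the bottom facet, $p$ is beyond $F_0$ exactly when $\epsilon_1 p_{\pi_1}<0$, i.e.\ $\sigma(1)<0=\sigma(0)$, which under the convention $\sigma(0)=0$ is precisely $0\in\mathrm{NatDes}(\sigma)$. Removing these facets is what converts the corresponding weak inequalities into the strict inequalities in the second and third lines of the claimed description.

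The one genuinely special step is the top facet, and it explains the engineered choice of $p$: the point $p$ is beyond $F_n$ iff $\epsilon_n p_{\pi_n}>1$, but $\epsilon_n p_{\pi_n}=\tfrac{\sigma(n)}{n+1}\le\tfrac{n}{n+1}<1$ since $|\sigma(n)|\le n$, so $F_n\notin I$ for every $\sigma$. This is why the top inequality $\epsilon_n x_{\pi_n}\le 1$ persists weakly, and the coordinates of $p$ are chosen precisely so that $p$ never escapes the top facet while remaining generic. Along the way I would also record that $p$ is generic relative to $\Delta_\sigma$, lying on none of the facet hyperplanes, since $\sigma(i)\ne 0$, $\sigma(i)\ne\sigma(i+1)$, and $\sigma(n)\ne n+1$ all hold as $\sigma$ is a signed permutation. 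I do not expect a real obstacle beyond careful bookkeeping; the only point demanding attention is orienting each facet hyperplane correctly so that $\Delta_\sigma$ sits on the $\le$ side before the "beyond" test is applied, since a flipped normal would invert every descent condition.
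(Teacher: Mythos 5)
Your proposal is correct and takes essentially the same route as the paper's proof: both identify the $n+1$ facets of $\Delta_\sigma$, reduce each ``beyond'' test via the substitution $\epsilon_i p_{\pi_i} = \sigma(i)/(n+1)$ to comparisons of consecutive values of $\sigma$ (recovering $\mathrm{NatDes}(\sigma)$ for the interior and bottom facets), and observe that $p$ is never beyond the top facet $\epsilon_n x_{\pi_n} = 1$. Your explicit genericity check and the sharp bound $|\sigma(n)|/(n+1) \leq n/(n+1) < 1$ are minor refinements of the same argument.
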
 

\begin{proof}
We identify the facets of $\Delta_\sigma$ that are removed in the half-open
polytope $\mathbb{H}_p\Delta_\sigma$, starting with the facets of the form
$\epsilon_ix_{\pi_i} = \epsilon_{i+1}x_{\pi_{x+1}}$ for $i \in [n-1]$.  These
facets are removed when $p$ is beyond the facet, which occurs exactly when
$\epsilon_ip_{\pi_i} > \epsilon_{i+1}p_{\pi_{i+1}}$.  Substituting our
expressions for the coordinates of $p$ yields
\begin{equation*}
\epsilon_i\frac{\pi_i}{n+1} \ > \ \epsilon_{i+1}\frac{\pi_{i+1}}{n+1} \, ,
\end{equation*}
which simplifies to $\sigma(i)>\sigma(i+1)$. Thus, we see that a facet of $\Delta_\sigma$ of the form $\epsilon_ix_{\pi_i} = \epsilon_{i+1}x_{\pi_{i+1}}$  is removed exactly when $i \in \mbox{NatDes}(\sigma)$.  

We now consider the facet given by $\epsilon_1x_{\pi_1}=0$.  We know that $p$ is beyond this facet exactly when $\epsilon_1p_{\pi_1}<0$.  Since all the coordinates of $p$ are positive, this holds exactly when $\epsilon_1<0$, which coincides with the cases in which $0 \in \mbox{NatDes}(\sigma)$. 

We finally consider the facet given by $\epsilon_nx_{\pi_n}= 1$.  Since $-1\leq p_i \leq 1$ for any $i \in [n]$, we know that $p$ is never beyond this facet.  Thus, this facet is never removed in $\mathbb{H}_p\Delta_\sigma$.  
\end{proof}

A classical result (see, e.g., \cite{B18}) gives the $h^*$-polynomials of half-open unimodular simplices.  

\begin{lem}\label{lem: h star simp}
Let $\mathbb{H}_p\Delta$ be a unimodular half open simplex with $k$ missing facets.  Then, $h^*_{\mathbb{H}_p\Delta}(z) =z^k$.  
\end{lem}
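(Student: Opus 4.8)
The plan is to pass to the cone over $\Delta$ and read off $h^*_{\HH_p\Delta}(z)$ from the lattice points of a suitable \emph{half-open} fundamental parallelepiped, which for a unimodular simplex is a single point whose height records the number of deleted facets. First I would homogenize: write the vertices of the (full-dimensional) simplex $\Delta \subseteq \R^n$ as $v_0, \dots, v_n$ and lift them to $w_j := (v_j, 1) \in \R^{n+1}$. Unimodularity of $\Delta$ says $v_1 - v_0, \dots, v_n - v_0$ form a $\Z$-basis of $\Z^n$; subtracting the first column from the others shows the matrix with columns $w_0, \dots, w_n$ has determinant $\pm 1$, so $w_0, \dots, w_n$ is itself a $\Z$-basis of $\Z^{n+1}$. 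Let $C := \{\sum_j \lambda_j w_j : \lambda_j \ge 0\}$. The lattice points of $C$ at height $t$ (last coordinate $t$) are exactly those of $t\Delta \cap \Z^n$, so $\mathrm{Ehr}(\Delta; z) = \sum_{y \in C \cap \Z^{n+1}} z^{\mathrm{ht}(y)}$, and the same will hold for the half-open versions once facets are matched up.

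Next I would translate ``removing a facet'' into the cone. The facet of $\Delta$ opposite $v_i$ is precisely the face $\{\lambda_i = 0\}$, and it lifts to a lateral facet of $C$; deleting it amounts to imposing $\lambda_i > 0$. Thus if $I \subseteq \{0, \dots, n\}$ indexes the $k = |I|$ missing facets, the half-open simplex $\HH_p\Delta$ lifts to the half-open cone $\HH C := \{\sum_j \lambda_j w_j : \lambda_j \ge 0 \ (j \notin I), \ \lambda_j > 0 \ (j \in I)\}$, and correspondingly $\mathrm{Ehr}(\HH_p\Delta; z) = \sum_{y \in \HH C \cap \Z^{n+1}} z^{\mathrm{ht}(y)}$.

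Then comes the key step, the parallelepiped tiling. I would set
\[\Pi \ := \ \left\{\textstyle\sum_{j=0}^n \lambda_j w_j : \ 0 \le \lambda_j < 1 \ (j \notin I), \ 0 < \lambda_j \le 1 \ (j \in I)\right\}\]
and verify that $\HH C$ tiles as the disjoint union $\bigsqcup_{m \in \Z_{\ge 0}^{n+1}} (\Pi + \sum_j m_j w_j)$. This is the half-open analogue of the standard decomposition: split each $\lambda_j$ into integer and fractional parts, using the residue in $[0,1)$ for $j \notin I$ and the residue in $(0,1]$ for $j \in I$ (i.e. $m_j = \lceil \lambda_j\rceil - 1$ there), which is a unique decomposition on $\HH C$. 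Since each $w_j$ has height $1$, summing the resulting geometric series in the generators yields
\[\mathrm{Ehr}(\HH_p\Delta; z) \ = \ \frac{\sum_{y \in \Pi \cap \Z^{n+1}} z^{\mathrm{ht}(y)}}{(1-z)^{n+1}} \, ,\]
so the numerator is exactly $h^*_{\HH_p\Delta}(z)$.

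Finally I would compute $\Pi \cap \Z^{n+1}$. Because $w_0, \dots, w_n$ is a lattice basis, every lattice point is uniquely $\sum_j m_j w_j$ with $m_j \in \Z$; such a point lies in $\Pi$ iff $m_j = 0$ for $j \notin I$ (the only integer in $[0,1)$) and $m_j = 1$ for $j \in I$ (the only integer in $(0,1]$). Hence $\Pi$ contains the single lattice point $\sum_{j \in I} w_j$, of height $|I| = k$, and the numerator collapses to $z^k$, giving $h^*_{\HH_p\Delta}(z) = z^k$. The only real obstacle is bookkeeping: keeping the open/closed conventions consistent, so that ``facet opposite $v_i$ deleted'' $\leftrightarrow$ ``$\lambda_i > 0$'' $\leftrightarrow$ ``residue in $(0,1]$,'' and confirming the mixed parallelepiped genuinely tiles $\HH C$. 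Once unimodularity is invoked to make the generators a $\Z$-basis, the lattice-point count reduces to the single point above and the height computation is immediate.
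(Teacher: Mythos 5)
Your proof is correct, and since the paper does not prove this lemma itself (it cites \cite{B18}), the right comparison is with that reference: your argument --- lifting the unimodular simplex to a cone on the lattice basis $w_0,\dots,w_n$, tiling the half-open cone by the mixed half-open parallelepiped, and reading off the unique lattice point $\sum_{j\in I} w_j$ at height $k$ --- is exactly the standard proof given there. The open/closed bookkeeping (deleted facet $\leftrightarrow$ $\lambda_i>0$ $\leftrightarrow$ residue in $(0,1]$ with $m_j=\lceil\lambda_j\rceil-1$) is handled correctly, so there is nothing to fix.
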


We can now describe the $h^*$-polynomial of $\op{P}$ for any naturally ordered signed poset $P$ in terms of descent statistics in a statement analogous to Proposition \ref{prop: classic h star}.  

\begin{prop}\label{prop: signed h star}
Let $P$ be a naturally labeled signed poset on $[n]$ with  Jordan--H\"{o}lder set $\JH(P) \subset B_n$.  Then
\[h^*_{\op{P}}(z) \ = \sum_{\tau \in \JH(P)}z^{\natdes(\tau)}.\]
\end{prop}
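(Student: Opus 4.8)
The plan is to assemble the three preceding results---Lemma~\ref{lem: disjoint union}, Proposition~\ref{prop: half open simplex}, and Lemma~\ref{lem: h star simp}---around the triangulation $\mathcal{T}$ of $\op{P}$, whose maximal cells are exactly the simplices $\Delta_\sigma$ with $\sigma \in \JH(P)$. Concretely, I would take the specific point $p = \left(\frac{1}{n+1}, \dots, \frac{n}{n+1}\right)$ already appearing in Proposition~\ref{prop: half open simplex}, use it to produce a half-open decomposition of $\op{P}$, read off the number of missing facets of each piece as a descent statistic, and sum the resulting monomials.

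First I would check that $p$ satisfies the hypotheses of Lemma~\ref{lem: disjoint union} relative to the dissection $\op{P} = \bigcup_{\sigma \in \JH(P)} \Delta_\sigma$. For membership in $\op{P}^\circ$: since $p = \frac{1}{n+1}(1, 2, \dots, n)$, we have $\langle \alpha, p\rangle = \frac{1}{n+1}\langle \alpha, \id\rangle$ for every $\alpha \in P$, and because $P$ is naturally labeled we have $\id \in \JH(P)$, so $\langle \alpha, \id\rangle \geq 0$; in fact no root $\alpha \in B_n$ satisfies $\langle \alpha, \id \rangle = 0$, so the inequality is strict, and clearly $0 < p_k < 1$, placing $p$ in the interior. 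For genericity relative to each $\Delta_\sigma = \{x : 0 \le \epsilon_1 x_{\pi_1} \le \dots \le \epsilon_n x_{\pi_n} \le 1\}$, I would substitute $p$ into each facet hyperplane: the equalities $\epsilon_i p_{\pi_i} = \epsilon_{i+1} p_{\pi_{i+1}}$, $\epsilon_1 p_{\pi_1} = 0$, and $\epsilon_n p_{\pi_n} = 1$ reduce to $\sigma(i) = \sigma(i+1)$, $\sigma(1) = 0$, and $\sigma(n) = n+1$ respectively, each impossible for a signed permutation. Hence Lemma~\ref{lem: disjoint union} yields $\op{P} = \biguplus_{\sigma \in \JH(P)} \HH_p \Delta_\sigma$.

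From here the argument is mechanical. Lattice-point counts are additive over a disjoint union, so $\ehr_{\op{P}}(t) = \sum_\sigma \ehr_{\HH_p\Delta_\sigma}(t)$, and since every $\Delta_\sigma$ is full-dimensional these Ehrhart series share the denominator $(1-z)^{n+1}$, whence $h^*_{\op{P}}(z) = \sum_{\sigma \in \JH(P)} h^*_{\HH_p\Delta_\sigma}(z)$. By Proposition~\ref{prop: half open simplex} the facets removed from $\Delta_\sigma$ are indexed exactly by $\mathrm{NatDes}(\sigma)$---the between-coordinate facets for $i \in \mathrm{NatDes}(\sigma) \cap [n-1]$ together with the facet $\epsilon_1 x_{\pi_1} = 0$ when $0 \in \mathrm{NatDes}(\sigma)$, while $\epsilon_n x_{\pi_n} = 1$ is never removed---so $\HH_p\Delta_\sigma$ has precisely $\natdes(\sigma)$ missing facets. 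Since $\mathcal{T}$ is unimodular, each $\Delta_\sigma$ is a unimodular simplex, and Lemma~\ref{lem: h star simp} gives $h^*_{\HH_p\Delta_\sigma}(z) = z^{\natdes(\sigma)}$. Summing over $\sigma \in \JH(P)$ completes the proof.

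I expect the only genuine subtlety to be the verification that the single point $p$ simultaneously lies in the interior of $\op{P}$ and is generic for every cell $\Delta_\sigma$: the \emph{naturally labeled} hypothesis is precisely what forces $p \in \op{P}^\circ$, and the arithmetic of the coordinates $k/(n+1)$ is what guarantees genericity. The remaining steps are direct citations of the quoted lemmas, with the $h^*$-additivity over half-open dissections being the standard consequence of Ehrhart additivity once all pieces are full-dimensional.
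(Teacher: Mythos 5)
Your proposal is correct and follows essentially the same route as the paper: decompose $\op{P}$ via the triangulation $\mathcal{T}$ and the point $p = \left(\tfrac{1}{n+1}, \dots, \tfrac{n}{n+1}\right)$ using Lemma~\ref{lem: disjoint union}, identify the missing facets of each $\HH_p\Delta_\sigma$ with $\mathrm{NatDes}(\sigma)$ via Proposition~\ref{prop: half open simplex}, and sum the monomials $z^{\natdes(\sigma)}$ from Lemma~\ref{lem: h star simp}. Your explicit verifications that $p \in \op{P}^\circ$ (via $\langle \alpha, \id\rangle > 0$ for all roots) and that $p$ is generic for every cell are details the paper leaves implicit, and they are carried out correctly.
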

\begin{proof}
Since $P$ is naturally labeled, we know that $p =
\left(\frac{1}{n+1},\dots,\frac{n}{n+1}\right)$ is an interior point of
$\op{P}$.  Thus, we can use the triangulation $\mathcal{T}$ restricted to
$\op{P}$  to decompose $\op{P}$ into a disjoint union of half-open simplices.
Using Lemma \ref{lem: disjoint union} with respect to the point $p$ and this
triangulation, we obtain
	\[\op{P} \ = \biguplus_{\sigma \in \JH(P)} \mathbb{H}_p\Delta_\sigma.\]

From Proposition \ref{prop: half open simplex}, we know that
$\mathbb{H}_p\Delta_\sigma$ is a unimodular half-open simplex with
$\natdes(\sigma)$ missing facets.  From Lemma \ref{lem: h star simp}, we know that the $h^*$- polynomial of such an object  is $z^{\natdes(\sigma)}$.  Since the $h^*$-polynomials of disjoint half-open polytopes are additive, 
\[h^*_{\op{P}}(z) \ = \sum_{\tau \in \JH(P)}z^{\natdes(\tau)}. \qedhere \]
\end{proof}

\begin{rem}
This result gives a description of the $h^*$-polynomial only for naturally labeled signed posets.  However, since unimodularly equivalent polytopes have identical $h^*$-polynomials, this encompasses all the unique $h^*$-polynomials corresponding to signed posets by Propsitions \ref{prop: geo iso nice} and~\ref{prop: every poset nat label}.
\end{rem}


\section{Which Signed Order Polytopes Are Gorenstein?}\label{sec:gorenstein} 

We now review a classification of Gorenstein order polytopes in the classical case and discuss how it extends to signed order polytopes.

\begin{defin}
A lattice polytope is \emph{Gorenstein} if there exists a positive integer $k$
such that $(k-1)P^\circ \cap \Z^d = \emptyset$, $|kP^\circ \cap \Z^d|=1$, and
$|tP^\circ \cap \Z^d |=|(t-k)P^\circ \cap \Z^d|$ for all integers $t > k$. 
\end{defin}

This is equivalent to the polytope having a symmetric $h^*$-vector. For
classical posets, the following result is well known (see, e.g., \cite{B18}):

\begin{prop}\label{classical gorenstein}
The order polytope of a poset $P$ is Gorenstein  if and only if $P$ is graded
(i.e., all maximal chains have the same length).
\end{prop}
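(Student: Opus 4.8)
The plan is to route everything through the standard reformulation of the Gorenstein property in terms of reflexivity: a full-dimensional lattice polytope $Q \subset \R^n$ satisfies the conditions of the definition above precisely when, for the codegree $k$ (the least dilation factor with $kQ^\circ \cap \Z^n \neq \emptyset$), the dilate $kQ$ has a unique interior lattice point $c$ and $kQ - c$ is reflexive, i.e. every facet-defining hyperplane of $kQ$ lies at lattice distance exactly $1$ from $c$. I would pair this with Stanley's irredundant facet description of $\mathcal{O}(\Pi)$, already recorded above: the facets are $x_a \le x_b$ for covers $a \lessdot_\Pi b$, $x_a \ge 0$ for minimal $a$, and $x_a \le 1$ for maximal $a$. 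Writing $c = (c_p)_{p \in \Pi}$ and scaling the latter two families by $k$, the reflexivity condition becomes the linear system $c_b - c_a = 1$ for every cover $a \lessdot_\Pi b$, together with $c_a = 1$ for every minimal $a$ and $c_a = k-1$ for every maximal $a$ (here the normals $e_b - e_a$, $e_a$ are primitive, so the lattice distances are literally $c_b - c_a$, $c_a$, and $k - c_a$). The whole statement thus reduces to deciding when such an integer-valued $c$ exists.

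For the implication that graded forces Gorenstein, I would produce $c$ from a rank function. If all maximal chains of $\Pi$ have the same number $L$ of elements, then extending a chain at its top to a maximal element and invoking equality of maximal-chain lengths shows that any two saturated chains from minimal elements up to a fixed $p$ have equal length; hence $\rho(p) := $ (this common length) is a well-defined rank function with $\rho \equiv 0$ on minimals, $\rho \equiv L-1$ on all maximals, and $\rho(b) = \rho(a) + 1$ across every cover. Setting $k := L+1$ and $c := \rho + 1$ solves the system. One then checks that $c$ is the unique interior lattice point of $k\mathcal{O}(\Pi)$: along any maximal chain the $L$ relevant coordinates form a strictly increasing integer sequence in $\{1, \dots, L\}$, forcing it to equal $1,2,\dots,L$, and purity guarantees every element lies on such a chain. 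Every facet is then at lattice distance $1$ from $c$, so $\mathcal{O}(\Pi)$ is Gorenstein.

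For the converse, suppose $\mathcal{O}(\Pi)$ is Gorenstein, and let $c$ be the function satisfying the system. Take any maximal chain $a_0 \lessdot_\Pi a_1 \lessdot_\Pi \cdots \lessdot_\Pi a_r$, so $a_0$ is minimal and $a_r$ maximal. The cover equations give $c_{a_i} = c_{a_0} + i$, while the boundary equations force $c_{a_0} = 1$ and $c_{a_r} = k-1$. Hence $r = k-2$ for every maximal chain, so all maximal chains have the same length and $\Pi$ is graded.

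The main obstacle I anticipate is the bookkeeping in the equivalence between the Gorenstein definition as stated (interior-point counts) and the reflexivity/lattice-distance formulation; I would isolate this as a cited lemma rather than reprove it, since it is standard Ehrhart theory (palindromic $h^*$ if and only if a dilate is reflexive up to translation). A secondary point needing care is the well-definedness of the rank function: the hypothesis that \emph{all} maximal chains have equal length must be leveraged, via the extension-and-concatenation argument, to yield the Jordan--Dedekind chain condition, so that the system for $c$ is genuinely solvable rather than only locally consistent.
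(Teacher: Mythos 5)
Your proof is correct, but it takes a genuinely different route from the paper --- indeed, the paper does not prove this proposition at all: it is stated as well known with a citation to \cite{B18}, so the natural comparison is with the paper's proof of the signed analogue, Proposition~\ref{prop: gor result}. There the authors verify the Gorenstein definition directly: they build the candidate interior point of the $k$th dilate from a rank function, prove its uniqueness by running along maximal chains, exhibit an explicit translation bijection $p \mapsto p + \bigl(\rho(1)-\rho(0), \dots, \rho(n)-\rho(0)\bigr)$ between $(t-k)\op{P} \cap \Z^n$ and the interior lattice points of $t\op{P}$ for all $t \geq k$, and, in the non-graded case, construct by hand a second interior point of the homogenized cone that breaks the Gorenstein symmetry. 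You instead compress all of the $t$-dependent bookkeeping into the standard equivalence ``Gorenstein $\Leftrightarrow$ the codegree dilate, translated by its unique interior lattice point, is reflexive,'' which, combined with Stanley's irredundant facet description, turns both implications into solvability of the linear system $c_b - c_a = 1$ on covers $a \lessdot_\Pi b$, $c_a = 1$ on minimal elements, and $c_a = k-1$ on maximal elements. This buys a much cleaner converse (a two-line contradiction along a short maximal chain, versus the paper's delicate perturbation construction in the signed setting), at the cost of importing the reflexivity equivalence as a black box and of relying essentially on irredundancy of the facet description --- redundant inequalities need not sit at lattice distance $1$, so it matters that the listed inequalities are exactly the facets, which Stanley's result guarantees. (Your caution about matching the stated Gorenstein definition is warranted for an incidental reason: as printed, the paper's definition reads $|tP^\circ \cap \Z^d| = |(t-k)P^\circ \cap \Z^d|$, where the second set should be $(t-k)P \cap \Z^d$; your cited lemma matches the intended, standard definition.) Your two flagged subtleties are handled correctly: the extension-and-concatenation argument does yield the Jordan--Dedekind condition, so $\rho$ is well defined, and integrality of $c$ is automatic since $c_a = 1$ on minimal elements propagates in unit steps along saturated chains to every element.
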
 

In this section, we will develop an analogue of this result for signed posets; we begin with a representation of a signed poset on $[n]$ as a classical poset on $[2n+1]$ that satisfies certain properties, first introduced in~\cite{fischer93}.

\begin{defin}
Let $P$ be a signed poset on $[n]$.  The \emph{Fischer represention} $\hat{G}(P)$ is a poset on $[-n,n] = \{-n, -(n-1),\dots, -1, 0, 1,\dots, n-1, n\}$ whose relations are the transitive closure of the following:
\begin{align*} 
i<j &\mbox{ and } -j < -i  & &\mbox{ for } -e_i + e_j \in P \\ 
i<-j &\mbox{ and } j < -i & &\mbox{ for } -e_i - e_j \in P \\ 
-i<j &\mbox{ and } -j < i & &\mbox{ for } e_i + e_j \in P \\ 
i<0 &\mbox{ and } 0 < -i & &\mbox{ for } -e_i \in P \\ 
-i < 0&\mbox{ and } 0< i & &\mbox{ for } e_i \in P \, .
\end{align*}
\end{defin}

Figure \ref{fig: fischer} shows an example of the bidirected graph representation and the Fischer representation of the same signed poset.

\begin{figure}[ht]
    \centering
    \includegraphics[width = .8\textwidth]{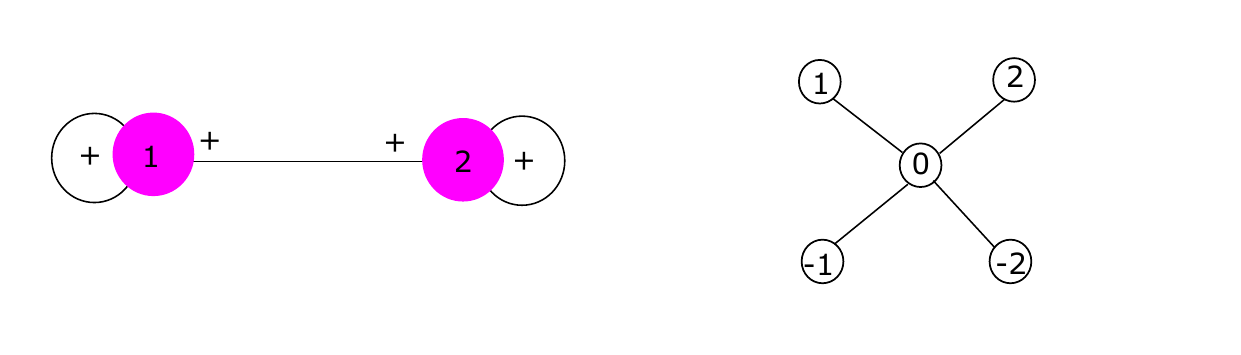}
    \caption{The left side shows the bidirected graph representation of $P := \{e_1, e_2, e_1+e_2\}$ and the right side shows the Fischer representation of~$P$. }
    \label{fig: fischer}
\end{figure}

\begin{rem}
This definition has all the inequalities reversed from Fischer's original definition to make this poset consistent with our definition of signed order polytopes.
\end{rem}

The following proposition from \cite{fischer93} classifies exactly when a poset  on
$[-n,n]$ equals $\hat{G}(P)$ for some signed poset $P$ on~$[n]$. 

\begin{prop}\label{fischer symmetry}
 A poset on $[-n,n]$ is $\hat{G}(P)$ for some signed poset $P$ if and only if 
\begin{itemize}
\item $i<j$ if and only if $-j<-i$ for all $i,j \in [-n,n] $;
\item if $-i<i$ then $-i < 0<i$ for all $i \in [-n,n]$.  
\end{itemize}

\end{prop}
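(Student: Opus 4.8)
The plan is to prove both directions of the biconditional by analyzing how the Fischer representation $\hat{G}(P)$ is built from the defining relations of a signed poset $P$, and conversely how any poset on $[-n,n]$ satisfying the two symmetry conditions decodes to a legitimate signed poset.

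\medskip

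\noindent\textbf{The forward direction.} First I would assume a signed poset $P$ on $[n]$ is given and verify that $\hat{G}(P)$ satisfies the two listed conditions. The key observation is that each of the five generating families in the definition of $\hat{G}(P)$ is \emph{manifestly antipodally symmetric}: every time we declare a relation $a < b$, we simultaneously declare $-b < -a$. For instance, $-e_i+e_j \in P$ yields both $i<j$ and $-j<-i$, and similarly for each of the other four types. Since the relation set of $\hat{G}(P)$ is the transitive closure of these pairs, I would argue that antipodal symmetry is preserved under transitive closure: if $a<b$ and $b<c$ are both in the closure, then by the symmetry of the generators their antipodes $-c<-b$ and $-b<-a$ are too, hence $-c<-a$ is as well. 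This establishes the first bullet point, $i<j \iff -j<-i$. For the second bullet, I would check that the only generators producing a relation of the form $-i<i$ (or one that forces it transitively) are exactly those that route through $0$: the relations $e_i \in P$ and $-e_i \in P$ introduce $0$ explicitly via $-i<0<i$ and $i<0<-i$ respectively, and any transitive chain forcing $-i<i$ must likewise pass through $0$ because $0$ is the unique self-antipodal element. I would make this precise by showing every relation not involving $0$ comes in antipodal pairs among nonzero elements, so a chain from $-i$ to $i$ is forced to use $0$ as an intermediate vertex.

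\medskip

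\noindent\textbf{The reverse direction.} Now I would assume a poset $Q$ on $[-n,n]$ satisfies both conditions and construct a signed poset $P$ with $\hat{G}(P) = Q$. The natural definition is to read off $P$ from the cover-type relations of $Q$ by inverting the dictionary above: put $-e_i+e_j$ into $P$ whenever $i<_Q j$ (with $i,j$ of appropriate sign), and so on for the other types, taking $P \subset B_n$ to be $\PLC{}$ of the resulting set. The work is to verify that this $P$ is a genuine signed poset---that is, it satisfies the antisymmetry condition $\alpha\in P \Rightarrow -\alpha\notin P$ and the closure condition $\PLC{P}=P$. The closure condition holds by construction. For antisymmetry, I would use the first bullet to show that the map $\alpha \mapsto -\alpha$ on roots corresponds exactly to the antipodal map $a\mapsto -a$ on $[-n,n]$, so that $\alpha$ and $-\alpha$ both lying in $P$ would force a relation and its reverse to coexist in $Q$, contradicting that $Q$ is a partial order (antisymmetric). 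The second bullet is what guarantees consistency of the relations through $0$, ensuring the loop-type roots $e_i, -e_i$ are assigned without conflict. Finally I would confirm $\hat{G}(P)=Q$ by checking that applying $\hat{G}$ to the decoded $P$ reproduces exactly the transitive closure we started with, which follows since the encoding and decoding are inverse on generators and both sides are transitively closed.

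\medskip

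\noindent\textbf{Main obstacle.} I expect the delicate step to be the reverse direction's antisymmetry verification, specifically handling the interaction between the antipodal symmetry (first bullet) and the relations through $0$ (second bullet). The subtlety is that a naive decoding might produce a subset of $B_n$ whose positive linear closure accidentally contains both $\alpha$ and $-\alpha$ even when no single generating relation is reversed in $Q$; ruling this out requires carefully using the second condition to show that $0$ cannot be ``straddled'' inconsistently, i.e., one cannot have both $-i<0<i$ and $i<0<-i$ type behavior, as that would violate antisymmetry of $Q$ at $0$. Translating the polyhedral/order-theoretic content of the two bullets into the precise statement ``$\PLC{}$ of the decoded generators is antisymmetric as a subset of $B_n$'' is where the real content lies; the rest is bookkeeping on the five relation types.
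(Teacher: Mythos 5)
The paper itself gives no proof of this proposition --- it is imported from Fischer's thesis \cite{fischer93} --- so your attempt can only be judged on its own terms, and on those terms the forward direction contains a genuine gap. Your justification of the second bullet rests on the claim that any chain in $\hat{G}(P)$ forcing $-i<i$ must pass through $0$, ``because $0$ is the unique self-antipodal element.'' That claim is false, and the paper's own Example \ref{ex:bidi minrep} refutes it: for $P=\{-e_1+e_2,\, e_1+e_2,\, e_2\}$, the generators give $-2<1$ (from $e_1+e_2$) and $1<2$ (from $-e_1+e_2$), so $-2<1<2$ is a chain from $-2$ to $2$ avoiding $0$, even though every generator comes in an antipodal pair. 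The second bullet nevertheless holds there, but only because $\PLC{P}=P$ forces $e_2=\tfrac12(e_1+e_2)+\tfrac12(-e_1+e_2)$ into $P$, whose generators then insert $-2<0<2$. The correct mechanism is algebraic, not graph-theoretic: encode each generator relation $a<b$ between nonzero elements as the root $\alpha(a,b):=-\mathrm{sign}(a)\,e_{|a|}+\mathrm{sign}(b)\,e_{|b|}\in P$ (one checks all four two-coordinate generator types have this form); along a $0$-avoiding chain $-i=c_0<c_1<\dots<c_k=i$ these roots telescope to $\sum_t \alpha(c_{t-1},c_t)=2\,\mathrm{sign}(i)\,e_{|i|}$, so $\mathrm{sign}(i)\,e_{|i|}\in\PLC{P}=P$ and the corresponding loop generators yield $-i<0<i$; if instead the chain touches $0$, transitivity gives $-i<0<i$ directly. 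So it is the positive-linear closure of $P$, which your argument never invokes in this direction, that drives the second bullet.

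The reverse direction is architecturally right (decode generators, verify antisymmetry from the first bullet, check $\hat{G}$ of the decoded set reproduces $Q$), and you correctly flag where the content lies, but you leave that content undone, and the missing lemma is precisely the same telescoping dictionary: the decoded generator set is \emph{already} PLC-closed, because a two-term sum $\alpha(a,b)+\alpha(b,c)=\alpha(a,c)$ corresponds to transitivity of $Q$, while the degenerate case $\alpha(a,b)+\alpha(b,-a)=2\,\mathrm{sign}(-a)\,e_{|a|}$ (i.e., $a<_Q b$ and $b<_Q -a$, hence $a<_Q-a$) is exactly where the second bullet enters, supplying $a<_Q 0<_Q -a$ and hence the loop root; antisymmetry of the closure then follows relation-by-relation from antisymmetry of $Q$, since $\alpha\mapsto-\alpha$ matches $\,a<b\,\mapsto\, b<a$ under the dictionary. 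Once you replace the false ``chains must pass through $0$'' step by the telescoping argument and carry that same argument through the reverse direction's closure verification, the proof is complete; as written, both halves lean on a structural fact about $\hat{G}(P)$ that does not hold.
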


Next, we establish how we can view the previously defined signed order polytopes through this lens.  

\begin{prop}\label{gorensteinhelper}
Let P be a signed poset on $[n]$.  Define a polytope $\op{{\hat{G}(P)}} \subset \R^n$ via the following inequalities:

\begin{itemize}
\item  $-1 \leq x_i \leq 1$ for all $i \in [n]$;
\item $-x_i \leq x_j$ for all $-i \leq j$, where $i, j \in [n]$;
\item $x_i \leq -x_ j$ for all $i \leq -j$, where $i, j \in [n]$;
\item $x_i \leq x_j$ for all $i \leq j$ where $i, j \in [n]$;
\item $x_i \geq 0$ for all $i \geq 0$ where $i \in [n]$;
\item $x_i \leq 0$ for all $i \leq 0$ where $i \in [n]$.  
\end{itemize}
(Note that some of these inequalities will be equivalent to each other.) Then $\op{P} = \op{{\hat{G}(P)}}$.
\end{prop}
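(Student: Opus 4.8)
\textbf{The plan is to} show that the two polytopes $\op{P}$ and $\op{\hat{G}(P)}$ are cut out by the same set of linear inequalities, so that the equality follows immediately from their hyperplane descriptions. I would treat this as two containments, or more efficiently, argue that each defining inequality of one polytope is equivalent to (or implied by) a defining inequality of the other. The left-hand polytope $\op{P}$ is defined in Definition~\ref{def:signedgeo} by $\langle \alpha, x\rangle \geq 0$ for all $\alpha \in P$ together with $x \in [-1,1]^n$; the right-hand polytope $\op{\hat{G}(P)}$ is defined by the bulleted list of inequalities indexed by the relations of the poset $\hat{G}(P)$ on $[-n,n]$.

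\textbf{The central step} is a dictionary between the generating relations of $\hat{G}(P)$ and the roots in $P$. I would go through each of the five generating relation types in the definition of the Fischer representation and check that the corresponding inequality in the bulleted list of $\op{\hat{G}(P)}$ matches the inequality $\langle \alpha, x\rangle \geq 0$ for the associated root $\alpha \in P$. Concretely: the relation $i < j$ (from $-e_i + e_j \in P$) gives $x_i \leq x_j$, which is exactly $\langle -e_i + e_j, x\rangle \geq 0$; the relation $i < -j$ (from $-e_i - e_j \in P$) gives $x_i \leq -x_j$, which is $\langle -e_i - e_j, x\rangle \geq 0$; the relation $-i < j$ (from $e_i + e_j \in P$) gives $-x_i \leq x_j$, i.e.\ $\langle e_i + e_j, x\rangle \geq 0$; the relation $i < 0$ (from $-e_i \in P$) gives $x_i \leq 0$, i.e.\ $\langle -e_i, x\rangle \geq 0$; and the relation $-i < 0$ (from $e_i \in P$) gives $-x_i \leq 0$, i.e.\ $\langle e_i, x\rangle \geq 0$. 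Here I am extending coordinates antisymmetrically, reading $x_{-i}$ as $-x_i$ and $x_0$ as $0$, which is the natural way to interpret the bulleted inequalities that reference negative or zero indices; this interpretation is forced by the symmetry properties in Proposition~\ref{fischer symmetry} and makes the mirror-image relations (e.g.\ $-j < -i$ paired with $i<j$) produce the same inequality.

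\textbf{The one genuine subtlety} is that $\hat{G}(P)$ is defined as the \emph{transitive closure} of these generating relations, so $\op{\hat{G}(P)}$ a priori has more inequalities than just those coming from $P$. I would resolve this by invoking the second signed-poset axiom $\PLC{P} = P$: transitivity in the poset $\hat{G}(P)$ corresponds exactly to taking positive linear combinations of roots, and since $P$ is closed under positive linear closure, every inequality arising from a transitively-derived relation of $\hat{G}(P)$ is already a nonnegative combination of the inequalities $\langle \alpha, x\rangle \geq 0$ for $\alpha \in \minrep(P)$, hence redundant. Thus passing to the transitive closure adds no new constraints on the polytope. The cube constraints $-1 \leq x_i \leq 1$ appear verbatim in both descriptions. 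Having matched every inequality, I conclude the two inequality systems define the same region, so $\op{P} = \op{\hat{G}(P)}$. I expect the transitive-closure/positive-linear-closure correspondence to be the only place requiring real care; the rest is the bookkeeping of matching root vectors to relations.
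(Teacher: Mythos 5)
Your proposal is correct and takes essentially the same approach as the paper, whose proof is a single sentence asserting that the two constructions yield polytopes defined by exactly the same set of inequalities. Your write-up simply supplies the details the paper leaves implicit --- the root-to-relation dictionary (with $x_{-i}$ read as $-x_i$ and $x_0$ as $0$) and the observation that the transitive closure adds only redundant inequalities because $\PLC{P} = P$ --- both of which are accurate.
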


\begin{proof}
Starting with a signed poset $P$, constructing $\op{P}$ and $ \op{{\hat{G}(P)}}$
yields polytopes defined by exactly the same set of inequalities.  
\end{proof}

We can now give the following analogue to Proposition \ref{classical gorenstein}.

\begin{prop}\label{prop: gor result}
Let $P$ be a signed poset on $[n]$.  The signed order polytope $\op{P}$ is Gorenstein if and only if $\hat{G}(P)$ is graded.  
\end{prop}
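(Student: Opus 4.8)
The plan is to use the standard reformulation that a lattice polytope $Q$ is Gorenstein exactly when some dilate $kQ$ can be translated by a lattice vector $w$ so that $kQ-w$ is reflexive; that is, $0$ lies in the interior and every facet-defining hyperplane lies at lattice distance $1$ from the origin. Applied to $Q=\op{P}$, I would start from the irredundant facet description in \eqref{eq:irrrep}, whose facets are $\langle\alpha,x\rangle\ge 0$ for $\alpha\in\minrep(P)$, $x_i\le 1$ for $i\in\pmax(P)$, and $x_i\ge -1$ for $i\in\nmax(P)$. Since the vertices of $\op{P}$ are signed filters in $\{-1,0,1\}^n$, each dilate $k\op{P}$ is a lattice polytope, and translating the reflexivity requirement facet by facet shows that $\op{P}$ is Gorenstein if and only if there is a positive integer $k$ and a lattice point $w\in\Z^n$ with
\[\langle\alpha,w\rangle = 1\ \ (\alpha\in\minrep(P)),\qquad w_i = k-1\ \ (i\in\pmax(P)),\qquad w_i = -(k-1)\ \ (i\in\nmax(P)).\]
Such a $w$ is then forced to be the unique interior lattice point of $k\op{P}$.

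Next I would convert this linear system into a statement about $\hat{G}(P)$. Define $\rho\colon[-n,n]\to\Z$ by $\rho(0)=0$ and $\rho(\pm i)=\pm w_i$ for $i\in[n]$; the order-reversing involution $a\mapsto -a$ of $\hat{G}(P)$ guaranteed by Proposition~\ref{fischer symmetry} makes $\rho$ symmetric. Reading off the defining relations of $\hat{G}(P)$, each relation $a<b$ produced by a root $\alpha\in P$ satisfies $\rho(b)-\rho(a)=\langle\alpha,w\rangle$, so the equations $\langle\alpha,w\rangle=1$ assert exactly that $\rho$ increases by $1$ across every relation coming from $\minrep(P)$. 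I would also check that $i\in\pmax(P)$ (resp.\ $i\in\nmax(P)$) precisely when $i$ (resp.\ $-i$) is a maximal element of $\hat{G}(P)$, so that the equations $w_i=\pm(k-1)$ say that all maximal elements of $\hat{G}(P)$ lie at a common top rank $k-1$ and, by symmetry, all minimal elements at $-(k-1)$. Granting that $\rho$ is an honest rank function, these two facts are exactly gradedness of $\hat{G}(P)$; conversely, a symmetric rank function on a graded $\hat{G}(P)$ produces $w_i=\rho(i)$ and $k=1+\max_a\rho(a)$ solving the system.

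The step I expect to be the main obstacle is proving that $\rho$ is genuinely a rank function, namely that every cover of $\hat{G}(P)$ --- not only those realized by $\minrep(P)$ --- has rank jump exactly $1$. This is subtle because $\minrep(P)$ need not realize all covers: the covers through the element $0$ typically arise from roots $\pm e_i\in P\setminus\minrep(P)$ and are tied to the box facets $x_i=\pm 1$ rather than to $\minrep(P)$ facets. Since $\PLC{\minrep(P)}=P$, any $\beta\in P$ can be written $\beta=\sum_\ell c_\ell\gamma_\ell$ with $c_\ell>0$ and $\gamma_\ell\in\minrep(P)$, so $\rho(b)-\rho(a)=\langle\beta,w\rangle=\sum_\ell c_\ell\ge 1$, which already shows $\rho$ is strictly order-preserving. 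To promote this to equality on covers I would run a short case analysis over the root types in the definition of $\hat{G}(P)$: when $\langle\beta,w\rangle\ge 2$ the decomposition telescopes into a saturated chain $a=a_0\lessdot\cdots\lessdot a_m=b$ with $m\ge 2$ through an intermediate element, contradicting that $a\lessdot b$ is a cover, and the same telescoping shows each relation of $\minrep(P)$ is itself a cover. A cleaner alternative would be to transport the whole question to the classical order polytope $\mathcal{O}(\hat{G}(P))\subset\R^{2n+1}$, whose slice fixed by the involution $y_a\mapsto 1-y_{-a}$ is the affine image of $\op{P}$ under $x_i=2y_i-1$; its Gorenstein center lies on this slice exactly when $\hat{G}(P)$ is graded, at which point Proposition~\ref{classical gorenstein} applies directly.
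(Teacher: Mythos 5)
Your opening reduction is sound, and it is a genuinely different route from the paper's (the paper argues straight from its counting definition of Gorenstein, building the interior point of $k\op{P}$ out of the rank function of $\hat{G}(P)$ and exhibiting the shift bijection $q\mapsto q-(\rho(1)-\rho(0),\dots,\rho(n)-\rho(0))$): combining the reflexive-translate characterization of Gorenstein with the irredundant description \eqref{eq:irrrep} does show that $\op{P}$ is Gorenstein of index $k$ exactly when your linear system for $w$ has a lattice solution. The genuine gap is the next step: solvability of that system is \emph{not} equivalent to gradedness of $\hat{G}(P)$. The element $0$ of $\hat{G}(P)$ has no coordinate in $\R^n$ and no facet of $\op{P}$ attached to it, so your system says nothing about maximal chains through $0$; your own dictionary already reveals this, since the maximal elements of $\hat{G}(P)$ are accounted for by $\pmax(P)$ and $\nmax(P)$ \emph{except} that $0$ itself is maximal (and minimal) whenever $P$ contains no short roots $\pm e_i$. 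Concretely, let $P=\PLC{\{-e_1+e_2,\,-e_2+e_3\}}=\{-e_1+e_2,\,-e_2+e_3,\,-e_1+e_3\}$, the $3$-chain viewed as a signed poset. Then $\minrep(P)=\{-e_1+e_2,\,-e_2+e_3\}$, $\pmax(P)=\{3\}$, $\nmax(P)=\{1\}$, and your system is solved by $k=2$, $w=(-1,0,1)$; indeed $\op{P}=\{x\in[-1,1]^3:\,x_1\le x_2\le x_3\}$ is a lattice translate of twice a unimodular simplex, with $h^*(z)=1+6z+z^2$, hence Gorenstein of index $2$. But $\hat{G}(P)$ is the disjoint union of the chains $1<2<3$ and $-3<-2<-1$ with an \emph{isolated} element $0$, so its maximal chains have lengths $2$, $2$, and $0$: it is not graded, and no renormalization helps, since $\rho(0)=0\neq k-1$. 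So the equivalence asserted in your second paragraph fails. (Pushed honestly, your method actually surfaces a boundary case that the statement and the paper's converse argument gloss over: that argument tacitly assumes the deficient maximal chain has freely assignable endpoint coordinates, which fails when that chain is $\{0\}$, whose value is pinned to~$0$.)

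Independently, the step you yourself flag as the main obstacle is not closed by your sketch. A PLC decomposition $\beta=\sum_\ell c_\ell\gamma_\ell$ with $\gamma_\ell\in\minrep(P)$ may have non-integral coefficients and partial sums that are not roots, e.g.\ $e_2=\frac12(-e_1+e_2)+\frac12(e_1+e_2)$, so it does not ``telescope into a saturated chain''; the implication that $\langle\beta,w\rangle\ge 2$ forces an intermediate element between the endpoints of the relation induced by $\beta$ is precisely what needs proof, and nothing in the sketch supplies it (even $\sum_\ell c_\ell\ge 1$ requires a small argument, say evaluating the coordinate functional at an index where $\beta$ has entry $\pm1$). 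Finally, the proposed ``cleaner alternative'' through $\mathcal{O}(\hat{G}(P))$ is a non sequitur: Gorensteinness does not transfer between a lattice polytope and a linear slice of it, the fixed slice of the involution sits at the half-integral height $y_0=\tfrac12$ so the lattice structure is not carried along, and the same $3$-chain example refutes the transport outright --- $\mathcal{O}(\hat{G}(P))$ is not Gorenstein by Proposition \ref{classical gorenstein}, since $\hat{G}(P)$ is not graded, while $\op{P}$ is Gorenstein.
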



\begin{proof}
Suppose $\hat{G}(P)$ is graded. We first briefly establish some facts about its
maximal chains.  The element $0$ must be in one of the maximal chains; consider
the part of the chain $0 < c_1 \dots < c_k$.  Because of Proposition
\ref{fischer symmetry}, the chain $-c_k< \dots<-c_1<0$ must also exist in
$\hat{G}(P)$, and when extended into a maximal chain must not contain any other
elements below $0$, otherwise $0 < c_1 \dots < c_k$ could be similarly extended.
Thus, $-c_k < -c_{k-1} < \dots -c_1 < 0<c_1<\dots<c_{k-1}<c_k$ is a maximal
chain.  Thus, all maximal chains in $\hat{G}(P)$ are of the same \emph{even} length.

Suppose $\hat{G}(P)$ is graded with  maximal chains of length $2k-2$ and rank function $\rho: \hat{G}(P) \rightarrow \mathbb{N}$. We show
that $\op{P}$ is Gorenstein of degree $k$.  We first verify that $(k-1) \op{P}$
has no interior points.  Consider a maximal chain $c_{-(k-1)} < c_{-(k-2)}
< \dots < c_0 < \dots < c_{k-2} < c_{k-1}$ of $\hat{G}(P)$.  In
order for a point $q$ to be in the interior of $(k-1)\op{p}$, by Proposition \ref{gorensteinhelper}, $q$ must satisfy  $-(k-1)<q_{c_{-(k-1)}} < q_{c_{-(k-2)}} < \dots < q_{c_0} < \dots < q_{c_{k-2}} < q_{c_{k-1}}<k-1$. This is not possible, since there are not $2k-1$ distinct integers between $-(k-1)$ and~$(k-1)$.  


We now construct a point $p \in k\op{P}$ and show that it is the unique interior
point of $k\op{P}$.  If in $\hat{G}(P) $, $\rho(i) = \rho(0)$ for some $i \in [n]$,
we note that  because of the symmetries outlined in Proposition \ref{fischer symmetry}, it must also be true that $\rho(-i)= \rho(0)$. In this case, we set $p_i = 0$.  Suppose $\rho(i) - \rho(0) = \ell$.  Then, we assign   $p_i = \ell$.  Note that $-(k-1)\leq \ell \leq k-1$, so $p$ satisfies the strict inequality $-(k-1)<x_i<k-1$ for all $i \in [n]$.  
By construction, the coordinates of $p$ satisfy the other strict inequalities in $k\op{{\hat{G}(P)}}$. Thus $p$ is an interior point of~$k \op{P}$.

We now show that such an interior point must be unique. As described above,
since $p$ is an interior point of $k \op{P}$, for every maximal chain in
$\hat{G}(P)$,  $c_{-(k-1)} < c_{-(k-2)} < \dots < c_0 < \dots < c_{k-2} < c_{k-1}$, $p$ must satisfy  $-k<q_{c_{-(k-1)}} < q_{c_{-(k-2)}} < \dots < q_{c_0} < \dots < q_{c_{k-2}} < q_{c_{k-1}}<k$, so each coordinate corresponding to this maximal chain is uniquely determined.  Since every element in a poset is part of a maximal chain, every coordinate of $p$ is uniquely determined. 

Finally, for all integers $t \geq k$, we establish a bijection between the sets $t\op{P}^\circ \cap \Z^d$ and $(t-k)P\cap \Z^d$. 
 Let $p \in  (t-k)P\cap \Z^d$ and consider $\phi(p) = p + (\rho(1) - \rho(0), \dots, \rho(n) - \rho(0)$).
  First, we know that $t<\phi(p)_i<t$, since $t-k \leq p_i \leq t-k$ and $ -k<\rho(i) - \rho(0)<k$. 
 Furthermore, since $p$ satisfies the inequalities of  $k\hat{G}(P)$ and $(\rho(1) - \rho(0), \dots, \rho(n) - \rho(0))$ satisfy the strict inequalities, their sum $\phi(p)$ also satisfies the strict inequalities, so $\phi(p)$ is indeed an interior point of $t\op{P}$.  
We now need to show that $\phi: (t-k)\op{P} \rightarrow t \op{P}^\circ$ is bijective.
  It suffices to show that the map is surjective.
  Suppose we have a point $q \in t\op{P}^\circ$, and consider $q' = q - (\rho(1) - \rho(0), \dots, \rho(n) - \rho(0))$, so that $\phi(q') = q$.  Note that since $-(t-1)\leq q_i \leq t-1$ and $-(k-1)\leq\rho(i)-\rho(0)\leq k-1$ , $-(t-k) \leq q'_i \leq t-k$.
  Now, suppose $i \leq j \in \hat{G}(P)$.  Then, since $q$ is in the relative interior of $t\op{P}$, $q_i  - \rho(i) \leq q_j  - \rho(j)$, which implies $q'_i \leq q'_j$.  So $q'$ is indeed in $(t-k)\op{P}$.

We now suppose that $\hat{G}(P)$ is not graded, and show that $\op{P}$ cannot be Gorenstein. For $\op{P}$ to be Gorenstein of degree $k$, the interior points of the cone $\hom{\op{P}}$ must be the integer points of the shifted cone $\hom{\op{P}} + (p, k)$, where $p$ is the unique interior point of $k\op{P}$.  Let $2a-2$ be the length of the longest maximal chain in  $\op{{\hat{G}(P)}}$.
 Based on an argument above, we know that there are no interior points in $b\op{P}$ for any nonnegative integer $b<a$. 
 We now consider $a\op{P}$, the first dilate with at least one interior point $p$.   
However, since $\hat{G}(P)$ is not graded, there must be a maximal chain of a length $k$ where $k<2a-2$, $  c_{1}  < \dots c_{k}$.
   Without loss of generality, let $p_{c_{1}} = -(a-1)$ and $p_{c_k} = a-1$, which is always possible since there are no elements above $c_k$ nor below $c_{1}$. 
 Since the maximal chain we are considering has length less than $2a-a$, we know that there is some   $1\leq i \leq k-1$ such that $p_{c_{i+1}} - p_{c_i} > 1$.
  We now construct an interior point $p'$ of $(a+1) \op{P}$, where we add $1$ to all the coordinates $p_j$ where $j \geq c_i$ in $\hat{G}(P)$ except for $p_{c_{i+1}}.$ 
Note that $(p',a+1) \in \hom{\op{P}}$ is formed from adding a point that is not compatible with the ordering to $(p,a)$, so  $(p',a+1)$ is an interior point of $\hom{\op{P}}$ that does not lie in the shifted version. 
 Thus, $\op{P}$ is not Gorenstein.  
\end{proof}

The following result applies to our situation.  

\begin{thm}[Bruns--R{\"o}mer~\cite{BH07}]
 A Gorenstein lattice polytope $P$ with a regular unimodular triangulation has a unimodal $h^*$-vector.
\end{thm}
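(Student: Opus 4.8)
The plan is to translate the geometric hypotheses into ring-theoretic ones and then extract unimodality from a Gorenstein duality pairing. First I would use the regular unimodular triangulation $\Delta$ of $P$. Since every maximal simplex of $\Delta$ is unimodular, a half-open shelling computation of exactly the kind used in Lemma \ref{lem: h star simp} and Proposition \ref{prop: signed h star} identifies the Ehrhart $h^*$-polynomial of $P$ with the combinatorial $h$-polynomial of the simplicial complex $\Delta$, so that $h^*(P;z) = h(\Delta;z)$. Equivalently, the existence of a unimodular triangulation forces $P$ to be normal, so the Ehrhart ring and the semigroup (toric) ring $R = k[P]$ coincide, and a \emph{regular} unimodular triangulation corresponds, via Sturmfels' theorem, to a squarefree initial ideal of the toric ideal. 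Hence $k[\Delta]$ is a flat Gr\"obner degeneration of $R$ and the two share a Hilbert series, so $h^*(P)$ equals the $h$-vector of $R$.

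Next I would record the two qualitative consequences of the hypotheses. Regularity makes $\Delta$ shellable, so its Stanley--Reisner ring $k[\Delta]$, and therefore $R$ (which degenerates to it), is Cohen--Macaulay. The Gorenstein hypothesis on $P$ is equivalent to $R$ being a Gorenstein ring, which forces the $h$-vector to be symmetric; this recovers the palindromy already built into the definition of Gorenstein. After these reductions the theorem becomes a purely algebraic assertion: the $h$-vector of a Cohen--Macaulay Gorenstein standard graded algebra arising from such a squarefree degeneration is unimodal.

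The heart of the argument, and the step I expect to be the main obstacle, is upgrading symmetry to unimodality, since symmetry together with the Cohen--Macaulay property alone is \emph{not} enough (there are Artinian Gorenstein algebras whose Hilbert function fails to be unimodal, and at the relevant time the $g$-theorem for general simplicial spheres was unavailable, so one cannot simply close the triangulated ball into a sphere and quote it). My approach would be to reduce $R$ modulo a linear system of parameters to an Artinian Gorenstein algebra $A$ whose Hilbert function is exactly the $h$-vector, with socle concentrated in a single top degree $s$, giving a perfect pairing $A_i \times A_{s-i} \to A_s \cong k$. The target is then a weak-Lefschetz-type statement: a linear form $\ell$ for which every multiplication map $\times \ell \colon A_i \to A_{i+1}$ has maximal rank, which for a symmetric Hilbert function yields unimodality at once. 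Producing such an $\ell$ is the genuine content, and it is here that both hypotheses must be used together, exploiting the squarefree (shellable) degeneration alongside the reflexive duality that a Gorenstein polytope enjoys after dilation (a suitable multiple $sP$ is, up to translation, reflexive) to transfer the rank computation to a setting where it can be controlled.

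Finally I would assemble the pieces: unimodality of the $h$-vector of $A$ gives unimodality of the $h$-vector of $R$, which equals $h^*(P;z)$, completing the proof. In the application relevant to this paper one then combines the result with the Gorenstein characterization of Proposition \ref{prop: gor result} and with the canonical triangulation $\mathcal{T}$, which is regular since it is induced by a hyperplane (type-$B$ Coxeter) arrangement, to conclude that every Gorenstein signed order polytope has a symmetric and unimodal $h^*$-polynomial.
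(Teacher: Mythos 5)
First, note that the paper does not prove this statement at all: it is imported verbatim from Bruns--R\"omer \cite{BH07}, so your attempt must be judged against that source rather than against anything in the text. Your reductions in the first two paragraphs are correct and standard: a unimodular triangulation gives normality, so the Ehrhart ring coincides with the semigroup ring $R=k[P]$; regularity gives, via Sturmfels' correspondence, a squarefree initial ideal, hence a flat degeneration to the Stanley--Reisner ring of the triangulation with the same Hilbert series; shellability gives Cohen--Macaulayness; and the Gorenstein hypothesis on $P$ makes $R$ Gorenstein with symmetric $h$-vector. All of this matches the setup in \cite{BH07}.

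The genuine gap is exactly where you announce it: you never carry out the step that upgrades symmetry to unimodality. Declaring a weak-Lefschetz element for the Artinian reduction of $R$ as ``the target'' and gesturing at ``exploiting the squarefree degeneration alongside reflexive duality'' is not an argument, and no direct construction of such an $\ell$ for these algebras is known --- indeed, if one could produce Lefschetz elements at this level of generality one would be proving substantially more than the theorem asks. Bruns and R\"omer take a different route precisely to avoid this: they use the regular unimodular triangulation and the Gorenstein condition to show that, after passing to the squarefree initial complex and splitting off a join with a simplex (the ``core'' of the complex), the $h^*$-vector of $P$ equals the $h$-vector of the \emph{boundary complex of a simplicial polytope}; unimodality then follows from the classical $g$-theorem (Stanley's necessity, via hard Lefschetz for projective toric varieties --- a known theorem one quotes, not a Lefschetz element one builds by hand). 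Your parenthetical objection that one ``cannot simply close the triangulated ball into a sphere and quote the $g$-theorem'' because the $g$-theorem for arbitrary spheres was unavailable misses the point of the regularity hypothesis: regularity is exactly what guarantees the resulting sphere is polytopal, so the classical $g$-theorem suffices. Your final paragraph (combining the theorem with Proposition \ref{prop: gor result} and the regularity of $\mathcal{T}$, which does hold since $\mathcal{T}$ is cut out by a hyperplane arrangement) is fine, but it concerns the application, not the proof of the statement itself.
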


\begin{cor}
Let $P$ be a signed poset on $[n]$.  If $\hat{G}(P)$ is graded, then the $h^*$-polynomial of $P$ is unimodal.
\end{cor}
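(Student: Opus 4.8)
The plan is to invoke the Bruns--R\"{o}mer theorem stated immediately above, which requires two properties of $\op{P}$: that it is Gorenstein, and that it admits a regular unimodular triangulation. The first ingredient is immediate, since $\hat{G}(P)$ is graded by hypothesis, Proposition~\ref{prop: gor result} gives that $\op{P}$ is Gorenstein. The polytope $\op{P}$ is a full-dimensional lattice polytope (its vertices are signed filters, lying in $\{-1,0,1\}^n$ by Proposition~\ref{prop:signed filters}), and unimodularity of the triangulation $\mathcal{T}$ was already established in Section~\ref{sec:altdes}. Thus the only remaining point, and the crux of the argument, is to show that $\mathcal{T}$ is \emph{regular}.

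To establish regularity I would first recall that $\mathcal{T}$ is the restriction to $\op{P}$ of the triangulation of the cube $[-1,1]^n$ whose maximal cells are the simplices $\Delta_\sigma$, $\sigma \in S_n^B$, and that $\op{P} = \bigcup_{\sigma \in \JH(P)} \Delta_\sigma$ is a union of full-dimensional cells of that cube triangulation. The key observation is a restriction principle: the restriction of a regular triangulation to a subcomplex (a union of its cells) is again regular. Indeed, if a height function $\omega$ on the vertices of the cube triangulation induces it via lower hull, then the same $\omega$ restricted to the vertices lying in $\op{P}$ induces $\mathcal{T}$, because the lower faces lying over $\op{P}$ are exactly those lower faces of the full lifting that project into $\op{P}$. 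Hence it suffices to produce one regular lifting of the cube triangulation.

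For the cube triangulation itself, its cells $\Delta_\sigma$ are cut out by the reflection hyperplanes of $S_n^B$, namely $x_i = 0$ and $x_i = \pm x_j$; each $\Delta_\sigma$ is the intersection of $[-1,1]^n$ with the normal cone $K_\sigma$ at the vertex $\sigma v$ of the type-$B$ permutohedron $\mathcal{Q}_v := \conv\{\sigma v : \sigma \in S_n^B\}$, for a generic point $v \in \R^n$. The support function $\Omega(x) := \max_{\sigma} \langle \sigma v, x\rangle$ of $\mathcal{Q}_v$ is then a convex piecewise-linear function that is affine exactly on each $K_\sigma$, hence on each $\Delta_\sigma$, and strictly convex across their common facets. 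Taking $\omega$ to be the restriction of $\Omega$ to the (lattice) vertices of the cells $\Delta_\sigma$ therefore realizes the cube triangulation as a regular triangulation. Combined with the restriction principle, $\mathcal{T}$ is a regular unimodular triangulation of $\op{P}$, and the Bruns--R\"{o}mer theorem yields unimodality of $h^*_{\op{P}}(z)$.

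The main obstacle is the regularity verification, the Gorenstein and unimodularity inputs being already in hand. I would take particular care to confirm that the support-function lifting $\Omega$ is genuinely strictly convex across each wall of the arrangement, which holds precisely because $v$ is chosen generic so that adjacent chambers carry distinct linear functionals. This strict convexity is what guarantees that the induced regular subdivision of $[-1,1]^n$ is the full triangulation into the $\Delta_\sigma$ rather than some coarser subdivision, and hence that the restricted triangulation of $\op{P}$ is exactly $\mathcal{T}$.
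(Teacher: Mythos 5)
Your proposal is correct and takes essentially the same route as the paper, which likewise deduces the corollary immediately from the Bruns--R\"{o}mer theorem together with Proposition~\ref{prop: gor result} and the unimodular triangulation $\mathcal{T}$ of Section~\ref{sec:altdes}; the paper leaves the regularity of $\mathcal{T}$ implicit, whereas you verify it via the support function of the type-$B$ permutohedron. One caution: the ``restriction principle'' you invoke is false for arbitrary subcomplexes of a regular triangulation, but your argument is valid here precisely because the support $\op{P}$ is a \emph{convex} union of cells, so the restricted convex lifting still induces exactly the restricted triangulation.
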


Stembridge~\cite{S08} extended Reiner's work with signed posets to any root system. He defines a generalization of order cones and signed order cones for other root systems, calling these Coxeter cones.
\begin{defin}[Stembridge~\cite{S08}]
Let $\Phi$ be any root system in $\mathbb{R}^n$, and let $\Psi$ be a subset of
$\Phi$.  Then the \emph{Coxeter cone} of $\Psi$ is
\[\Delta(\Psi) \ := \ \left\{x \in \mathbb{R}^n: \langle x, \beta \rangle \geq 0 \mbox{ for any } \beta \in \Psi \right\} . \]
\end{defin}
Viewing these cones as simplicial complexes and defining a general notion of when these complexes are graded, he used algebraic methods to give a condition on when the $h$-vectors of these cones are symmetric and unimodal.  

\begin{thm}[Stembridge~\cite{S08}]  \label{coxcones} If a Coxeter cone is \df{graded}, then its $h$-polynomial is symmetric and unimodal.
\end{thm}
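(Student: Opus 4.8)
The plan is to follow Stembridge's realization of the Coxeter cone $\Delta(\Psi)$ as a simplicial complex cut out by the reflection arrangement of $\Phi$. First I would intersect $\Delta(\Psi)$ with the hyperplanes $\beta^\perp$ for $\beta \in \Phi$; because reflection arrangements are simplicial, the chambers lying inside the cone are simplicial and are indexed by a Jordan--H\"{o}lder-type set of Coxeter group elements $w$, exactly as the maximal faces $\Delta_\sigma$ of the triangulation $\mathcal{T}$ are indexed by $\JH(P)$ in the type-$B$ case treated in Proposition~\ref{prop: signed h star}. After a half-open decomposition of this triangulation, the $h$-polynomial of the cone becomes $\sum_w z^{\mathrm{des}(w)}$ for a suitable Coxeter descent statistic, directly generalizing the natural-descent formula of Proposition~\ref{prop: signed h star}.

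With this description in hand, symmetry of the $h$-vector is the first target. The gradedness hypothesis supplies a rank function on the underlying combinatorial data, and I would use it to build an explicit involution on the indexing set that complements the descent statistic, sending $h_i \mapsto h_{d-i}$; this is the combinatorial shadow of the shift $(\rho(1)-\rho(0), \dots, \rho(n)-\rho(0))$ used in the proof of Proposition~\ref{prop: gor result}. Equivalently, gradedness should force the associated Stanley--Reisner ring to be Gorenstein, so that the Dehn--Sommerville relations make the $h$-vector palindromic.

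Unimodality is where the real work lies, and I expect it to be the main obstacle: symmetry never yields unimodality by itself, so a genuine positivity input is required. The cleanest route available from the material above is to truncate $\Delta(\Psi)$ to a lattice polytope carrying a regular unimodular triangulation, check that it is Gorenstein, and then invoke the Bruns--R\"{o}mer theorem~\cite{BH07} quoted just above. Since a regular unimodular triangulation need not be available for an arbitrary graded Coxeter cone, I would instead follow Stembridge's algebraic argument~\cite{S08}, producing a weak-Lefschetz element on the graded ring attached to the cone; its existence upgrades the palindromic $h$-vector to a unimodal one. Establishing this Lefschetz/positivity step in the full generality of a graded Coxeter cone is the crux of the theorem, and everything else is bookkeeping around the chamber decomposition and the grading involution.
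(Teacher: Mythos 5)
You should first be aware that the paper does not prove Theorem~\ref{coxcones} at all: it is quoted verbatim from Stembridge~\cite{S08}, and the surrounding text explicitly describes its proof as algebraic and external, with Proposition~\ref{prop: gor result} offered only as an Ehrhart-theoretic interpretation of the type-$B$ special case. So there is no internal argument to compare against, and a blind proof attempt would need to be essentially self-contained (or reproduce Stembridge's argument) to count as a proof.

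Measured against that standard, your sketch has a genuine gap exactly where you locate the crux. The first two stages are fine and consistent with the type-$B$ picture in the paper: the chamber decomposition by the Coxeter fan with a descent generating function generalizes Proposition~\ref{prop: signed h star}, and symmetry via an Eulerian/Gorenstein$^*$ property plus Dehn--Sommerville is the standard mechanism (though even there, that gradedness forces the Gorenstein$^*$ property is asserted rather than argued). But for unimodality, your first route --- truncating to a lattice polytope and invoking Bruns--R\"omer~\cite{BH07} --- fails in general, as you half-acknowledge: the identification of the $h$-polynomial of a Coxeter cone with the $h^*$-polynomial of a Gorenstein lattice polytope is made in~\cite{S08} only in types $A$ and $B$, and no such polytope is available for an arbitrary graded Coxeter cone. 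Your fallback, ``follow Stembridge's algebraic argument, producing a weak-Lefschetz element,'' is circular as a proof of Stembridge's own theorem; worse, producing a (weak) Lefschetz element on the face ring of a Gorenstein$^*$ complex is not something one can do generically --- in full generality that is $g$-theorem-level hard --- so the positivity input must come from the specific algebraic structure of the reflection arrangement (Stembridge works with modules over invariant/coinvariant rings, not a generic Lefschetz element). The decisive step is therefore neither supplied nor correctly sourced, and the closing claim that ``everything else is bookkeeping'' inverts where the difficulty actually lies.
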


The definition of \emph{graded} is quite technical; for a full definition see \cite{S08}.
 
Proposition \ref{prop: gor result} can be seen as an Ehrhart-theoretic
interpretation of the type-$B$ case of Theorem~\ref{coxcones}, using a geometric proof method as opposed to the algebraic proof method in \cite{S08}.  Below, we summarize the connection between Proposition \ref{prop: gor result} and Theorem~\ref{coxcones}. 

We first interpret Theorem~\ref{coxcones} in the type-$B$ case.  In Examples 5.2(b) and
6.4(b), Stembridge notes that his definition of a graded Coxeter cone, when restricted to
the type-$B$ case, results in exactly the Coxeter cones of type $B$ corresponding to signed posets with a graded Fischer representation.  Thus, these Coxeter cones result in simplicial complexes with a symmetric and unimodal $h$-polynomial.  

We then make the transition from the $h$-polynomial of a type-$B$ Coxeter cone to the $h^*$-polynomial of $\op{P}$ of the corresponding signed poset $P$.  In Section 4 of \cite{S08}, Stembridge notes that the $h$-polynomial of his type $A$ and $B$ Coxeter cones are identical to the $h^*$-polynomial of a certain lattice polytope.  His construction in the type-$B$ case, described in algebraic terms, gives the same polytope as $\op{P}$. Thus we see that Proposition \ref{prop: gor result} can be seen as a special case of the broad algebraic result in \cite{S08}.


\section{Chain Polytopes}\label{sec:chainpoly}

In \cite{S86}, Stanley establishes some properties of chain polytopes.

\begin{defin}
Let $\Pi$ be a poset.  An \df{antichain} of $\Pi$ is a subset $I$ of the elements of $\Pi$ such that for any $i,j \in I$, neither $i <j$ nor $j<i$ in $\Pi$.  
\end{defin}

\begin{prop}[Stanley~\cite{S86}]\label{prop: antichain}
The vertices of $\mathcal{C}(\Pi)$ are given by the $\{0,1\}$-indicator vectors of the antichains of $\Pi$.
\end{prop}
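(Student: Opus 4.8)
The plan is to prove both inclusions between the chain polytope $\mathcal{C}(\Pi)$ and the convex hull of antichain indicator vectors, and then to upgrade the ``convex hull'' statement to a genuine vertex characterization. Let me denote by $\chi_I \in \{0,1\}^\Pi$ the indicator vector of a subset $I \subseteq \Pi$. First I would check the easy direction: each antichain indicator $\chi_I$ lies in $\mathcal{C}(\Pi)$. This is immediate since $\chi_I \geq 0$ coordinatewise, and for any chain $c_1 <_\Pi \dots <_\Pi c_k$ the sum $\sum_j \chi_I(c_j)$ counts how many of the $c_j$ lie in $I$; but a chain meets an antichain in at most one element, so this sum is at most $1$. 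Hence $\conv\{\chi_I : I \text{ antichain}\} \subseteq \mathcal{C}(\Pi)$.

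The reverse inclusion is the heart of the matter: every vertex of $\mathcal{C}(\Pi)$ is in fact an antichain indicator. Here I would argue that a vertex $x$ of $\mathcal{C}(\Pi)$ must be a $\{0,1\}$-vector, and that its support is an antichain. For the $\{0,1\}$-claim, the standard approach is to suppose $x$ is a vertex with some coordinate $x_p \notin \{0,1\}$ and produce two distinct points $x^+, x^-$ in $\mathcal{C}(\Pi)$ with $x = \tfrac12(x^+ + x^-)$, contradicting extremality. One natural way is to use the chain structure: consider a maximal chain $C$ through $p$ along which the defining chain-sum constraint of $\mathcal{C}(\Pi)$ is tightest, and perturb the fractional coordinates along $C$ by $\pm\varepsilon$ in a way that preserves all the inequalities $\sum_{c \in C'} x_c \leq 1$ and $x_c \geq 0$. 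Once $x$ is known to be a $0/1$-point, its support $S = \{p : x_p = 1\}$ must be an antichain, for if $a <_\Pi b$ with $a,b \in S$ then the two-element chain $a <_\Pi b$ would force $x_a + x_b = 2 > 1$, violating the chain constraint.

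The main obstacle I expect is making the perturbation argument for the $\{0,1\}$-property clean, since the chain-sum inequalities couple coordinates across overlapping chains, so a naive coordinatewise perturbation can break a constraint on a chain that does not contain $p$. The careful way is to find a direction vector $v$ supported on the set of fractional coordinates of $x$ such that $x \pm \varepsilon v$ both remain feasible for small $\varepsilon$; feasibility of the chain constraints that are \emph{not} tight is automatic for small $\varepsilon$, so one only needs $v$ to respect the \emph{tight} constraints, i.e. $\sum_{c \in C} v_c = 0$ for every tight chain $C$ and $v_p \neq 0$. A dimension/rank count shows such a $v$ exists precisely when $x$ fails to be a vertex, which is the contrapositive we want. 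Alternatively, and perhaps more cleanly, I would invoke total unimodularity or a direct combinatorial argument showing that the vertices of $\mathcal{C}(\Pi)$ are integral; since $\mathcal{C}(\Pi) \subseteq [0,1]^\Pi$, integral vertices are automatically $0/1$, and the antichain characterization of the support then finishes the proof. Combining the two inclusions with the fact that every vertex is an antichain indicator (and conversely each antichain indicator, being a $0/1$-point not expressible as a convex combination of others in the polytope, is a vertex) yields exactly the claimed vertex set.
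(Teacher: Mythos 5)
Your outer skeleton is sound, and three of your steps are correct and essentially forced: antichain indicators lie in $\mathcal{C}(\Pi)$ because a chain meets an antichain at most once; a $0/1$-point of $\mathcal{C}(\Pi)$ has antichain support, since $a <_\Pi b$ with both in the support violates the two-element chain inequality; and since singleton chains give $\mathcal{C}(\Pi) \subseteq [0,1]^\Pi$, every $0/1$-point of $\mathcal{C}(\Pi)$ is automatically a vertex. (For calibration: the paper gives no proof of this proposition at all --- it is quoted from Stanley \cite{S86} --- so the comparison is with Stanley's original argument.) The genuine gap is the integrality step, and your treatment of it is circular. The assertion that a direction $v$, supported on the fractional coordinates and satisfying $\sum_{c \in C} v_c = 0$ for every tight chain $C$, ``exists precisely when $x$ fails to be a vertex'' is just the definition of a vertex restated; the theorem demands that you \emph{construct} such a $v$ whenever a coordinate is fractional, and no construction is given. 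Moreover, no structure-free rank count can supply one: for a general polytope cut out by nonnegativity and clique inequalities, fractional vertices do exist --- for the $5$-cycle, $\left(\tfrac12,\dots,\tfrac12\right)$ is a vertex of the corresponding polytope --- so any correct argument must use the poset (equivalently, the perfection of comparability graphs, of which $C_5$ is not one) in an essential way.

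Your proposed rescue via total unimodularity also fails: the maximal-chain--element incidence matrix of a poset is \emph{not} TU in general. Take the poset on $\{a,b,c,d,e,f\}$ with cover relations $a \lessdot b \lessdot c$, $a \lessdot d \lessdot c$, $f \lessdot b$, $b \lessdot e$; then $\{a,b,e\}$, $\{f,b,c\}$, $\{a,d,c\}$ are maximal chains, and restricting these three rows to the columns $a,b,c$ gives $\left(\begin{smallmatrix}1&1&0\\0&1&1\\1&0&1\end{smallmatrix}\right)$, of determinant $2$. So integrality of $\mathcal{C}(\Pi)$ is true but is genuinely the content of the theorem, not a consequence of TU. A correct elementary repair, in the spirit of \cite{S86}, shows directly that every $v \in \mathcal{C}(\Pi)$ is a convex combination of antichain indicators: let $A$ be the set of maximal elements of $\mathrm{supp}(v)$ (an antichain) and $\lambda := \min_{t \in A} v_t$; then $v - \lambda \chi_A \in (1-\lambda)\,\mathcal{C}(\Pi)$. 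Indeed, a chain meeting $A$ has $v$-sum at most $1$, so subtracting $\lambda$ once keeps it at most $1-\lambda$; and a chain $C$ disjoint from $A$ whose top supported element is $t$ can be extended by some $s \in A$ with $s >_\Pi t$, whence $\sum_{c \in C} v_c \leq 1 - v_s \leq 1 - \lambda$. Since at least one coordinate of $A$ is zeroed out, induction on $|\mathrm{supp}(v)|$ expresses $v$ as a convex combination of the $\chi_A$, and combining this with your three correct steps yields the stated vertex description.
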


\begin{thm}[Stanley~\cite{S86}] \label{thm: order and chain} Let $\Pi$ be a poset on $[n]$. 
\begin{itemize}
\item $\mathcal{C}(\Pi)$ and $\mathcal{O}(\Pi)$  have the same $h^\ast$-polynomial.
\item $\mathcal{C}(\Pi)$ and $\mathcal{O}(\Pi)$  are combinatorially equivalent if and only if $\Pi$ does not contain the poset shown in Figure \ref{fig: forbidden}  as a subposet.
\end{itemize}
\end{thm}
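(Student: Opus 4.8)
The plan is to treat the two bullet points separately: the first is a direct consequence of Stanley's transfer map, while the second requires a finer comparison of the two face lattices and is where the real work lies.

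For the equality of $h^*$-polynomials I would introduce Stanley's piecewise-linear \emph{transfer map} $\phi \colon \mathcal{O}(\Pi) \to \mathcal{C}(\Pi)$ given by $\phi(x)_p := x_p - \max\{x_q : q \lessdot_\Pi p\}$, with the maximum over the empty set read as $0$. First I would verify that $\phi(x) \in \mathcal{C}(\Pi)$: each coordinate is nonnegative since $x$ is order-preserving, and for any chain $c_1 <_\Pi \dots <_\Pi c_k$ the bound $\phi(x)_{c_i} \le x_{c_i} - x_{c_{i-1}}$ (using that some element covered by $c_i$ sits weakly above $c_{i-1}$) makes the sum telescope to at most $x_{c_k} \le 1$. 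Next I would exhibit the inverse $\phi^{-1}(y)_p = \max\{y_{c_1} + \dots + y_{c_k}\}$, the maximum over chains $c_1 <_\Pi \dots <_\Pi c_k = p$ topped at $p$, and check it lands in $\mathcal{O}(\Pi)$. The crucial point is that both polytopes carry the canonical triangulation into unimodular simplices indexed by the linear extensions (the Jordan--H\"older set) of $\Pi$, and on each simplex $\phi$ is the restriction of a unimodular integer affine map, unipotent and triangular with respect to a linear extension. Hence $\phi$ restricts to a bijection $t\mathcal{O}(\Pi) \cap \Z^n \to t\mathcal{C}(\Pi) \cap \Z^n$ for every dilation factor $t$, so the two polytopes share an Ehrhart polynomial and therefore an $h^*$-polynomial; concretely both equal $\sum_{\tau \in \JH(\Pi)} z^{\mathrm{des}(\tau)}$ as in Proposition~\ref{prop: classic h star}.

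For the combinatorial equivalence, let $\mathbf{X}$ denote the forbidden five-element poset of Figure~\ref{fig: forbidden}, that is, the poset with relations $a <_\Pi c$, $b <_\Pi c$, $c <_\Pi d$, and $c <_\Pi e$. I would first record the two facet descriptions. The facets of $\mathcal{O}(\Pi)$ correspond to minimal elements ($x_a \ge 0$), maximal elements ($x_a \le 1$), and cover relations ($x_a \le x_b$); the facets of $\mathcal{C}(\Pi)$ correspond to the $n$ inequalities $x_p \ge 0$ together with one inequality $\sum_{c_i \in C} x_{c_i} \le 1$ per maximal chain $C$. Since filters and antichains are always in bijection, the two polytopes always have the same vertex count, so any obstruction must appear at the facet level or higher. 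For the ``only if'' direction I would localize the discrepancy to a copy of $\mathbf{X}$: its central element carries four cover relations but lies on four maximal chains, and a direct count gives $\mathcal{O}(\mathbf{X})$ eight facets against $\mathcal{C}(\mathbf{X})$ nine. I would then argue that embedding $\mathbf{X}$ as an induced subposet preserves this imbalance, so no face-lattice isomorphism can exist.

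For the ``if'' direction, which I expect to be the main obstacle, I would show that $\mathbf{X}$-freeness forces the cover relations and the maximal chains to be in a structure-preserving correspondence, so that the facet systems of $\mathcal{O}(\Pi)$ and $\mathcal{C}(\Pi)$ match not merely in number but in their incidence with vertices. The intended mechanism is an induction on $|\Pi|$, peeling off a maximal element $m$ and analyzing how the cover relations into $m$ interact with the maximal chains terminating at $m$; avoiding $\mathbf{X}$ is exactly the condition that prevents a single element from simultaneously ``branching'' both below and above, which is what would desynchronize the two face lattices. The delicate part is not the facet bijection itself but promoting it to a genuine isomorphism of full face lattices: I would need to track vertex–face incidences (antichains on the chain side, filters on the order side) across all dimensions and confirm they are preserved, and this bookkeeping, together with the propagation argument in the converse direction, is where I anticipate the bulk of the technical effort.
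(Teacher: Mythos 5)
This theorem is quoted in the paper from the literature with no proof supplied (it is stated as background for Section~\ref{sec:chainpoly}), so the comparison can only be against the known proofs. Your first bullet is handled correctly and is exactly the standard argument: the transfer map $\phi(x)_p = x_p - \max\{x_q : q \lessdot_\Pi p\}$, the telescoping bound along chains, the explicit inverse via maxima over chains ending at $p$, and the observation that $\phi$ is piecewise-unimodular on the canonical triangulation, hence a bijection on lattice points of every dilate. That part of the proposal is sound and essentially Stanley's proof.

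The second bullet is where there is a genuine gap, in both directions. For ``only if,'' your local computation ($8$ facets for $\mathcal{O}(\mathbf{X})$ versus $9$ for $\mathcal{C}(\mathbf{X})$) is correct, but the claim that ``embedding $\mathbf{X}$ as an induced subposet preserves this imbalance'' is unsubstantiated and is precisely the missing step: an induced subposet does not induce any map of face lattices or faces of the ambient polytopes, so a local facet deficit says nothing a priori about $\mathcal{O}(\Pi)$ versus $\mathcal{C}(\Pi)$ globally. What is needed is the global counting statement: the facets of $\mathcal{O}(\Pi)$ number $\#\{\text{minimal}\} + \#\{\text{maximal}\} + \#\{\text{covers}\}$, those of $\mathcal{C}(\Pi)$ number $n + \#\{\text{maximal chains}\}$, and one must prove the inequality $f(\mathcal{C}(\Pi)) \geq f(\mathcal{O}(\Pi))$ \emph{with equality if and only if $\Pi$ is $\mathbf{X}$-free} --- otherwise nothing rules out the counts being rebalanced by structure elsewhere in $\Pi$. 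For ``if,'' you explicitly defer the entire argument (``where I anticipate the bulk of the technical effort''), so no proof is offered; the known route constructs, for $\mathbf{X}$-free posets, an explicit affine unimodular (in particular combinatorial) equivalence between the two polytopes, which is stronger and cleaner than trying to synchronize the two face lattices dimension by dimension. One bibliographic caution: while the transfer-map result and the $\mathbf{X}$ example are Stanley's, the full characterization in the second bullet is due to Hibi and Li, and their facet-counting lemma is exactly the ingredient your outline is missing.
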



\begin{figure}[ht]
    \centering
    \includegraphics[width=.2\textwidth]{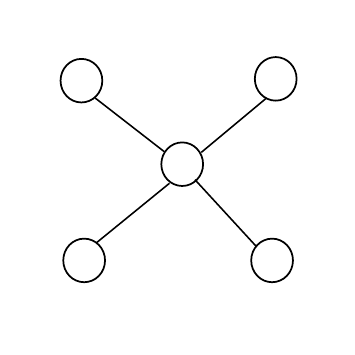}
    \caption{The forbidden poset in Theorem \ref{thm: order and chain}. }

    \label{fig: forbidden}
\end{figure}

In this section, we suggest a definition for signed chain polytopes and examine the properties of said polytopes. 
(We do not claim this is the only sensible definition for the concept of a
signed chain polytope.)
First, we define a chain of a signed poset.  

\begin{defin}
A chain on a signed poset $P$ on $[n]$ is an ordered pair $(C,S)$, where $C = (c_1, \dots, c_m) \in [n]^m$ and $S = (s_1, \dots, s_{m-1}) \in \{-1,1\}^{m-1}$ such that for each $i \in [m-1]$ there exists $\alpha_i \in P$ that satisfy:

\begin{itemize}
\item if $s_i = 1$, then $\alpha_i = \pm(e_{c_i} - e_{c_{i+1}})$, and if $s_i = -1$, then $\alpha_i = \pm(e_{c_i} + e_{c_{i+1}})$;
\item $\alpha_i +\alpha_{i+1} \in P$.
\end{itemize}
\end{defin}

We now give one definition of a chain polytope.

\begin{defin} \label{def: signed chain}
The \df{signed chain polytope} $\mathcal{C}_P$ of a signed poset $P$ on $[n]$ is the intersection of inequalities of the form
\[-1\leq x_{c_1} + s_1x_{c_2} + s_1s_2x_{c_3} + \dots + s_1s_2\dots s_{m-1}x_{c_m} \leq 1\]
for each chain $(C,S)$ of $P$. 
\end{defin}

We note that, in the definition of the chain polytope for a classical poset, it
suffices to have an inequality for each \emph{maximal} chain, since the other
inequalities are implied by these. The equivalent statement is not true for
chain polytopes of signed posets. 

We also introduce another useful class of polytopes, directly related to Gorenstein polytopes. 
\begin{defin}
A lattice polytope is \df{reflexive} if its hyperplane description can be written as $Ax \leq 1$ for an integral matrix~$A$. 
\end{defin}
\noindent There are many equivalent definitions of reflexive polytopes, for example relating to the \df{duals} of polytopes.  A reflexive polytope can also be described as a Gorenstein polytope with Gorenstein index $1$.  For a full description, see, for example, \cite{Z12}. 

Directly from the definitions above, we make the following observation:

\begin{prop}
For any signed poset $P$, the signed chain polytope $\mathcal{C}_P$ is reflexive, and thus Gorenstein.
\end{prop}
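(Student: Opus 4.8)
The plan is to verify the definition of reflexive directly: I need to show that the halfspace description of $\mathcal{C}_P$ can be written in the form $Ax \leq 1$ (equivalently $-1 \leq (\text{row})x \leq 1$) for an \emph{integral} matrix $A$, and then cite the earlier remark that a reflexive polytope is Gorenstein (with Gorenstein index $1$). The description in Definition \ref{def: signed chain} already presents $\mathcal{C}_P$ as an intersection of inequalities of exactly the shape $-1 \leq x_{c_1} + s_1 x_{c_2} + \dots + s_1 s_2 \cdots s_{m-1} x_{c_m} \leq 1$, so each row of the candidate matrix $A$ has entries equal to products of $\pm 1$'s, hence lies in $\{-1,0,1\}^n \subset \Z^n$. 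The integrality of $A$ is therefore immediate from the form of the defining inequalities.

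First I would observe that $\mathcal{C}_P$ is a bounded, full-dimensional lattice polytope containing the origin in its interior: boundedness and full-dimensionality follow because, for each single element $c \in [n]$, a length-one chain $(C,S)$ with $C=(c)$ (or the relevant short chains coming from loops/relations) forces $-1 \leq x_c \leq 1$, so $\mathcal{C}_P \subseteq [-1,1]^n$, and $\mathbf{0}$ satisfies every defining inequality strictly. Next I would point out that the collection of inequalities in Definition \ref{def: signed chain} is already symmetric under $x \mapsto -x$ (each inequality $-1 \leq \langle r, x\rangle \leq 1$ is manifestly so), which is consistent with $\mathbf{0}$ being the unique interior lattice point and with the reflexive structure. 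Then I would simply read off the matrix $A$ whose rows are the coefficient vectors $r = (1, s_1, s_1 s_2, \dots, s_1 \cdots s_{m-1})$ supported on the coordinates $c_1, \dots, c_m$; since every entry is a signed product of entries of $S \in \{-1,1\}^{m-1}$, we have $A \in \Z^{m \times n}$ and the description $-\mathbf{1} \leq Ax \leq \mathbf{1}$ is of the required reflexive form.

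Having exhibited an integral $A$ with $\mathcal{C}_P = \{x : Ax \leq \mathbf{1}\}$ (after folding the two-sided inequalities into one-sided ones by including both $r$ and $-r$ as rows), I would conclude that $\mathcal{C}_P$ is reflexive by the definition given just above the statement, and hence Gorenstein since a reflexive polytope is precisely a Gorenstein polytope of Gorenstein index $1$, as noted in the paragraph preceding the proposition. This completes the argument.

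The only genuine subtlety, and the step I expect to require the most care, is confirming that $\mathcal{C}_P$ really is a \emph{lattice} polytope rather than merely a rational polyhedron defined by an integral inequality system; one must check that every vertex of $\{x : Ax \leq \mathbf{1}\}$ is an integer point. For a reflexive polytope this is part of what must be established, and the cleanest route is to invoke that the dual $\mathcal{C}_P^\ast = \conv(\text{rows of } A)$ is itself a lattice polytope (its vertices are among the integral rows $r$), so that $\mathcal{C}_P = (\mathcal{C}_P^\ast)^\ast$ is reflexive by the standard duality characterization cited via \cite{Z12}; this sidesteps the need to compute the vertices of $\mathcal{C}_P$ explicitly.
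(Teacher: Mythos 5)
Your main line of argument is exactly the paper's: the published proof is a single sentence observing that Definition \ref{def: signed chain} already presents $\mathcal{C}_P$ as a system $Ax \leq \mathbf{1}$ with $A$ integral, which is verbatim the paper's working definition of reflexive, with Gorenstein-ness following from the remark preceding the proposition. Your reading off of the rows $(1, s_1, s_1s_2, \dots, s_1\cdots s_{m-1})$, the folding of the two-sided inequalities into one-sided ones, the boundedness observation via singleton chains, and the symmetry remark are all faithful elaborations of that one-liner, and to that extent your proposal matches the paper.

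The problem is your final paragraph. You correctly identify the one point the paper glosses over --- reflexivity as defined presupposes that $\mathcal{C}_P$ is a \emph{lattice} polytope, which the integral inequality system does not obviously give --- but your proposed fix is logically backwards. From the fact that $\mathcal{C}_P^\ast = \conv(\text{rows of } A)$ is a lattice polytope you cannot conclude that $\mathcal{C}_P = (\mathcal{C}_P^\ast)^\ast$ is lattice, let alone reflexive: the polar dual of a lattice polytope is in general only a rational polytope, and the duality criterion in \cite{Z12} requires \emph{both} $Q$ and $Q^\ast$ to be lattice polytopes before either is declared reflexive, so you are assuming the direction you need. Nor is integrality of $A$ with entries in $\{-1,0,1\}$ sufficient by itself: the system $|x_1+x_2|\leq 1$, $|x_2+x_3|\leq 1$, $|x_1+x_3|\leq 1$ in $\R^3$ has all rows in $\{-1,0,1\}^3$, yet the point $\left(\tfrac12,\tfrac12,\tfrac12\right)$ is a non-integral vertex. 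For an actual signed poset this configuration happens to be excluded by the chain structure: two-term inequalities of the shape $-1\leq x_1+x_2\leq 1$ and $-1\leq x_2+x_3\leq 1$ come from relations $\pm(e_1-e_2)$ and $\pm(e_2-e_3)$ in $P$, and (in the aligned case) transitivity puts $e_1-e_3$ in $P$ and produces the three-term chain inequality $-1\leq x_1+x_2+x_3\leq 1$, which cuts off $\left(\tfrac12,\tfrac12,\tfrac12\right)$. So lattice-ness of $\mathcal{C}_P$ is a genuine claim about the interaction of chain inequalities that needs its own argument (for instance, a proof that every vertex lies in $\{-1,0,1\}^n$); your duality maneuver does not supply it --- though, to be fair, the paper's one-line proof silently takes the same point for granted.
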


\begin{proof}
From the definition, we can see that $\mathcal{C}_P$ is defined by a linear
system $Ax \leq 1$ for an integral matrix~$A$. 
\end{proof}

One consequence of this is that it allows us to associate a Gorenstein polytope with every classical poset, since every classical poset can be viewed as a signed poset.

Similarly to Proposition \ref{prop: antichain}, we can give a convex hull description of $\mathcal{C}_P$ in terms of antichains of $P$, defined below.

\begin{defin}
Let $P$ be a signed poset on $[n]$.  An element of $a = (a_1 \dots a_n) \in  \{-1,0,1\}^n$ is an antichain  of $P$ if for each element $\alpha \in P$ of the form $\pm e_i \pm e_j$ or $\pm e_i \mp e_j$, $\langle \alpha,   a \rangle \neq 0$ unless $a_i = a_j = 0$.
\end{defin}
\begin{example} Figure \ref{fig: chain ex} shows a signed poset, with a chain indicated in blue.  This chain $(C,S) = ((1,2,3),(1,-1))$ is the longest chain in this signed poset.

There are many antichains of this signed poset, one of which is $a =(1,0,1,-1)$. It might seem like since $1$ and $3$ are related, they shouldn't both have a nonzero entry in the antichain, but the way the signs are arranged makes $a$ fit the definition. 
\end{example}
\begin{figure}[ht]
	\centering
        \includegraphics[width=.3\textwidth]{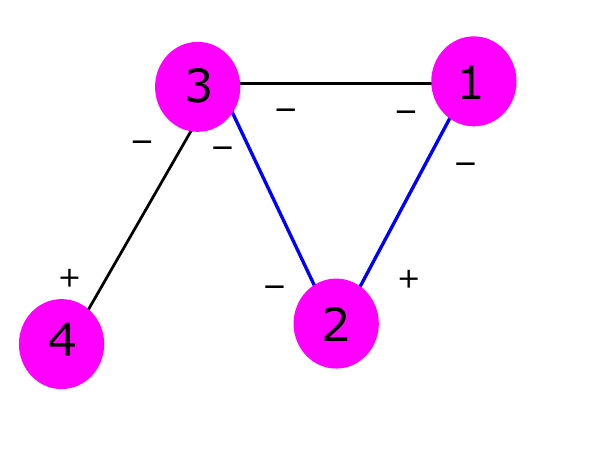}
        \caption{A bidirected graph representation of a signed poset on $4$ elements, with a chain highlighted in blue.  }
        \label{fig: chain ex}
        \end{figure}
\begin{prop}
The set of antichains of a signed poset $P$ are exactly the integer points in $\mathcal{C}_P$.  
\end{prop}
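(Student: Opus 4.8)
The claim is an equivalence between a combinatorial condition (being an antichain) and a geometric one (being an integer point of $\mathcal{C}_P$). Since the integer points of $\mathcal{C}_P$ live in $\{-1,0,1\}^n$ (the polytope sits inside $[-1,1]^n$ because each singleton chain $(C,S)=((i),())$ forces $-1\leq x_i\leq 1$), both sets are subsets of $\{-1,0,1\}^n$, and the plan is to show that a point $a\in\{-1,0,1\}^n$ satisfies all the defining chain inequalities of $\mathcal{C}_P$ if and only if it satisfies the antichain condition. I would prove the two directions separately.

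For the forward direction, I would show that a point $a\in\mathcal{C}_P\cap\Z^n$ is an antichain. Suppose for contradiction that the antichain condition fails: there is some $\alpha\in P$ of the form $\pm e_i\pm e_j$ with $\langle\alpha,a\rangle\neq 0$ but not both $a_i=a_j=0$. The key observation is that $\langle\alpha,a\rangle$ is an integer, and I would argue that under these hypotheses its absolute value is forced to be $2$: writing $\alpha=\pm e_i\pm e_j$, the quantity $\pm a_i\pm a_j$ with $a_i,a_j\in\{-1,0,1\}$ can only have absolute value $2$ if both coordinates are $\pm 1$ and the signs align. Then I would exhibit a length-two chain $(C,S)$ with $C=(i,j)$ and the sign $s_1$ chosen to match $\alpha$, so that the corresponding chain-polytope inequality reads $-1\leq a_i+s_1 a_j\leq 1$; since $a_i+s_1a_j=\pm\langle\alpha,a\rangle$ has absolute value $2$, this inequality is violated, contradicting $a\in\mathcal{C}_P$. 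The one subtlety to check is that a single such $\alpha$ already constitutes a (possibly length-one-relation) chain in the sense of Definition \ref{def: signed chain}, i.e.\ that the definition of a chain does not impose the two-relation closure condition when $m=2$; one reads off that for $m=2$ there is no index $i\in[m-1]$ with $\alpha_i+\alpha_{i+1}$ constraint beyond the existence of $\alpha_1$, so a single relation yields a valid chain.

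For the reverse direction, I would show that any antichain $a$ satisfies every chain inequality. Given a chain $(C,S)$ with $C=(c_1,\dots,c_m)$ and associated relations $\alpha_1,\dots,\alpha_{m-1}\in P$, I would analyze the telescoping sum $x_{c_1}+s_1x_{c_2}+\cdots+s_1\cdots s_{m-1}x_{c_m}$ evaluated at $a$. The heart of the argument is that consecutive relations $\alpha_i$ and $\alpha_{i+1}$ with $\alpha_i+\alpha_{i+1}\in P$ interact with the antichain condition to prevent two neighboring terms in the telescoping sum from both being nonzero in a way that accumulates. Concretely, I expect that the antichain condition applied to $\alpha_i+\alpha_{i+1}$ (again of the form $\pm e_{c_i}\pm e_{c_{i+2}}$) forces the relevant coordinates to vanish or cancel, so that at most one of the $m$ signed terms survives and the sum lies in $\{-1,0,1\}$. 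I would formalize this by an induction on $m$ or by directly showing that the signed partial sums stay bounded by $1$ in absolute value, using the sign-compatibility built into the definition of a chain.

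The main obstacle I anticipate is the reverse direction, specifically controlling the telescoping sum: one must carefully use both the per-relation antichain constraint \emph{and} the closure condition $\alpha_i+\alpha_{i+1}\in P$ to rule out the accumulation of nonzero signed terms, and tracking the sign bookkeeping through the products $s_1\cdots s_{i-1}$ is where errors are likely to creep in. I would therefore pin down precisely how the sign $s_i$ determines the form of $\alpha_i$ (via the bullet points in Definition \ref{def: signed chain}) and verify in a small running example, such as the one in Figure \ref{fig: chain ex}, that the antichain $a=(1,0,1,-1)$ indeed satisfies the inequality coming from the highlighted chain before committing to the general inductive step.
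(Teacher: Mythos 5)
Your overall skeleton (singleton chains force $\mathcal{C}_P\cap\Z^n\subseteq\{-1,0,1\}^n$, length-two chains for one direction, telescoping sums plus transitivity for the other) matches the paper's proof, but both directions as written contain genuine errors. In the forward direction you negate the antichain condition incorrectly: the definition reads ``$\langle\alpha,a\rangle\neq 0$ unless $a_i=a_j=0$,'' so its \emph{failure} means $\langle\alpha,a\rangle=0$ with $(a_i,a_j)\neq(0,0)$, not $\langle\alpha,a\rangle\neq 0$ as you assume. Compounding this, your identity $a_i+s_1a_j=\pm\langle\alpha,a\rangle$ is false: by Definition \ref{def: signed chain}, $s_1=1$ pairs with $\alpha=\pm(e_i-e_j)$ and $s_1=-1$ with $\alpha=\pm(e_i+e_j)$, i.e.\ $\alpha=\pm(e_i-s_1e_j)$, so the chain functional $a_i+s_1a_j$ is the inner product with the \emph{sign-conjugate} root, not with $\alpha$ itself. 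Under your hypotheses (say $\alpha=e_i+e_j$, $a_i=1$, $a_j=0$) nothing has absolute value $2$ and no inequality is violated --- indeed that configuration occurs for genuine antichains. The correct mechanism, which is what the paper checks case by case, is the conjugacy: for $\alpha=\pm(e_i-se_j)$ one has $a_i-sa_j=0$ with $(a_i,a_j)\neq(0,0)$ exactly when $|a_i+sa_j|=2$, so a failed antichain condition violates the corresponding length-two chain inequality, and conversely.

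In the reverse direction your key claim --- ``at most one of the $m$ signed terms survives'' --- is false, and your own proposed sanity check refutes it: for $a=(1,0,1,-1)$ and the chain $((1,2,3),(1,-1))$ of Figure \ref{fig: chain ex}, the signed terms are $1,0,-1$, so two survive (the sum is $0$, which is harmless, but the invariant is wrong). The correct statement is that no two surviving terms can carry the \emph{same} sign, hence at most two survive and the sum lies in $[-1,1]$; proving this requires long-range transitivity, which your plan (consecutive closure $\alpha_k+\alpha_{k+1}\in P$ plus an unspecified induction) does not yet supply. Writing $\alpha_k=\varepsilon_k(e_{c_k}-s_ke_{c_{k+1}})$, the condition $\alpha_k+\alpha_{k+1}\in P$ forces $\varepsilon_{k+1}=\varepsilon_ks_k$, so the telescoping sum $\alpha_i+\cdots+\alpha_{j-1}=\varepsilon_i\bigl(e_{c_i}-(s_i\cdots s_{j-1})e_{c_j}\bigr)$ is a root lying in $P$ by $\PLC{P}=P$, for \emph{every} pair $i<j$, not just consecutive ones. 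This is exactly the step the paper uses, contrapositively: a violated chain inequality yields indices $i<j$ with $(s_1\cdots s_{i-1})a_{c_i}=(s_1\cdots s_{j-1})a_{c_j}=\pm1$ of equal sign, and pairing $a$ against the composite root (of the form $\pm(e_{c_i}-e_{c_j})$ or $\pm(e_{c_i}+e_{c_j})$ according to whether the two sign prefixes agree) gives inner product $0$ with nonzero coordinates, contradicting the antichain condition. Once you install this composite-root lemma and repair the sign logic in the forward direction, your argument coincides with the paper's.
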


\begin{proof}

Suppose a point $p \in \R^n$ is an integer point of $\mathcal{C}(P)$.  Then $p$ must satisfy all the inequalities specified in Definition \ref{def: signed chain}; in particular:
\begin{itemize}
\item Suppose $\alpha \in P$ is of the form $\pm e_i \pm e_j$.  Then, $(C,S)= (\{i,j\},\{-1\})$ is a signed chain of $P$, and $p$ must satisfy the inequalities $-1\leq p_i - p_j \leq 1$.  Thus, either $p_i = p_j = 0$, $p_i = \pm 1$ and $p_j = 0$, $p_i= 0$  and $p_j = \pm 1$, or $p_i =p_j = \pm 1$.  In the latter three cases, it is true that $\langle p, \alpha\rangle \neq 0$.  
\item Suppose  $\alpha \in P$ is of the form $\pm e_i \mp e_j$.  Then, $(C,S)= (\{i,j\},\{1\})$ is a signed chain of $P$, and $p$ must satisfy the inequalities $-1\leq p_i +p_j \leq 1$.  Thus, either $p_i = p_j = 0$, $p_i = \pm 1$ and $p_j = 0$, $p_i= 0$  and $p_j = \pm 1$, or $p_i = - p_j = \pm 1$.  In the latter three cases, it is true that $\langle p, \alpha\rangle \neq 0$.  
\end{itemize} 
Thus $p$ satisfies all the properties of being an antichain of $P$.

Suppose $p$ is not an integer point of $\mathcal{C}(P)$.  Then there must be some chain 
\newline
$(\{c_1, \dots, c_m\}, \{s_1, \dots,s_{m-1}\})$ such that 
\[ p_{c_1} + s_1p_{c_2} + s_1s_2p_{c_3} + \dots + s_1s_2\dots s_{m-1}p_{c_m} \leq -2\]
\noindent or
\[ 2\leq p_{c_1} + s_1p_{c_2} + s_1s_2p_{c_3} + \dots + s_1s_2\dots s_{m-1}p_{c_m}. \]

This implies that for some $i,j \in [m]$, $2 \leq s_1\dots s_i p_i + s_1\dots s_jp_j$ or $ s_1\dots s_i p_i + s_1\dots s_jp_j \leq -2$.  From this, we can determine that $s_1\dots s_ip_i = s_1\dots s_jp_j = \pm 1$.  We have the following two cases:
\begin{itemize}
\item If $s_1\dots s_j = s_1\dots s_j$, then $p_i = p_j$. We also know that either $e_i - e_j \in P$ or $-e_i + e_j \in P$ from the transitivity of signed posets and the definition of signed chains.  Since  $\langle p, \pm e_i \mp e_j\rangle = 0$, we deduce that $p$ cannot be an antichain of $P$.
\item  If $s_1\dots s_j =- s_1\dots s_j$, then $p_i = - p_j$. We also know that
either $e_i + e_j \in P$ or $-e_i - e_j \in P$ from the transitivity of signed
posets and the definition of signed chains.  Since  $\langle p, \pm e_i
\pm e_j\rangle = 0$, we deduce that $p$ cannot be an antichain of $P$. \qedhere
\end{itemize}
\end{proof}

We note that there is no nice analogue for Theorem \ref{thm: order and chain}, since generally $\mathcal{C}_P$ and $\mathcal{O}_P$ are neither combinatorially equivalent nor Ehrhart equivalent.  One example of the latter is the example in which $P$ contains an element of the form $\pm e_i$.  Observe that $\mathcal{O}_P$ has no interior  lattice points, since the defining inequality $\pm x_i\geq 0$  prevents the origin from being an interior point and there are no other posibilities for an interior point of a polytope that is a subset of $[-1,1]^n$.  From the definition, we can see that $\mathcal{C}_P$ always has the origin as in interior point.  Thus in this case, these two polytopes cannot have the same Ehrhart polynomial.  
\bibliographystyle{amsplain}
\providecommand{\bysame}{\leavevmode\hbox to3em{\hrulefill}\thinspace}
\providecommand{\MR}{\relax\ifhmode\unskip\space\fi MR }
\providecommand{\MRhref}[2]{%
  \href{http://www.ams.org/mathscinet-getitem?mr=#1}{#2}
}
\providecommand{\href}[2]{#2}

\setlength{\parskip}{0cm} 

\end{document}